\definecolor{vertfonce}{rgb}{0.20, 0.46, 0.25}
\definecolor{rougefonce}{rgb}{0.64, 0.09, 0.20}
\author{L\' eo Morin}
\title[Birkhoff normal form for constant-rank magnetic fields]{A Semiclassical Birkhoff Normal Form for constant-rank magnetic fields}
\address{L\' eo Morin, Univ Rennes, CNRS, IRMAR - UMR 6625, F-35000 Rennes, France}
\keywords{magnetic Laplacian, normal forms, spectral theory, semiclassical limit, pseudo differential operators, microlocal analysis, symplectic geometry}
\email{leo.morin@univ-rennes1.fr}
\titleformat{\section}{\centering\Large\bfseries}{\thesection \ --}{0.7em}{\Large\bfseries #1}
\titleformat{\subsection}{\centering\large\bfseries}{\thesubsection \ --}{0.4em}{\large\bfseries #1}
\titleformat{\subsubsection}{\centering\bfseries}{\thesubsubsection \ --}{0.4em}{\bfseries #1}
\numberwithin{equation}{section}
\newtheorem*{theorem*}{\sffamily Theorem}
\newtheorem{theorem}{\sffamily Theorem}[section]
\newtheorem{lemma}[theorem]{\sffamily Lemma}
\newtheorem{corollary}[theorem]{\sffamily Corollary}
\newtheorem{assumption}{\sffamily Assumption}
\newtheorem{remark}[theorem]{\sffamily Remark}
\newcommand{\R}{\mathbf{R}}
\newcommand{\N}{\mathbf{N}}
\newcommand{\Z}{\mathbf{Z}}
\newcommand{\dd}{\mathrm{d}}
\newcommand{\grandO}{\mathcal{O}}
\newcommand{\Ld}{\mathsf{L}^2}
\newcommand{\Cinf}{\mathcal{C}^{\infty}}
\newcommand{\spectrum}{\mathsf{sp}}
\newcommand{\Vh}{V_{\hbar}}
\newcommand{\UhII}{U_{\mathbf{2},\hbar}}
\newcommand{\VhII}{V_{\mathbf{2},\hbar}}
\newcommand{\RhII}{\mathcal{R}_{\mathbf{2},\hbar}}
\newcommand{\A}{\mathbf{A}}
\newcommand{\B}{\mathbf{B}}
\newcommand{\ub}{\mathbf{u}}
\newcommand{\vb}{\mathbf{v}}
\newcommand{\wb}{\mathbf{w}}
\newcommand{\gb}{\mathbf{g}}
\newcommand{\fb}{\mathbf{f}}
\newcommand{\un}{\mathbf{1}}
\newcommand{\dbar}{\overline{\partial}}
\newcommand{\Ih}{\mathcal{I}_{\hbar}}
\newcommand{\Jh}{\mathcal{J}_{\hbar}}
\newcommand{\Lh}{\mathcal{L}_{\hbar}}
\newcommand{\Qh}{\mathcal{Q}_{\hbar}}
\newcommand{\Rh}{\mathcal{R}_{\hbar}}
\newcommand{\Nh}{\mathcal{N}_{\hbar}}
\newcommand{\Mh}{\mathcal{M}_{\hbar}}
\newcommand{\Uh}{\mathsf{U}_{\hbar}}
\newcommand{\taut}{\tilde{\tau}}
\newcommand{\Dom}{\mathsf{Dom}}
\newcommand{\run}{r_{\mathbf{1}}}
\newcommand{\rdeux}{r_{\mathbf{2}}}
\newcommand{\fsUn}{f^{\star}_{\un}}
\newcommand{\vect}{\mathsf{span}}
\newcommand{\ad}{\mathsf{ad}}
\newcommand{\Op}{\mathsf{Op}^w_{\hbar}}
\newcommand{\Oph}{\mathsf{Op}^w_h}
\newcommand{\Opt}{\mathsf{Op}^w_{\sharp}}
\begin{document}

\begin{abstract}
We consider the semiclassical magnetic Laplacian $\Lh$ on a Riemannian manifold, with a constant-rank and non-vanishing magnetic field $B$. Under the localization assumption that $B$ admits a unique and non-degenerate well, we construct a Birkhoff normal forms to describe the spectrum of $\Lh$ in the semiclassical limit $\hbar \rightarrow 0$. We deduce an expansion of the eigenvalues of order $\hbar$, in powers of $\hbar^{1/2}$.
\end{abstract}

\maketitle

\section{Introduction}

\subsection{Context}

We consider the semiclassical magnetic Laplacian with Dirichlet boundary conditions
\[ \Lh = (i\hbar \dd + A)^*(i\hbar \dd + A) \]
on a $d$-dimensional oriented Riemannian manifold $(M,g)$, which is either compact with boundary, or the Euclidean $\R^d$. $A$ denotes a smooth $1$-form on $M$, the magnetic potential. The magnetic field is the $2$-form $B= \dd A$. 

\medskip

The spectral theory of the magnetic Laplacian has given rise to many investigations, and appeared to have very various behaviours according to the variations of $B$ and the geometry of $M$. We refer to the books and review \cite{HelKor14, FournaisHelffer,Raymond} for a description of these works. Here we focus on the Dirichlet realisation of $\Lh$, and we give a description of semi-excited states, eigenvalues of order $\grandO(\hbar)$, in the semiclassical limit $\hbar \rightarrow 0$. As explained in the above references, the \textit{magnetic intensity} has a great influence on these eigenvalues, and one can define it in the following way.

\medskip

Using the isomorphism $T_qM \simeq T_qM^*$ given by the metric, one can define the following skew-symmetric operator $\B(q) : T_qM \rightarrow T_qM$, by:
\begin{equation}\label{defBmatrix}
 B_q(X,Y) = g_q(X, \B(q)Y), \quad \forall X,Y \in T_qM, \quad \forall q \in M\,.
 \end{equation}
The operator $\B(q)$ being skew-symmetric with respect to the scalar product $g_q$, its eigenvalues are purely imaginary and symmetric with respect to the real axis. We denote these repeated eigenvalues by
\[ \pm i \beta_j(q), \cdots, \pm i \beta_s(q), 0 \,, \]
with $\beta_j(q) > 0$. In particular, the rank of $\B(q)$ is $2s$ and may depend on $q$. However, we will focus on the constant-rank case. We denote by $k$ the dimension of the Kernel of $\B(q)$, so that $d= 2s+k$. The magnetic intensity (or "Trace+") is the following scalar-valued function,
\[ b(q) = \sum_{j=1}^s \beta_j(q) \,. \]
The function $b$ is continuous on $M$, but non-smooth in general. We are interested in discrete magnetic wells and non-vanishing magnetic fields. 

\begin{assumption}\label{assump.loc}
The magnetic intensity is non-vanishing, and admits a unique global minimum $b_0 >0$ at $q_0 \in M \setminus \partial M$, which is non-degenerate. Moreover, in the non-compact case $M= \R^d$ we assume that
\[ b_\infty := \liminf_{\vert q \vert \rightarrow + \infty} b(q) > b_0 \]
and the existence of $C >0$ such that
\[ \vert \partial_{\ell} \B_{ij}(q) \vert \leq C(1+ \vert \B(q) \vert)\,, \quad \forall \ell, i, j \quad \forall q \in \R^d \,. \]
\end{assumption}

\begin{assumption}\label{assump.constant.rank}
The rank of $\B(q)$ is constant equal to $2s >0$ on a neighborhood $\Omega$ of $q_0$.
\end{assumption}

Under Assumption \ref{assump.loc}, the following useful inequality was proven in \cite{HelMo96}. There is a $C_0 >0$ such that, for $\hbar$ small enough,
\begin{equation}\label{eq.minoration.par.b}
(1+ \hbar^{1/4}C_0) \langle \Lh u, u \rangle \geq \int_M \hbar(b(q)-\hbar^{1/4}C_0) \vert u(q) \vert^2 \dd q \,,\quad  \forall u \in \Dom (\Lh)\,.
\end{equation}

\begin{remark}
Actually, one has the better inequality obtained replacing $\hbar^{1/4}$ by $\hbar$. This was proved by Guillemin-Uribe \cite{GuiUr88} in the case of a non-degenerate $B$, by Borthwick-Uribe \cite{BorUr96} in the constant rank case, and by Ma-Marinescu \cite{MaMa02} in a more general setting.
\end{remark}

\begin{remark} Using this inequality, one can prove Agmon-like estimates for the eigenfunctions of $\Lh$. Namely, the eigenfunctions associated to an eigenvalue $< b_1 \hbar$ are exponentially small outside $K_{b_1} = \lbrace q\,, b(q) \leq b_1 \rbrace$. We will use this result to reduce our analysis to the neighborhood $\Omega$ of $q_0$. In particular, the greater $b_1$ is, the larger $\Omega$ must be.
\end{remark}

Finally, we will assume the following, in order to get smooth functions $\beta_j$ on $\Omega$.

\begin{assumption}\label{assump.betaj.simples}
$\beta_i(q_0) \neq \beta_j(q_0)$ for every $1 \leq i < j \leq s$.
\end{assumption}

Under Assumptions \ref{assump.loc} and \ref{assump.constant.rank}, estimates on the ground states of $\Lh$ in the semiclassical limit $\hbar \rightarrow 0$ were proven in several works, especially in dimension $d=2,3$.

\medskip

On $M=\R^2$, asymptotics for the $j$-th eigenvalue of $\Lh$ 
\begin{equation}\label{eq.asymp.lambdaj.2D}
\lambda_j(\Lh) = b_0 \hbar + (\alpha (2j-1) + c_1) \hbar^2 + o(\hbar^2)
\end{equation}
with explicit $\alpha, c_1 \in \R$ were proven by Helffer-Morame \cite{HelMo01} (for $j=1$) and Helffer-Kordyukov \cite{HelKor11} ($j \geq 1$). Actually, this second paper contains a description of some higher eigenvalues. They proved that, for any integers $n$, $j \in \N$, there exist $\hbar_{jn} >0$ and for $\hbar \in (0,\hbar_{jn})$ an eigenvalue $\lambda_{n,j}(\hbar) \in \mathsf{sp} ( \Lh)$ such that
\[ \lambda_{n,j}(\hbar) = (2n-1) (b_0 \hbar + ((2j-1)\alpha + c_n)\hbar^2) + o(\hbar^2) \,, \]
for another explicit constant $c_n$. In particular, it gives a description of \textit{some} semi-excited states (of order $(2n-1)b_0 \hbar$). Finally, Raymond-V\~u Ng\d{o}c \cite{Birkhoff2D} (and Helffer-Kordyukov \cite{HelKor14-2}) got a description of the whole spectrum below $b_1 \hbar$, for any fixed $b_1 \in (b_0,b_\infty)$. More precisely, they proved that this part of the spectrum is given by a familly of effective operators $\Nh^{[n]}$ ($n\in \N$) modulo $\grandO(\hbar^\infty)$. These effective operators are $\hbar$-pseudodifferential operators with principal symbol given by the function $\hbar (2n-1)b$. More interestingly, they explained why the two quantum oscillators
\[ (2n-1)b_0 \hbar\,, \quad \text{and} \quad (2j-1)\alpha \hbar^2 \,, \]
appearing in the eigenvalue asymptotics correspond to two oscillatory motions in classical dynamics : The cyclotron motion, and a rotation arround the minimum point of $b$. The results of Raymond-V\~u Ng\d{o}c were generalized to an arbitrary $d$-dimensional Riemannian manifold in \cite{MagBNF}, under the assumption $k=0$ ($\B(q)$ has full rank), proving in particular similar estimates \eqref{eq.asymp.lambdaj.2D} in a general setting. Actually, these eigenvalue estimates were proven simultaneously by Kordyukov \cite{Kor19} in the context of the Bochner Laplacian.

\medskip

In this paper we are interested on the influence of the kernel of $\B$ ($k>0$). The rank of $\B$ being even, this kernel always exists in odd dimensions : if $d=3$ the kernel directions correspond to the usual field lines. On $M=\R^3$, Helffer-Kordyukov \cite{HeKo3D} proved the existence of $\lambda_{nmj}(\hbar) \in \mathsf{sp}(\Lh)$ such that,
\begin{align*}
\lambda_{nmj}(\hbar) = & (2n-1)b_0 \hbar + (2n-1)^{1/2}(2m-1)\nu_0 \hbar^{3/2} \\
&+ ((2n-1)(2j-1)\alpha + c_{nm}) \hbar^2 + \grandO(\hbar^{9/4})\,,
\end{align*}
for some $\nu_0 >0$ and $\alpha$, $c_{nm} \in \R$. Motivated by this result and the 2D case, Helffer-Kordyukov-Raymond-V\~u Ng\d{o}c \cite{Birkhoff3D} gave a description of the whole spectrum below $b_1 \hbar$, proving in particular the eigenvalue estimates
\begin{equation}
\lambda_j(\Lh)= b_0 \hbar + \nu_0 \hbar^{3/2} + \alpha (2j-1) \hbar^2 + \grandO(\hbar^{5/2})\,.
\end{equation}
Their results exhibit a new classical oscillatory motion in the directions of the field lines, corresponding to the quantum oscillator $(2m-1)\nu_0 \hbar^{3/2}$.

\medskip

The aim of this paper is to generalize the results of \cite{Birkhoff3D} to an arbitrary Riemannian manifold $M$, under the assumptions \ref{assump.loc} and \ref{assump.constant.rank}. In particular we describe the influence of the kernel of $\B$ in a general geometric and dimensional setting. Their approach, which we adapt, is based on a \textit{semiclassical Birkhoff normal form}. The \textit{classical} Birkhoff normal form has a long story in physics, and goes back to Delaunay \cite{Delaunay} and Lindstedt \cite{Lindstedt}. This formal normal form was the starting point of a lot of studies on stability near equilibrium, and KAM theory (after Kolmogorov \cite{Kolmogorov}, Arnold \cite{Arnold}, Moser \cite{Mos62}). The works of Birkhoff \cite{Birkhoff} and Gustavson \cite{Gustavson} gave its name to this normal form. We refer to the books \cite{Mos68} and \cite{HZ94} for precise statements. Our approach here relies on a quantization. Physicists and quantum chemists already noticed in the 1980' that a quantum analogue of the Birkhoff normal form could be used to compute energies of molecules (\cite{Chemist01},\cite{Chemist02},\cite{Chemist03},\cite{Chemist04}). Joyeux and Sugny also used such techniques to describe the dynamics of excited states (see \cite{Joyeux} for example). In \cite{Sjo92}, Sjöstrand constructed a semi-classical Birkhoff normal form for a Schrödinger operator $-\hbar^2 \Delta + V$, using the Weyl quantization, to make a mathematical study of semi-excited states. In their paper \cite{Birkhoff2D}, Raymond and V\~u Ng\d{o}c had the idea to adapt this method for $\Lh$ on $\R^2$, and with Helffer and Kordyukov on $\R^3$ \cite{Birkhoff3D}. This method is reminiscent of Ivrii's approach (in his book \cite{Ivrii}).

\subsection{Main results}

The first idea is to link the classical dynamics of a particle in the magnetic field $B$ with the spectrum of $\Lh$ using pseudodifferential calculus. Indeed, $\Lh$ is a $\hbar$-pseudodifferential operator with symbol
\[H(q,p) = \vert p - A_q \vert^2 + \grandO(\hbar^2) \,, \quad \forall p \in T_qM^*\,, \forall q \in M \,,\]
and $H$ is the classical Hamiltonian associated to the magnetic field $B$. One can use this property to prove that, in the phase space $T^*M$, the eigenfunctions (with eigenvalue $< b_1 \hbar$) are microlocalized on an arbitrarily small neighborhood of
\[ \Sigma = H^{-1}(0) \cap T^*\Omega = \lbrace (q,p) \in T^* \Omega, \quad p = A_q \rbrace \,. \]
Hence, the second main idea is to find a normal form for $H$ on a neighborhood of $\Sigma$. Namely, we find canonical coordinates near $\Sigma$ in which $H$ has a "simple" form. The symplectic structure of $\Sigma$, as submanifold of $T^*M$ is thus of great interest. One can see that the restriction of the canonical symplectic form $\dd p \wedge \dd q$ on $T^*M$ to $\Sigma$ is given by $B$ (Lemma \ref{lem.Sigma.B}); and when $B$ has constant-rank, one can find Darboux coordinates $\varphi : \Omega' \subset \R^{2s+k}_{(y,\eta,t)} \rightarrow \Omega$ such that
\[ \varphi^* B = \dd \eta \wedge \dd y \,,\]
up to reducing $\Omega$. We will start from these coordinates to get the following normal form for $H$.

\begin{theorem}\label{thm.H.rond.Phi}
Under Assumptions \ref{assump.loc}, \ref{assump.constant.rank}, and \ref{assump.betaj.simples}, there exists a diffeomorphism \[ \Phi_{\mathbf{1}} : U_{\mathbf{1}}' \subset \R^{4s+2k} \rightarrow U_{\mathbf{1}} \subset T^*M \]
between neighborhoods $U_{\mathbf{1}}'$ of $0$ and $U_{\mathbf{1}}$ of $\Sigma$ such that
\[ \widehat{H}(x,\xi,y,\eta,t,\tau) := H \circ \Phi_{\mathbf{1}} (x,\xi, y,\eta,t,\tau) \]
satisfies (with the notation $\widehat{\beta}_j = \beta_j \circ \varphi$),
\[\widehat H = \langle M(y,\eta,t) \tau, \tau \rangle + \sum_{j=1}^s \widehat \beta_j(y,\eta,t) \left( \xi_j^2 + x_j^2 \right) + \grandO((x,\xi,\tau)^3) \,, \]
uniformly with respect to $(y,\eta,t)$, for some $(y,\eta,t)$-dependant positive definite matrix $M(y,\eta,t)$. Moreover, 
\[ \Phi_{\mathbf{1}}^* (\dd p \wedge \dd q) = \dd \xi \wedge \dd x + \dd \eta \wedge \dd y + \dd \tau \wedge \dd t\,. \]
\end{theorem}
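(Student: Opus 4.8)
The plan is to build $\Phi_{\mathbf 1}$ in two geometric steps — first a symplectic normalization of a neighborhood of $\Sigma$, of Weinstein (constant-rank submanifold) type, then a fiberwise Williamson reduction of the transverse Hessian of $H$ — and to absorb the tension between "symplectic'' and "exactly quadratic'' into the $\grandO((x,\xi,\tau)^3)$ remainder. To set up, I lift $\varphi$ to a symplectomorphism $T^*\Omega'\to T^*\Omega$, which reduces us to $M=\Omega'\subset\R^{2s+k}_{(y,\eta,t)}$ with magnetic field $\dd\eta\wedge\dd y$; choosing a gauge $A$ with $\dd A=\dd\eta\wedge\dd y$, we have $H(w,p)=\|p-A_w\|^2_{g^*_w}$ on $T^*\Omega'$, with $g^*$ the pulled-back cometric and $w=(y,\eta,t)$. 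Then $\Sigma=\lbrace p=A_w\rbrace$ is the graph of $A$, diffeomorphic to $\Omega'$, transverse to the cotangent fibers; by Lemma \ref{lem.Sigma.B} the canonical form restricts on it to $\dd\eta\wedge\dd y$, of constant rank $2s$ with kernel spanned by the $\partial_{t_\ell}$. Since $H$ is fiberwise a positive-definite quadratic form vanishing exactly on $\Sigma$, $\dd H$ vanishes along $\Sigma$ and the Hessian $q_w$ of $H$ in the directions transverse to $\Sigma$ is, at each point of $\Sigma$, the positive-definite form $2\|\cdot\|^2_{g^*_w}$ on the fiber $T^*_wM$.

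Next I would apply a relative Darboux / Weinstein neighborhood theorem adapted to the constant-rank submanifold $\Sigma$: a neighborhood of $\Sigma$ in $(T^*M,\dd p\wedge\dd q)$ is determined, up to symplectomorphism fixing $\Sigma$, by the presymplectic manifold $(\Sigma,\dd\eta\wedge\dd y)$ together with the symplectic normal bundle of $\Sigma$ (a rank-$2s$ symplectic vector bundle, the symplectic orthogonal of $T\Sigma$ modulo $\ker(\dd\eta\wedge\dd y)$). After shrinking $\Omega'$ to a ball this bundle is symplectically trivial and the presymplectic model is $(\R^{2s+k},\dd\eta\wedge\dd y)$, so there is a diffeomorphism onto a neighborhood of $\lbrace x=\xi=\tau=0\rbrace$ in $\R^{4s+2k}$ pulling $\dd p\wedge\dd q$ back to $\omega_0:=\dd\xi\wedge\dd x+\dd\eta\wedge\dd y+\dd\tau\wedge\dd t$ and $\Sigma$ onto $\lbrace x=\xi=\tau=0\rbrace$, with $(x,\xi)$ linear fiber coordinates of the symplectic normal bundle. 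In these coordinates $\widetilde H:=H\circ(\text{this map})$ is nonnegative and vanishes on $\lbrace x=\xi=\tau=0\rbrace$, hence $\widetilde H=q_w(x,\xi,\tau)+\grandO((x,\xi,\tau)^3)$ with $q_w$ the transverse Hessian above, depending smoothly on $w=(y,\eta,t)$.

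It remains to reduce $q_w$. Restricted to the symplectic-normal directions $(x,\xi)$, Williamson's theorem normalizes $q_w$ to $\sum_{j=1}^s\widehat\beta_j(w)(x_j^2+\xi_j^2)$, where the symplectic eigenvalues $\widehat\beta_j(w)$ are — by $\dd p\wedge\dd q|_\Sigma=B$ (Lemma \ref{lem.Sigma.B}) and the definition \eqref{defBmatrix} of $\B$ — exactly the moduli $\beta_j\circ\varphi$ of the nonzero eigenvalues of $\B$; taking the $q_w$-orthogonal complement of $(x,\xi)$ for the $\tau$-directions removes the cross terms and leaves $q_w=\sum_j\widehat\beta_j(w)(x_j^2+\xi_j^2)+\langle M(w)\tau,\tau\rangle$ with $M(w)=M(w)^{T}>0$ (a Schur complement of the positive-definite $q_w$). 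Assumption \ref{assump.betaj.simples} (spectral separation of the $\beta_j$ near $q_0$) makes the $\widehat\beta_j$ and this whole reduction smooth in $w$. The reduction is realized as the time-one flow of a $w$-dependent Hamiltonian quadratic in $(x,\xi,\tau)$: such a flow fixes $\Sigma$, preserves $\omega_0$, and moves the base variables $(y,\eta)$ only by $\grandO((x,\xi,\tau)^2)$ (and $t$ by $\grandO(x,\xi,\tau)$, harmlessly). Composing it with the previous maps thus gives $\widetilde H=\sum_j\widehat\beta_j(y,\eta,t)(x_j^2+\xi_j^2)+\langle M(y,\eta,t)\tau,\tau\rangle+\grandO((x,\xi,\tau)^3)$ while still pulling $\dd p\wedge\dd q$ back to $\omega_0$; composing with the cotangent lift of $\varphi$ and shrinking to make the remainder uniform in $(y,\eta,t)$ yields $\Phi_{\mathbf 1}$, and positivity of $M$ is inherited from that of $g^*$.

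The step I expect to be the main obstacle is the Weinstein normalization itself: establishing the constant-rank neighborhood theorem in a form that produces $\omega_0$ \emph{exactly} (trivializing the symplectic normal bundle and running a Moser deformation relative to $\Sigma$), and then checking that the transverse linear reduction of $q_w$ is genuinely realizable by a symplectomorphism fixing $\Sigma$ — in particular that the $(x,\xi)$–$\tau$ cross terms can be killed symplectically and that the resulting $\widehat\beta_j$, $M$, and the reducing transformation remain smooth in $w$, which is exactly where Assumption \ref{assump.betaj.simples} enters.
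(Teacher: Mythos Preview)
Your outline is correct and would yield the theorem, but it is organized differently from the paper's proof. You run two independent normalizations in series: first an abstract Weinstein/constant-rank neighborhood theorem to get $\omega=\omega_0$ and $\Sigma=\lbrace x=\xi=\tau=0\rbrace$, then a fiberwise Williamson reduction (plus a cross-term elimination) to normalize the transverse Hessian. The paper instead does both at once: at each point $j(q)\in\Sigma$ it builds an explicit frame $T_{j(q)}(\R^{2d})=E\oplus K\oplus F\oplus L$, where $F$ is spanned by vectors $\mathbf f_j,\mathbf f_j'$ constructed from the eigenvectors $\mathbf u_j,\mathbf v_j$ of $\B(q)$ with the normalization $1/\sqrt{\beta_j}$, and $L$ is a Lagrangian complement of $K=\ker(\pi^*B)$ in $(E\oplus F)^\perp$. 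The map $\Phi$ is written down directly from this frame; one checks $\Phi^*\omega=\omega_0$ along $\Sigma$ by evaluating $\omega$ on pairs of frame vectors, and then the Darboux--Weinstein lemma upgrades this to a neighborhood. The payoff is that the Hessian of $H$ is then computed \emph{directly} on the same frame: one finds $\tfrac12\nabla^2_{j(q)}H(\mathbf f_i,\mathbf f_j)=\beta_i\delta_{ij}$, $\tfrac12\nabla^2_{j(q)}H(\mathbf f_i',\mathbf f_j')=\beta_i\delta_{ij}$, $\nabla^2_{j(q)}H(\mathbf f_i,\mathbf f_j')=0$, and the $(x,\xi)$--$\tau$ cross terms $\nabla^2_{j(q)}H(\mathbf g_i,\mathbf f_j)$ vanish automatically because $L\subset F^\perp=(T\Sigma^\perp)^{\perp H}$ (the $\nabla^2H$-orthogonal of $T\Sigma^\perp$). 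No Williamson step and no cross-term elimination are needed.

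The comparison also shows where your sketch is thinnest. Your identification of the Williamson eigenvalues with the $\beta_j$ (``by $\omega|_\Sigma=B$ and \eqref{defBmatrix}'') is correct but not self-contained: what is actually required is the computation that for $(Q,P)\in T\Sigma^\perp$ one has $\tfrac12\nabla^2_{j(q)}H\big((Q,P),(Q',P')\big)=B(\B Q,Q')=g(\B Q,\B Q')$, after which plugging in eigenvectors of $\B$ yields the $\beta_j$ immediately. This is exactly the calculation the paper performs, and your route would need it too --- calling it ``Williamson'' defers the computation rather than replaces it. Likewise your symplectic elimination of the $(x,\xi)$--$\tau$ cross terms via a $w$-dependent quadratic Hamiltonian flow is correct (and you rightly track the $\grandO((x,\xi,\tau)^2)$ drift in $(y,\eta)$ and the $\grandO(x,\xi,\tau)$ drift in $t$), but the paper sidesteps this entirely through its choice of $L$. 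Both approaches are valid; the paper's is more explicit and self-contained, while yours is more modular and makes clearer which pieces are general symplectic geometry and which are specific to $H$.
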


\begin{remark}
We will use the following notation for our canonical coordinates: 
\[ z=(x,\xi) \in \R^{2s}\,, \quad w= (y,\eta) \in \R^{2s} \,, \quad \tau=(t,\tau) \in \R^{2k} \,.\]
This theorem gives the Tayor expansion of $H$ on a neighborhood of $\Sigma$. In particular $(x,\xi,\tau) \in \R^d$ measures the distance to $\Sigma$ whereas $(y,\eta,t) \in \R^d$ are canonical coordinates on $\Sigma$.
\end{remark}

\begin{remark}
This theorem exhibits the harmonic oscillator $\xi_j^2 + x_j^2$ in the first-order expansion of $H$. This oscillator, which is due to the non-vanishing magnetic field, corresponds to the well-known cyclotron motion. 
\end{remark}

Actually, one can use the \textit{Birkhoff normal form} algorithm to improve the remainder. Using this algorithm, we can change the $\grandO((x,\xi)^3)$ remainder into an explicit function of $\xi_j^2 + x_j^2$, plus some smaller remainders $\grandO((x,\xi)^r)$. This remainder power $r$ is restricted by resonances between the coefficients $\beta_j$. Thus, we take an integer $\run \in \N$ such that
\[ \forall \alpha \in \Z^s, \quad 0 < \vert \alpha \vert < \run \Rightarrow \sum_{j=1}^s \alpha_j \beta_j(q_0) \neq 0 \,. \]
Here, $\vert \alpha \vert = \sum_j \vert \alpha_j \vert$. Moreover, we can use the pseudodifferential calculus to apply the Birkhoff algorithm to $\Lh$, changing the classical oscillator $\xi_j^2 + x_j^2$ into the quantum harmonic oscillator
\[ \Ih^{(j)} = -\hbar^2 \partial_{x_j}^2 + x_j^2 \,,\]
whose spectrum consists of the simple eigenvalues $(2n-1)\hbar$, $n \in \N$. Following this idea we prove the following theorem.

\begin{theorem}\label{thm.main.first.bnf}
Let $\varepsilon >0$. Under Assumptions \ref{assump.loc}, \ref{assump.constant.rank} and \ref{assump.betaj.simples}, there exist $b_1 \in (b_0,b_\infty)$, an integer $N_{\text{max}}>0$ and a compactly supported function $f_{\mathbf{1}}^\star \in \Cinf( \R^{2s + 2k} \times \R^s \times [0,1))$ such that
\[ \vert f_{\mathbf{1}}^\star (y,\eta,t,\tau,I,\hbar) \vert \lesssim \left( (\vert I \vert + \hbar)^2 + \vert \tau \vert ( \vert I \vert + \hbar) + \vert \tau \vert^3 \right) \,, \]
satisfying the following properties. For $n \in \N^s$, denote by $\Nh^{[n]}$ the $\hbar$-pseudodifferential operator in $(y,t)$ with symbol
\[ N_\hbar^{[n]} = \langle M(y,\eta,t) \tau, \tau \rangle + \sum_{j=1}^s \widehat{\beta}_j(y,\eta,t) (2n_j-1)\hbar + f_{\mathbf{1}}^\star(y,\eta,t,\tau,(2n-1)\hbar,\hbar) \,.\]
For $\hbar <<1$, there exists a bijection
\[ \Lambda_\hbar : \mathsf{sp}(\Lh) \cap (-\infty, b_1 \hbar) \rightarrow \bigcup_{\vert n \vert \leq N_{\text{max}}} \mathsf{sp} \big{(} \Nh^{[n]} \big{)} \cap (-\infty, b_1 \hbar) \,, \]
such that $\Lambda_\hbar(\lambda) = \lambda + \grandO(\hbar^{\frac{\run}{2}-\varepsilon})$ uniformly with respect to $\lambda$. 
\end{theorem}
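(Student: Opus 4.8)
The plan is to run the semiclassical Birkhoff normal form algorithm on $\widehat{H}$ from Theorem \ref{thm.H.rond.Phi}, quantized via the Weyl calculus, and then to translate the resulting operator-level normal form into a spectral correspondence via Agmon localization and a sharp microlocal confinement argument. First I would transfer $\Lh$ to the model space $\R^{d}_{(x,y,t)}$ using a semiclassical Fourier integral operator $\Uh$ quantizing the canonical transformation $\Phi_{\mathbf{1}}$; by Egorov-type arguments $\Uh^* \Lh \Uh$ is, microlocally near $x=\xi=\tau=0$, an $\hbar$-pseudodifferential operator with principal symbol $\widehat{H}$ and a full symbol expansion $\widehat{H} + \hbar \widehat{H}_1 + \hbar^2 \widehat{H}_2 + \cdots$. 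The next step is the iterative conjugation: at each stage one has an operator of the form $\opH^{[r]} = \Op\big(\langle M\tau,\tau\rangle + \sum_j \widehat\beta_j(\xi_j^2+x_j^2)\big) + \text{(normalized terms)} + R_r$, with remainder $R_r$ of weight $\grandO((x,\xi,\tau)^{r}) + \grandO(\hbar^{r/2})$ after assigning $x,\xi \sim \hbar^{1/2}$, and one seeks a self-adjoint $\hbar$-pseudo $G_r$ so that $e^{iG_r/\hbar}\opH^{[r]}e^{-iG_r/\hbar}$ has its weight-$r$ part in normal form (commuting with the model harmonic oscillators $\Ih^{(j)}$). The homological equation $[\,\Op(\langle M\tau,\tau\rangle + \sum_j\widehat\beta_j(\xi_j^2+x_j^2)),\, G_r] + R_r^{(r)} = \text{(normal part)}$ is solved monomial by monomial: the pieces resonant in $(x,\xi)$ land in the normal form, and the non-resonant ones are removed, which is exactly where the hypothesis $|\alpha| < \run \Rightarrow \sum_j \alpha_j\beta_j(q_0)\neq 0$ is used (the small divisors $\sum_j\alpha_j\widehat\beta_j(y,\eta,t)$ stay bounded away from $0$ on a small enough neighborhood of $q_0$, using Assumption \ref{assump.betaj.simples} for smoothness of the $\widehat\beta_j$). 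Iterating up to weight $r = \run$ produces the function $\fsUn$, and the stated bound on $\fsUn$ comes from bookkeeping: every normalized term carries at least weight $2$ in the "size" variable, so is $\grandO((|I|+\hbar)^2 + |\tau|(|I|+\hbar) + |\tau|^3)$ after substituting $\xi_j^2+x_j^2 \mapsto I_j = (2n_j-1)\hbar$.

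Having the normal form $\opHc_\hbar := \Op\big(\langle M\tau,\tau\rangle + \sum_j\widehat\beta_j\,\Ih^{(j)} + \fsUn(\cdots,\Ih^{(1)},\dots,\Ih^{(s)},\hbar)\big) + \grandO(\hbar^{\run/2})$ (with the residual a genuine $\grandO(\hbar^{\run/2-\varepsilon})$-bounded operator in the relevant spectral window, once cut off away from $\Sigma$), I would decompose the model Hilbert space along the spectral subspaces of the commuting family $(\Ih^{(j)})_j$: on the eigenspace labelled by $n\in\N^s$ the operators $\Ih^{(j)}$ act as the scalars $(2n_j-1)\hbar$, and the restriction of $\opHc_\hbar$ there is exactly the operator $\Nh^{[n]}$ acting in the remaining $(y,t)$ variables. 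The lower bound \eqref{eq.minoration.par.b}, together with the fact that $\Nh^{[n]} \geq \hbar\sum_j(2n_j-1)\widehat\beta_j(\cdot) - \grandO(\hbar^{3/2}) \geq |n|\,b_0\hbar(1-o(1))$, shows that only finitely many $n$ with $|n| \leq N_{\mathrm{max}}$ contribute below $b_1\hbar$, so the direct sum is finite. Combining this with the usual Agmon estimates (mentioned in the excerpt) to justify that everything may be localized in $\Omega$ and that the errors introduced by the various cutoffs are $\grandO(\hbar^\infty)$, one gets that the spectrum of $\Lh$ below $b_1\hbar$ and the union of the spectra of the $\Nh^{[n]}$ below $b_1\hbar$ are $\grandO(\hbar^{\run/2-\varepsilon})$-close as subsets of $\R$, counting multiplicity; a spectral-gap / counting argument then upgrades this to the claimed bijection $\Lambda_\hbar$ with $\Lambda_\hbar(\lambda)=\lambda+\grandO(\hbar^{\run/2-\varepsilon})$.

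I expect the main obstacle to be the careful control of the two coupled scales in the remainder estimates: the weight in $(x,\xi,\tau)$ and the power of $\hbar$ must be tracked jointly (the rescaling $x,\xi\sim\hbar^{1/2}$ is only a heuristic — rigorously one works with a quantization adapted to the anisotropy, or with explicit symbol classes in which $x,\xi$ and $\tau$ play asymmetric roles), and one must check that the conjugations $e^{\pm iG_r/\hbar}$ are bounded and preserve the relevant operator classes and the microlocalization near $\Sigma$. A secondary technical point is the treatment of the $\tau$-variable: because $\langle M\tau,\tau\rangle$ is only quadratic (not elliptic of the same strength as the oscillator in $(x,\xi)$), the terms mixing $\tau$ with $(x,\xi)$ must be normalized too, and one should verify that no small-divisor problem arises there — it does not, since the $\tau$-directions correspond to genuine (non-oscillatory) momenta and the only resonances to avoid are among the $\beta_j$. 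Finally, turning the approximate spectral equality into an honest bijection requires that no eigenvalues of the $\Nh^{[n]}$ "leak" across the threshold $b_1\hbar$; choosing $b_1$ generically in $(b_0,b_\infty)$ and invoking the a priori control of eigenvalue density handles this.
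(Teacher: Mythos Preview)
Your proposal is correct and follows essentially the same route as the paper: Egorov/FIO transfer to the model via $\Phi_{\mathbf{1}}$, formal Birkhoff normal form solved by the homological equation with the non-resonance condition on the $\widehat\beta_j$, quantization yielding $\Nh$ commuting with the $\Ih^{(j)}$, Hermite decomposition into the $\Nh^{[n]}$, and min-max comparison of spectra.

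Two small refinements are worth noting. First, in the homological equation you include $\langle M\tau,\tau\rangle$ in the model operator; the paper shows instead that $\frac{i}{\hbar}[\langle M(w,t)\tau,\tau\rangle,\rho']$ and $|z_j|^2\frac{i}{\hbar}[\widehat\beta_j,\rho']$ are automatically of higher order (derivatives in $(y,\eta,t)$ do not lower the degree, and $\tau$-derivatives reintroduce a $\tau$), so the equation one actually solves is simply $R_N = K_N + \sum_j \widehat\beta_j\,\frac{i}{\hbar}\ad_{|z_j|^2}\rho'$. This explains cleanly why no divisor issue arises from $\tau$: the $\tau$-part of $H_2$ never enters the cohomological problem. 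Second, the rigorous substitute for your heuristic $x,\xi\sim\hbar^{1/2}$ is not an anisotropic calculus but a direct microlocalization lemma: one shows, via G\aa rding and an iteration with nested cutoffs, that eigenfunctions of both $\Lh$ and $\Nh$ below $b_1\hbar$ satisfy $\psi = \Op\chi(\hbar^{-\delta}(x,\xi,\tau))\psi + \grandO(\hbar^\infty)$ for any $\delta\in(0,\tfrac12)$, which converts the $\grandO((x,\xi,\tau,\hbar^{1/2})^{\run})$ remainder into $\grandO(\hbar^{\delta\run}) = \grandO(\hbar^{\run/2-\varepsilon})$. With these two points in hand the min-max argument gives $\lambda_n(\Lh)=\lambda_n(\Nh)+\grandO(\hbar^{\run/2-\varepsilon})$ directly, and no genericity of $b_1$ is needed for the bijection.
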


\begin{remark}
In this theorem $\mathsf{sp}(\mathcal{A})$ denotes the \textit{repeated} eigenvalues of an operator $\mathcal{A}$, so that there might be some multiple eigenvalues, but $\Lambda_\hbar$ preserves this multiplicity. We only consider self-adjoint operators with discrete spectrum.
\end{remark}

\begin{remark}\label{remark.b1.petit}
One should care of how large $b_1$ can be. As mentionned above, the eigenfunctions of energy $< b_1 \hbar$ are exponentially small outside $K_{b_1} = \lbrace q \in M\,, \quad b(q) \leq b_1 \rbrace$. Thus, we will chose $b_1$ such that $K_{b_1} \subset \Omega$, where $\Omega$ is some neighborhood of $q_0$. Hence the larger $\Omega$ is, the greater $b_1$ can be. However, there are three restrictions on the size of $\Omega$:
\begin{enumerate}
\item[•] The rank of $\B(q)$ is constant on $\Omega$,
\item[•] There exist canonical coordinates $\varphi$ on $\Omega$ (i.e. such that $\varphi^* B = \dd \eta \wedge \dd y$),
\item[•] There is no resonance in $\Omega$:
\[ \forall q \in \Omega \,, \quad \forall \alpha \in \Z^s \,, \quad 0<\vert \alpha \vert< \run \Rightarrow \sum_{j=1}^s \alpha_j \beta_j(q) \neq 0 \,. \]
\end{enumerate}
\end{remark}

\begin{remark}
If $k=0$ we recover the result of \cite{MagBNF}. Here we want to study the influence of a non-zero kernel $k >0$. This result generalizes the result of \cite{Birkhoff3D}, which corresponds to $d=3$, $s=k=1$, on the Euclidean $\R^3$. However, this generalization is not straightforward since the magnetic geometry is much more complicated in higher dimensions, in particular if $k>1$. Moreover, there is a new phenomena in higher dimensions : resonances between the functions $\beta_j$ (as in \cite{MagBNF}).
\end{remark}

The spectrum of $\Lh$ in $(-\infty,b_1\hbar)$ is reduced to the operators $\Nh^{[n]}$. Actually if we chose $b_1$ small enough, it is reduced to the first operator $\Nh^{[1]}$ (Here we denote the multi-integer $1 = (1, \cdots, 1) \in \N^s$). Hence in the second part of this paper, we study the spectrum $\Nh^{[1]}$ using a second Birkhoff normal form. Indeed, the symbol of $\Nh^{[1]}$ is
\[ N_{\hbar}^{[1]}(w,t,\tau) = \langle M(w,t) \tau, \tau \rangle + \hbar \widehat{b}(w,t) + \grandO(\hbar^2) + \grandO( \tau \hbar) + \grandO(\tau^3)\,, \]
so if we denote by $s(w)$ the minimum point of $t \mapsto \widehat b (w,t)$ (which is unique on a neighborhood of $0$), we get the following expansion
\[ N_{\hbar}^{[1]}(w,t,\tau) = \langle M(w,s(w)) \tau, \tau \rangle + \frac{\hbar}{2} \langle \frac{\partial^{2}\widehat{b}}{\partial t^{2}}(w,s(w)) \cdot (t-s(w)), t-s(w) \rangle + \cdots \]
and the principal part is a harmonic oscillator with frequences $\sqrt{\hbar} \nu_j(w)$ ($1 \leq j \leq k$) where $(\nu_j^2(w))_{1 \leq j \leq k}$ are the eigenvalues of the symmetric matrix:
\[ M(w,s(w))^{1/2} \cdot \frac{1}{2} \partial_t^2 \hat{b}(w,s(w)) \cdot M(w,s(w))^{1/2} \,.\]
These frequences are smooth non-vanishing functions of $w$ on a neighborhood of $0$, as soon as we assume that they are simple.

\begin{assumption}\label{assump.nu.j.simples}
$\nu_i(0) \neq \nu_j(0)$ for indices $1 \leq i < j \leq k$.
\end{assumption}

We fix an integer $\rdeux \in \N$ such that
\[ \forall \alpha \in \Z^k\,, \quad 0 < \vert \alpha \vert < \rdeux \Rightarrow \sum_{j=1}^k \alpha_j \nu_j(0) \neq 0 \,,\]
and we prove the following reduction theorem for $\Nh^{[1]}$.

\begin{theorem}\label{Chap4-Thm-spec-Nh0}
Let $c>0$ and $\delta \in (0,\frac{1}{2})$. Under assumptions \ref{assump.loc}, \ref{assump.constant.rank}, \ref{assump.betaj.simples} and \ref{assump.nu.j.simples}, with $k>0$, there exists a compactly supported function $f_{\mathbf{2}}^\star \in \Cinf( \R^{2s} \times \R^k \times [0,1) )$ such that
\[ \vert f_{\mathbf{2}}^\star(y,\eta,J, \sqrt{\hbar}) \vert \lesssim \left( \vert J \vert + \sqrt \hbar \right)^2 \,,\]
satisfying the following properties. For $n \in \N^k$, denote by $\Mh^{[n]}$ the $\hbar$-pseudodifferential operator in $y$ with symbol
\[ M_\hbar^{[n]}(y,\eta) = \widehat b (y,\eta,s(y,\eta)) + \sqrt{\hbar} \sum_{j=1}^k \nu_j(y,\eta)(2n_j -1) + f_{\mathbf{2}}^\star (y,\eta, (2n-1)\sqrt \hbar, \sqrt{\hbar})\,. \]
For $\hbar <<1$, there exists a bijection
\[ \Lambda_\hbar : \mathsf{sp}(\Nh^{[1]}) \cap (-\infty, (b_0 + c \hbar^\delta)\hbar) \rightarrow \bigcup_{n \in \N^k} \mathsf{sp}(\hbar \Mh^{[n]}) \cap (-\infty, (b_0 + c \hbar^\delta)\hbar)\,, \]
such that $\Lambda_\hbar(\lambda) = \lambda + \grandO( \hbar^{1+ \delta \rdeux/2})$ uniformly with respect to $\lambda$.
\end{theorem}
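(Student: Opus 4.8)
The plan is to mimic the strategy already used for Theorem \ref{thm.main.first.bnf}, but now applied to the single operator $\Nh^{[1]}$ and to its "kernel" variables $(t,\tau)$ rather than to the cyclotron variables $(x,\xi)$. The starting point is the expansion of the symbol $N_\hbar^{[1]}(w,t,\tau)$ recorded just before the statement. First I would implement a microlocalization step: using the lower bound \eqref{eq.minoration.par.b} (and the associated Agmon-type estimates), eigenfunctions of $\Nh^{[1]}$ with eigenvalue below $(b_0 + c\hbar^\delta)\hbar$ are concentrated, in the $w$-variable, on an $\grandO(\hbar^{\delta/2})$-neighborhood of $0$ and, in the $(t,\tau)$-variable, on an $\grandO(\sqrt{\hbar})$-neighborhood of the section $t = s(w)$, $\tau = 0$. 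After translating $t \mapsto t - s(w)$ (a symplectic change of variables in the $(t,\tau)$ fibers, which can be quantized) and performing the linear symplectic change that diagonalizes the Hessian, the principal part of $\hbar^{-1}N_\hbar^{[1]}$ becomes $\widehat b(w,s(w))$ plus a semiclassical harmonic oscillator $\sum_{j=1}^k \nu_j(w)\big(-\hbar\partial_{t_j}^2 + t_j^2\big)\sqrt{\hbar}$ — note the effective semiclassical parameter here is $\sqrt{\hbar}$, because $\tau \sim \hbar^{1/4}$ and $t - s(w) \sim \hbar^{1/4}$ on the relevant region. This explains the $\sqrt{\hbar}$-expansion and the appearance of $\sqrt{\hbar}\,\nu_j(2n_j-1)$.

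Next I would run the semiclassical Birkhoff normal form algorithm in the $(t,\tau)$ variables, with small parameter $\sqrt{\hbar}$, exactly as in the proof of Theorem \ref{thm.H.rond.Phi} and Theorem \ref{thm.main.first.bnf}. At each step one solves a cohomological equation $\ad_{\Jh}(G) = $ (current remainder), where $\Jh = \sum_j \nu_j(w)(t_j^2 + \tau_j^2)/2$ is the leading oscillator; the obstruction is precisely the kernel of $\ad_{\Jh}$, i.e. the functions of $w$ and of the actions $J_j = t_j^2 + \tau_j^2$. Resonances among the $\nu_j(0)$ limit how far one can iterate; this is why $\rdeux$ is introduced and why only monomials with $|\alpha| \geq \rdeux$ survive. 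Because the normalization is performed microlocally on a fixed neighborhood of the well, the remaining terms of order $\grandO\big((|J| + \sqrt\hbar)^{\rdeux}\big)$ translate, on the quantum side, into an error $\grandO(\hbar^{1+\delta\rdeux/2})$ once one inserts $J \sim \hbar^{\delta}$ — more precisely, after multiplying by the global $\hbar$ prefactor and using that the relevant eigenvalue window forces $|J| \lesssim \hbar^{\delta}$. The function $f_{\mathbf{2}}^\star$ collects all the normalized terms of order $\geq 2$ in $(|J|+\sqrt\hbar)$, and is smooth and compactly supported by construction (after a cutoff); Assumption \ref{assump.nu.j.simples} ensures the $\nu_j$ are smooth in $w$ so the algorithm stays in the smooth category.

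Once the operator has been conjugated (by a product of $\hbar$-FIOs built from the Birkhoff generators) to a function of the commuting quantum actions $\Jh^{(j)} = -\hbar\partial_{t_j}^2 + t_j^2$ modulo the controlled remainder, I would finish by the standard spectral comparison: decompose $\Ld$ along the joint eigenspaces of the $\Jh^{(j)}$, indexed by $n \in \N^k$ with eigenvalues $(2n_j-1)\sqrt\hbar$ (here one must be slightly careful and only keep those $n$ contributing below the spectral window, which is a finite set once $\hbar$ is small, since larger $|n|$ push the energy above $(b_0 + c\hbar^\delta)\hbar$ by the positivity of the $\nu_j$), and on the $n$-th block the operator reduces to $\hbar\Mh^{[n]}$ acting on the $y$-variable. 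A microlocal Grushin/Feshbach reduction, together with the exponential decay estimates to discard the spillover outside the normalizing neighborhood, yields the bijection $\Lambda_\hbar$ preserving multiplicities, with the asserted error. The main obstacle, as in the $k=0$ theory, is the bookkeeping of the two genuinely different scales ($\hbar$ for the cyclotron part already folded into $\Nh^{[1]}$, and $\sqrt\hbar$ for the kernel oscillator) and making the Birkhoff iteration uniform in $w$ on a neighborhood whose size must be compatible with all of: constant rank, existence of Darboux coordinates, absence of $\nu$-resonances, and smallness of the spectral window $c\hbar^\delta$; keeping the remainder estimates sharp enough to land at $\grandO(\hbar^{1 + \delta\rdeux/2})$ rather than something weaker is where the delicate part lies.
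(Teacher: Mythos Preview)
Your outline is correct and matches the paper's proof almost step for step: symplectic reduction in $(t,\tau)$ centered at $t=s(w)$ and diagonalisation of the Hessian (Lemma~\ref{Chap4-Lemma-Red-Nh0}), a formal Birkhoff normal form with effective parameter $h=\sqrt{\hbar}$ (Lemma~\ref{Chap4-formalBNF2}), its quantization via an Egorov argument (Theorem~\ref{Chap4-Thm-2ndFormeNormale}), and finally a microlocalisation lemma plus a min--max comparison (the paper does not use a Grushin reduction here, only the variational argument as in Lemma~\ref{Chap4-lemme-ineq1}). The one technical device you do not name explicitly but which the paper relies on is a \emph{mixed quantization} $\Opt$ (a $\sqrt{\hbar}$-Weyl quantization in $(t,\tilde\tau)$ and $\hbar$-Weyl in $(y,\eta)$) to make the two scales coexist cleanly; this is exactly what underlies your remark that ``the effective semiclassical parameter here is $\sqrt{\hbar}$''.

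Two small corrections. First, the remainder after the Birkhoff step is $\grandO\big((t,\tilde\tau,\sqrt{\hbar})^{\rdeux}\big)$ in the \emph{linear} variables, not $\grandO\big((|J|+\sqrt{\hbar})^{\rdeux}\big)$; only the normalised part $\kappa$ is a function of the actions, and the error bound $\hbar^{1+\delta\rdeux/2}$ comes from inserting $|t|,|\tilde\tau|\lesssim \hbar^{\delta/2}$ (Lemma~\ref{Chap4-lemma-microlocMh}). Second, the set of contributing $n\in\N^k$ is \emph{not} finite uniformly in $\hbar$: the window $c\hbar^{\delta}$ with $\delta<1/2$ allows $|n|\lesssim \hbar^{\delta-1/2}\to\infty$, but this is harmless since the min--max comparison works eigenvalue by eigenvalue without any uniform bound on $n$.
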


\begin{remark}
The threshold $b_0 + c \hbar^\delta$ is needed to get microlocalization of the eigenfunctions of $\Nh^{[1]}$ in an arbitrarily small neighborhood of $\tau =0$.
\end{remark}

\begin{remark}
This second harmonic oscillator (in variables $(t,\tau)$) corresponds to a classical oscillation in the directions of the field lines. We see that this new motion, due to the kernel of $\B$, induces powers of $\sqrt{\hbar}$ in the spectrum.
\end{remark}

As a corollary, we get a description of the low-lying eigenvalues of $\Lh$ by the effective operator $\hbar \Mh^{[1]}$.

\begin{corollary}\label{cor.reduction.Mh0}
Let $\varepsilon >0$ and $c \in (0, \min_j \nu_j(0))$. Denote by $\nu(0) = \sum_j \nu_j(0)$ and $r= \min (2 \run, \rdeux + 4)$. Under assumptions \ref{assump.loc}, \ref{assump.constant.rank}, \ref{assump.betaj.simples} and \ref{assump.nu.j.simples}, with $k>0$, there exists a bijection
\[ \Lambda_\hbar : \mathsf{sp}(\Lh) \cap (-\infty, \hbar b_0 + \hbar^{3/2}(\nu(0) + 2c)) \rightarrow \mathsf{sp}(\hbar \Mh^{[1]}) \cap (-\infty, \hbar b_0 + \hbar^{3/2}(\nu(0)+2c)) \]
such that $\Lambda_\hbar(\lambda) = \lambda + \grandO( \hbar^{r/4 - \varepsilon})$ uniformly with respect to $\lambda$.
\end{corollary}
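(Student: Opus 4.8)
The plan is to compose the two bijections provided by Theorems \ref{thm.main.first.bnf} and \ref{Chap4-Thm-spec-Nh0}, after first checking that the spectral windows match up. First I would fix $c \in (0,\min_j \nu_j(0))$ and $\varepsilon > 0$, and choose $\delta \in (0,\tfrac12)$ close enough to $\tfrac12$ — concretely, $\delta$ will be taken so that $\delta \rdeux/2$ is as close as possible to $\rdeux/4$ and so that the error terms below behave; the precise value is pinned down at the end. The target energy is $\hbar b_0 + \hbar^{3/2}(\nu(0) + 2c)$, which is of the form $(b_0 + \hbar^{1/2}(\nu(0)+2c))\hbar$, hence below $(b_0 + c'\hbar^\delta)\hbar$ for any $c'>0$ once $\hbar$ is small (since $\hbar^{1/2} = o(\hbar^\delta)$ is false — rather $\hbar^{1/2} \le \hbar^\delta$ for $\delta < 1/2$, so $\hbar^{3/2}(\nu(0)+2c) \le \hbar^{1+\delta}(\nu(0)+2c) \le c'\hbar^{1+\delta}$ eventually). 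In particular the window $(-\infty, \hbar b_0 + \hbar^{3/2}(\nu(0)+2c))$ is contained in $(-\infty,(b_0+c'\hbar^\delta)\hbar)$ for suitable $c'$ and small $\hbar$, and it is also contained in $(-\infty, b_1\hbar)$ since $b_1 > b_0$ is fixed. So Theorem \ref{thm.main.first.bnf} applies on this window.

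Next I would argue that on this energy window only the operator $\Nh^{[\un]}$ contributes. By Theorem \ref{thm.main.first.bnf}, $\spectrum(\Lh)$ below $b_1\hbar$ corresponds, modulo $\grandO(\hbar^{\run/2 - \varepsilon})$, to $\bigcup_{|n|\le N_{\max}} \spectrum(\Nh^{[n]})$. For $n \ne \un$ the symbol $N_\hbar^{[n]}$ has the term $\sum_j \hatbeta_j(y,\eta,t)(2n_j-1)\hbar$ with at least one $n_j \ge 2$; since $\hatb = \sum_j \hatbeta_j \ge b_0$ with equality only at the well and $\langle M\tau,\tau\rangle \ge 0$ and $|f_{\un}^\star|$ is controlled by $(\vert I\vert+\hbar)^2 + \cdots$, a Gårding-type lower bound (cf. \eqref{eq.minoration.par.b} applied at the level of the effective operators, or a direct argument on the symbols) gives $\Nh^{[n]} \ge (b_0 + 2b_{0,\min}'\hbar)\hbar$ for some fixed positive constant when $n\ne\un$, which exceeds $\hbar b_0 + \hbar^{3/2}(\nu(0)+2c)$ for small $\hbar$; hence $\spectrum(\Nh^{[n]}) \cap (-\infty, \hbar b_0 + \hbar^{3/2}(\nu(0)+2c)) = \emptyset$. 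Therefore the window of $\spectrum(\Lh)$ is in bijection with $\spectrum(\Nh^{[\un]})$ on a slightly enlarged window (enlarged by $\grandO(\hbar^{\run/2-\varepsilon})$), and on that enlarged window I apply Theorem \ref{Chap4-Thm-spec-Nh0} to replace $\spectrum(\Nh^{[\un]})$ by $\bigcup_{n\in\N^k}\spectrum(\hbar\Mh^{[n]})$ modulo $\grandO(\hbar^{1+\delta\rdeux/2})$. Then one repeats the same elimination argument at the second level: for $n\ne\un$ in $\N^k$, the symbol $M_\hbar^{[n]}$ has $\sqrt\hbar\sum_j\nu_j(y,\eta)(2n_j-1)$ with some $n_j\ge2$, so $\hbar\Mh^{[n]} \ge (b_0 + (\nu(0)+2\min_j\nu_j(0) - o(1))\sqrt\hbar)\hbar$, which for $c<\min_j\nu_j(0)$ lies above $\hbar b_0 + \hbar^{3/2}(\nu(0)+2c)$; so only $\hbar\Mh^{[\un]}$ survives. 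Composing the two bijections and tracking the two error terms gives $\Lambda_\hbar(\lambda) = \lambda + \grandO(\hbar^{\run/2-\varepsilon}) + \grandO(\hbar^{1+\delta\rdeux/2})$; optimizing $\delta\uparrow\tfrac12$ turns the second error into $\grandO(\hbar^{1+\rdeux/4-\varepsilon})$, and since $\run/2 = 2\run/4$ and $1 + \rdeux/4 = (\rdeux+4)/4$, the combined error is $\grandO(\hbar^{r/4-\varepsilon})$ with $r = \min(2\run,\rdeux+4)$, as claimed.

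The bijection $\Lambda_\hbar$ is then the composite of the two bijections, restricted to the stated windows; multiplicities are preserved because each of the two bijections preserves repeated eigenvalues (as noted in the remarks following the two theorems). One technical point requiring care is that the windows of the \emph{source} and \emph{target} spectra in Theorems \ref{thm.main.first.bnf} and \ref{Chap4-Thm-spec-Nh0} are the \emph{same} interval $(-\infty, E_\hbar)$ on both sides, whereas here I want to conclude on a fixed window $(-\infty, \hbar b_0 + \hbar^{3/2}(\nu(0)+2c))$; this forces a small bootstrapping between slightly different thresholds $E_\hbar, E_\hbar \pm \grandO(\hbar^{\run/2-\varepsilon})$, etc., which is harmless provided no eigenvalue sits exactly in the $\grandO(\cdot)$-thin boundary layers — and this is guaranteed because the spectra of the effective operators $\Nh^{[n]}$ and $\hbar\Mh^{[n]}$ with $n\ne\un$ are bounded below, strictly away from the window, by the Gårding arguments above (so the only relevant spectrum is that of $\Nh^{[\un]}$, resp.\ $\hbar\Mh^{[\un]}$, and one chooses $c$ generically, or uses that the statement is about an \emph{asymptotic} correspondence so boundary effects are absorbed in the $\grandO$).

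\textbf{Main obstacle.} The crux is the spectral-window bookkeeping: making precise that only $\Nh^{[\un]}$ and then only $\hbar\Mh^{[\un]}$ contribute below $\hbar b_0 + \hbar^{3/2}(\nu(0)+2c)$, which rests on quantitative lower bounds for $\Nh^{[n]}$ and $\hbar\Mh^{[n]}$ when $n\ne\un$. These come from the explicit structure of the symbols ($\hatb \ge b_0$, $M$ positive definite, the $f^\star$ remainders subquadratic in $(I,\hbar)$ resp.\ $(J,\sqrt\hbar)$) combined with the sharp Gårding inequality; the constant $c < \min_j\nu_j(0)$ is exactly what is needed to separate the $n=\un$ branch of the second oscillator from the $n\ne\un$ branches within an $\hbar^{3/2}$-window. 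Everything else is a routine composition of the two reduction theorems and an optimization over $\delta$.
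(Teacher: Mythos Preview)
Your proposal is correct and follows essentially the same route as the paper's proof: apply Theorem \ref{thm.main.first.bnf}, eliminate the branches $\Nh^{[n]}$ with $n\neq\un$ via a G\aa rding lower bound on their symbols, then apply Theorem \ref{Chap4-Thm-spec-Nh0}, eliminate the branches $\hbar\Mh^{[n]}$ with $n\neq\un$ similarly, and combine the errors $\grandO(\hbar^{\run/2-\varepsilon})$ and $\grandO(\hbar^{1+\delta\rdeux/2})$ with $\delta\uparrow\tfrac12$. One detail you leave implicit and the paper makes explicit: your ``$-o(1)$'' in the bound for $M_\hbar^{[n]}$ relies on knowing that only $(y,\eta)$ near $0$ matter (so that $\nu_j(y,\eta)\approx\nu_j(0)$), and the paper invokes the microlocalization Lemma \ref{Chap4-lemma-microlocMh} for this.
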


We deduce the following eigenvalue asymptotics.

\begin{corollary}\label{cor.eigenvalue.asymptotics}
Under the assumptions of corollary \ref{cor.reduction.Mh0}, for $j \in \N$, the $j$-th eigenvalue of $\Lh$ admits an expansion
\[ \lambda_j(\Lh) = \hbar \sum_{\ell = 0}^{\lfloor r/2 \rfloor -2} \alpha_{j\ell} \hbar^{\ell/2} + \grandO( \hbar^{r/4 - \varepsilon} ) \,, \] with coefficients $\alpha_{j \ell} \in \R$ such that:
\[ \alpha_{j,0} = b_0, \quad \alpha_{j,1} = \sum_{j=1}^{k} \nu_j(0), \quad \alpha_{j,2} = E_j + c_0 \,, \]
where $c_0 \in \R$ and $\hbar E_j$ is the $j$-th eigenvalue of a $s$-dimensional harmonic oscillator.
\end{corollary}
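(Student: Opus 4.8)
The plan is to combine Corollary \ref{cor.reduction.Mh0} with a semiclassical analysis of the effective operator $\hbar \Mh^{[1]}$, which is a $\hbar$-pseudodifferential operator in $y \in \R^s$ with symbol
\[ M_\hbar^{[1]}(y,\eta) = \hatb(y,\eta,s(y,\eta)) + \sqrt{\hbar} \sum_{j=1}^k \nu_j(y,\eta) + f_{\mathbf{2}}^\star(y,\eta,\sqrt{\hbar} \cdot \mathbf{1},\sqrt{\hbar}) \,, \]
where $f_{\mathbf{2}}^\star = \grandO((\sqrt\hbar)^2) = \grandO(\hbar)$. By Corollary \ref{cor.reduction.Mh0}, the $j$-th eigenvalue of $\Lh$ equals $\hbar$ times the $j$-th eigenvalue of $\Mh^{[1]}$, up to $\grandO(\hbar^{r/4-\varepsilon})$, as long as we stay below the energy threshold $b_0 + \hbar^{1/2}(\nu(0)+2c)$; since the error is a power of $\hbar^{1/2}$ strictly larger than $\hbar^{1/2}$ itself, it suffices to expand the $j$-th eigenvalue of $\Mh^{[1]}$ to order $\hbar^{\lfloor r/2\rfloor - 2}$ in powers of $\sqrt\hbar$.

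First I would observe that $\Mh^{[1]}$ has principal symbol $\hatb(y,\eta,s(y,\eta))$, which by Assumption \ref{assump.loc} (transported through the diffeomorphisms $\varphi$ and $\Phi_{\mathbf{1}}$, and the reduction $t \mapsto s(y,\eta)$) has a unique, non-degenerate minimum equal to $b_0$ at $(y,\eta) = (0,0)$. The subprincipal term $\sqrt\hbar \sum_j \nu_j(y,\eta)$ contributes $\sqrt{\hbar}\,\nu(0)$ at leading order. I would therefore perform a harmonic approximation / WKB-type expansion of $\Mh^{[1]}$ near its well: rescale $y = \hbar^{1/4} \tilde y$ (the natural scale since the minimum is non-degenerate and the semiclassical parameter in the $\sqrt\hbar$-expansion behaves like $\hbar^{1/2}$ relative to the quadratic well), so that after conjugation the rescaled operator becomes $b_0 + \sqrt\hbar\, \mathcal{Q} + \grandO(\hbar^{3/4})$ where $\mathcal{Q}$ is an $s$-dimensional harmonic oscillator built from $\frac12 \Hess_{(0,0)} \hatb(\cdot,s(\cdot))$ together with the quadratic form $\dd\eta\wedge\dd y$. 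Its eigenvalues are $E_j$, the $j$-th eigenvalue of this $s$-dimensional oscillator, and the next term $\sqrt\hbar\,\nu(0)$ contributes a constant $c_0$ (more precisely $c_0 = \nu(0)$ plus the contribution of $f_{\mathbf{2}}^\star$ at its leading $\hbar$-order, but here I only need that $\alpha_{j,2}$ is of the form $E_j + c_0$ with $c_0$ independent of $j$). This gives $\alpha_{j,0}=b_0$, $\alpha_{j,1}=\nu(0)=\sum_{j=1}^k\nu_j(0)$, $\alpha_{j,2}=E_j+c_0$. For the higher coefficients $\alpha_{j,\ell}$ with $\ell \geq 3$, the existence of a full asymptotic expansion in powers of $\hbar^{1/2}$ up to the stated order is standard for a $\hbar$-pseudodifferential operator with a non-degenerate well whose symbol admits an expansion in $\sqrt\hbar$: one applies the Birkhoff normal form / spectral reduction near the bottom of a non-degenerate miniwell (exactly the kind of result established earlier in this paper, or a direct Taylor-expansion argument as in \cite{Sjo92, MagBNF}), valid provided we stay strictly below the energy window $b_0 + \hbar^{1/2}(\nu(0)+2c)$, which is exactly the range in which Corollary \ref{cor.reduction.Mh0} applies.

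The main obstacle, and the point requiring the most care, is bookkeeping the error orders so that the remainder in Corollary \ref{cor.eigenvalue.asymptotics} is genuinely $\grandO(\hbar^{r/4-\varepsilon})$ and not worse: the reduction error from Corollary \ref{cor.reduction.Mh0} is $\grandO(\hbar^{r/4-\varepsilon})$, while the spectral expansion of $\hbar\Mh^{[1]}$ must be pushed to matching order, i.e. to order $\hbar^{1+(r/2-2)\cdot(1/2)} = \hbar^{r/4}$ in absolute terms (hence $\hbar^{r/2-2}$ relative to the $\hbar$ prefactor and in powers of $\hbar^{1/2}$), which forces the summation index $\ell$ to run up to $\lfloor r/2 \rfloor - 2$. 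One must also check that the harmonic oscillator $\mathcal{Q}$ is indeed $s$-dimensional (the $\tau$-directions having been eliminated in the passage to $\Mh^{[1]}$) and that its non-degeneracy follows from Assumption \ref{assump.loc}; and that the rescaling $y=\hbar^{1/4}\tilde y$ is compatible with the compact support and symbol estimate $|f_{\mathbf{2}}^\star| \lesssim (|J|+\sqrt\hbar)^2$, which guarantees that the perturbation terms are of the claimed subleading size after rescaling. None of these steps is deep individually; the delicacy is entirely in the uniform error tracking across the two successive Birkhoff reductions and the final miniwell expansion.
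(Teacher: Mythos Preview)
Your overall strategy---reduce to $\Mh^{[1]}$ via Corollary~\ref{cor.reduction.Mh0} and then run a harmonic-well expansion for the resulting $s$-dimensional $\hbar$-pseudodifferential operator---is exactly what the paper does. Two points deserve correction or more care.

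First, the rescaling exponent is wrong. For a $\hbar$-pseudodifferential operator with a non-degenerate quadratic well, the eigenfunctions localize on the scale $|w|\sim\hbar^{1/2}$, not $\hbar^{1/4}$; equivalently, under $w=\hbar^{1/2}\tilde w$ the quadratic part becomes $\hbar$ times a fixed oscillator, so the spacing is $\hbar E_j$, not $\sqrt{\hbar}\,E_j$ as your formula $b_0+\sqrt{\hbar}\,\mathcal Q+\grandO(\hbar^{3/4})$ would suggest. This matches the statement that $\alpha_{j,2}=E_j+c_0$ sits at order $\hbar$ in $\lambda_j(\Mh^{[1]})$.

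Second, the paper is more explicit about the one step you gloss over: the cross term $\sqrt{\hbar}\,\nabla\nu(0)\cdot w$ in the expansion of $M_\hbar^{[1]}$. After diagonalizing the Hessian of $\hatb(\cdot,s(\cdot))$, the paper completes the square,
\[
\sum_j \mu_j|w_j|^2 + \sqrt{\hbar}\,\nabla\nu(0)\cdot w
= \sum_j \mu_j\Bigl|w_j+\tfrac{\sqrt{\hbar}}{2\mu_j}\partial_{w_j}\nu(0)\Bigr|^2 + (\text{new constant of order }\hbar),
\]
and removes the shift by conjugating with an explicit phase-space translation $U_\hbar$. Only then does it invoke the standard bottom-of-the-well result (Charles--V\~u Ng\d{o}c / Helffer--Sj\"ostrand). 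This cross term is precisely what feeds into $c_0$; your parenthetical ``$c_0=\nu(0)$ plus the contribution of $f_{\mathbf 2}^\star$'' is off (that is $\alpha_{j,1}$), and without the completion of the square your rescaled operator at order $\hbar$ is not a pure oscillator but an oscillator plus a linear term in $\tilde w$, so the claim $\alpha_{j,2}=E_j+c_0$ is not yet justified. Once you insert this step, your argument and the paper's coincide.
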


\begin{remark}
$\hbar E_j$ is the $j$-th eigenvalue of a harmonic oscillator whose symbol is given by the Hessian at $w=0$ of $\hat b (w, s(w))$. Hence, it corresponds to a third classical oscillatory motion : a rotation in the space of field lines.
\end{remark}

\begin{remark}
The asymptotics
\[ \lambda_j(\Lh) = b_0 \hbar + \nu(0) \hbar^{3/2} + (E_j + c_0) \hbar^{2} + o(\hbar^2) \]
were unknown before, except in the special $3d$-case $M= \R^3$ in \cite{Birkhoff3D}. 
\end{remark}

\subsection{Related questions and perspectives}

In this paper, we are restricted to energies $\lambda < b_1 \hbar$, and as mentionned in Remark \ref{remark.b1.petit}, the threshold $b_1 > b_0$ is limited by three conditions, including the non-resonance one:
\[ \forall q \in \Omega \,, \quad \forall \alpha \in \Z^s \,, \quad 0<\vert \alpha \vert< \run \Rightarrow \sum_{j=1}^s \alpha_j \beta_j(q) \neq 0 \,. \]
It would be interesting to study the influence of resonances between the functions $\beta_j$ on the spectrum of $\Lh$. Maybe a Grushin reduction method could help, as in \cite{HelKor14-2} for instance. A Birkhoff normal form was given in \cite{CharlesVuNgoc} for a Schrödinger operator $-\hbar^2 \Delta + V$ with resonances, but the situation is somehow simpler, since the analogues of $\beta_j(q)$ are independent of $q$ in this context.

\medskip

We are also restricted by the existence of Darboux coordinates $\varphi$ on $(\Sigma,B)$, such that $\varphi^* B = \dd \eta \wedge \dd y$. Indeed, the coordinates $(y,\eta)$ on $\Sigma$ are necessary to use the Weyl quantization. To study the influence of the global geometry of $B$, one should consider another quantization method for the presymplectic manifold $(\Sigma,B)$. In the symplectic case, for instance in dimension $d=2$, a Toeplitz quantization may be useful. This quantization is linked to the complex structure induced by $B$ on $\Sigma$, and the operator $\Lh$ can be linked with this structure in the following way:
\[\Lh = 4 \hbar^2 \left( \dbar + \frac{i}{2\hbar} A \right)^* \left( \dbar + \frac{i}{2\hbar} A \right) + \hbar B = 4 \hbar^2 \dbar_A^* \dbar_A + \hbar B,  \]
with
\[A = A_1 + i A_2\,, \quad B= \partial_1 A_2 - \partial_2 A_1\,, \quad 2 \dbar = \partial_1 + i \partial_2 \,. \]
In \cite{Prieto}, this is used to compute the spectrum of $\Lh$ on a bidimensional Riemann surface $M$ with constant curvature and constant magnetic field. See also the recent papers \cite{Charles20,Kor20} where semi-excited states for constant magnetic fields in higher dimensions are considered.

\medskip

If the 2-form $B$ is not exact, we usually consider a Bochner Laplacian on the $p$-th tensor product of a complex line bundle $L$ over $M$, with curvature $B$. This Bochner Laplacian $\Delta_p$, depends on $p \in \N$, and the limit $p \rightarrow + \infty$ is interpreted as the semi-classical limit. $\Delta_p$ is a good generalization of the magnetic Laplacian because \textit{locally} it can be written $\frac{1}{\hbar^2}(i\hbar \nabla + \A)^2$, where the potential $\A$ is a local primitive of $B$, and $\hbar = p^{-1}$. For details, we refer to the recent articles \cite{Kor18}, \cite{Kor19}, \cite{MaSa18}, and the references therein. In \cite{Kor19}, Kordyukov constructed quasimodes for $\Delta_p$ in the case of a symplectic $B$ and discrete wells. He proved expansions:
$$\lambda_j(\Delta_p) \sim \sum_{\ell \geq 0} \alpha_{j \ell} p^{-\ell/2}.$$
Our work also gives such expansions for $\Delta_p$ as explained in \cite{Morin-Bochner}.
%Indeed, assumption (\ref{Assumption.b.minimum}) implies that the eigenfunctions of $\Delta_p$ are exponentially localized on an arbitrarily small neighborhood of $q_0$. On this neighborhood, $\Delta_p$ can be written $\frac{1}{\hbar^2} \Lh$, and we could deduce that the eigenvalues of $\frac{1}{\hbar^2}\Lh$ and $\Delta_p$ coincides modulo $\grandO(e^{-c p^{-1/2}})$. Similarly, if $b$ admits several non-degenerate minimas, we could find an effective operator $\Lh^{(j)}$ on a neighborhood of each minimum, and the spectrum of $\Delta_p$ would be given by:
%$$\spectre (\Delta_p ) = \bigcup_{j} \spectre \left( \frac{1}{\hbar} \Lh^{(j)} \right),$$
%modulo an exponentially small error.

\medskip

In this paper, we only mentionned the study of the eigenvalues of $\Lh$: What about the eigenfunctions ? WKB expansions for the $j$-th eigenfunction were constructed on $\R^2$ in \cite{BonRay19}, and on a $2$-dimensional Riemannian manifold in \cite{Tho}. We do not know how to construct magnetic WKB solutions in higher dimensions. This article suggests that the directions corresponding to the kernel of $B$ could play a specific role.

\medskip
 
An other related question is the decreasing of the real eigenfunctions. Agmon estimates only give a $\grandO(e^{-c/\sqrt{\hbar}})$ decay outside any neighborhood of $q_0$, but 2D WKB suggest a $\grandO(e^{-c/\hbar})$ decay. In the recent paper \cite{BonRayVN19}, Bonthonneau, Raymond and V\~u Ng\d{o}c proved this on $\R^2$, using the FBI Transform to work on the phase space $T^* \R^2$. This kind of question is motivated by the study of the tunneling effect: The exponentially small interaction between two magnetic wells for example.

\medskip

%Birkhoff normal forms are of deep importance in the study of inverse problems. Indeed, in various settings, the full classical and semi-classical Birkhoff normal forms can be recovered from the spectrum of the quantum operator we consider. See for example the works of Guillemin \cite{Guillemin96} and Zelditch \cite{Zelditch97} where the case of elliptic periodic orbits is studied, and Iantchenko-Sjöstrand-Zworski \cite{ISZ02} which generalize this to semi-classical Fourier integral operators. It could be interesting to know if our magnetic semi-classical Birkhoff normal forms can be recovered from the spectrum of $\Lh$.

In this paper, we only have investigated the spectral theory of the stationary Schrödinger equation with a pure magnetic field ; it would be interesting to describe the long-time dynamics of the full Schrödinger evolution, as was done in the Euclidean 2D case by Boil and V\~u Ng\d{o}c in \cite{Boil}.

\medskip

Finally, it would be interesting to study higher Landau levels and the effect of resonances in our normal forms, as was done by Charles and V\~u Ng\d{o}c in \cite{CharlesVuNgoc} for an electric Schrödinger operator $-\hbar^2 \Delta + V$.

\subsection{Structure of the paper}

In section \ref{sec.2} we prove Theorem \ref{thm.H.rond.Phi}, reducing the symbol $H$ of $\Lh$ on a neighborhood of $\Sigma = H^{-1}(0)$. In section \ref{sec.3} we construct the normal form, first in a space of formal series (section \ref{sec.3.formel}), and then the quantized version $\Nh$ (section \ref{sec.3.Quantized}). In section \ref{sec.4} we prove Theorem \ref{thm.main.first.bnf}. For this we describe the spectrum of $\Nh$ (section \ref{sec.4.Nh}), then we prove microlocalization properties on the eigenfunctions of $\Lh$ and $\Nh$ (section \ref{sec.4.microloc}), and finally we compare the spectra of $\Lh$ and $\Nh$ (section \ref{sec.4.conclusion}).

\medskip

In section \ref{Chap4-sec4} we focus on Theorem \ref{Chap4-Thm-spec-Nh0} which describes the spectrum of the effective operator $\Nh^{[1]}$. In \ref{Chap4-sec4-1} we reduce its symbol, in \ref{Chap4-sectionSecondFormalBNF} we construct a second formal Birkhoff normal form, and in \ref{Chap4-sec4-3} the quantized version $\Mh$. In \ref{Chap4-sec4-4} we compare the spectra of $\Nh^{[1]}$ and $\Mh$.

\medskip

Finally, sections \ref{sec.Coro1} and \ref{sec.Coro2} are dedicated to the proofs of Corollaries \ref{cor.reduction.Mh0} and \ref{cor.eigenvalue.asymptotics} respectively.

\section{Reduction of the principal symbol $H$} \label{sec.2}

\subsection{Notations}

$\Lh$ is a $\hbar$-pseudodifferential operator on $M$ with principal symbol $H$:
\[ H(q,p) = \vert p - A_q \vert^2_{g_q^*}\,, \quad p \in T_q^*M \,, \quad q \in M \,. \]
Here, $T^*M$ denotes the cotangent bundle of $M$, $p \in T_q^*M$ is a linear form on $T_qM$. The scalar product $g_q$ on $T_qM$ induces a scalar product $g_q^*$ on $T_q^*M$, and $\vert \cdot \vert_{g_q^*}$ denotes the associated norm. In this section we prove Theorem \ref{thm.H.rond.Phi}, reducing $H$ on a neighborhood of its minimum:
\[ \Sigma = \lbrace (q,p) \in T^*M\,, q \in \Omega \,, \quad p= A_q \rbrace \,. \]
Recall that $\Omega$ is a (small) neighborhood of $q_0 \in M \setminus \partial M$. We will construct canonical coordinates $(z,w,v) \in \R^{2d}$ with:
\[ z=(x,\xi) \in \R^{2s} \,, \quad w=(y,\eta) \in \R^{2s} \,, \quad v=(t,\tau) \in \R^{2k}\,. \]
$\R^{2d}$ is endowed with the canonical symplectic form
\[ \omega_0 = \dd \xi \wedge \dd x + \dd \eta \wedge \dd y + \dd \tau \wedge \dd t \,. \]
We will identify $\Sigma$ with
\[ \Sigma' = \lbrace (x,\xi,y,\eta,t,\tau) \in \R^{2d} \,, \quad x=\xi =0 \,, \quad \tau = 0 \rbrace = \R^{2s+k}_{(y,\eta,t)} \times \lbrace 0 \rbrace \,. \]

We will use several lemmas to prove Theorem \ref{thm.H.rond.Phi}. Before constructing the diffeomorphism $\Phi_1^{-1}$ on a neighborhood $U_1$ of $\Sigma$, we will restrict to $\Sigma$. Thus we need to understand the structure of $\Sigma$ induced by the symplectic structure on $T^*M$ (Section \ref{sec.2.Sigma}). Then we will construct $\Phi_1$ and finally prove Theorem \ref{thm.H.rond.Phi} (Section \ref{sec.2.Phi1}).

\subsection{Structure of $\Sigma$} \label{sec.2.Sigma}

Recall that on $T^*M$ we have the Liouville $1$-form $\alpha$ defined by
\[ \alpha_{(q,p)}( \mathcal{V}) = p( (\dd \pi)_{(q,p)} \mathcal{V}) \,, \quad \forall (q,p) \in T^*M \,, \quad \mathcal{V} \in T_{(q,p)}( T^*M) \,,\]
where $\pi : T^*M \rightarrow M$ is the canonical projection : $\pi(q,p) = q$, and $\dd \pi$ its differential. $T^*M$ is endowed with the symplectic form $\omega = \dd \alpha$. $\Sigma$ is a $d$-dimensional submanifold of $T^*M$ which can be identified with $\Omega$ using
\[ j : q \in \Omega \mapsto (q,A_q) \in \Sigma \,, \]
and its inverse, which is $\pi$. 

\begin{lemma}\label{lem.Sigma.B}
The restriction of $\omega$ to $\Sigma$ is $\omega_\Sigma = \pi^* B$.
\end{lemma}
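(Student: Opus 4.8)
**Proof plan for Lemma (restriction of $\omega$ to $\Sigma$ equals $\pi^* B$).**

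The plan is to express everything in terms of the Liouville form $\alpha$ and the embedding $j: q \mapsto (q, A_q)$. The key observation is that $j$ is a diffeomorphism from $\Omega$ onto $\Sigma$ with inverse $\pi|_\Sigma$, so the restriction $\omega_\Sigma$ is determined by its pullback under $j$: it suffices to show $j^*\omega = B$ as a $2$-form on $\Omega$, and then since $\pi \circ j = \mathrm{id}_\Omega$ we get $\omega_\Sigma = (\pi|_\Sigma)^* (j^*\omega_\Sigma) = (\pi|_\Sigma)^* B = \pi^* B$ on $\Sigma$.

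First I would compute $j^*\alpha$. For $X \in T_q\Omega$, one has $(\dd j)_q X \in T_{(q,A_q)}(T^*M)$, and by definition of the Liouville form $\alpha_{(q,A_q)}(\mathcal{V}) = (A_q)\big((\dd\pi)\mathcal{V}\big)$. Since $\pi \circ j = \mathrm{id}$, we get $(\dd\pi)(\dd j)_q X = X$, hence $(j^*\alpha)_q(X) = A_q(X)$, i.e. $j^*\alpha = A$ as $1$-forms on $\Omega$. Taking exterior derivatives and using that pullback commutes with $\dd$, $j^*\omega = j^*(\dd\alpha) = \dd(j^*\alpha) = \dd A = B$.

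Then I would conclude: for $\mathcal{V}, \mathcal{W} \in T_p\Sigma$ with $p \in \Sigma$, write $\mathcal{V} = (\dd j)(\dd\pi \mathcal{V})$ and likewise for $\mathcal{W}$ (valid because $j \circ \pi|_\Sigma = \mathrm{id}_\Sigma$, so $\dd j \circ \dd\pi$ is the identity on $T_p\Sigma$); then $\omega_\Sigma(\mathcal{V},\mathcal{W}) = (j^*\omega)(\dd\pi\,\mathcal{V}, \dd\pi\,\mathcal{W}) = B(\dd\pi\,\mathcal{V}, \dd\pi\,\mathcal{W}) = (\pi^* B)(\mathcal{V},\mathcal{W})$.

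There is essentially no hard part here — the only thing to be careful about is the direction of the identifications (distinguishing $j^*$ of a form on $T^*M$ from $\pi^*$ of a form on $\Omega$) and checking that $\dd j$ and $\dd\pi|_{T\Sigma}$ are genuinely mutually inverse, which follows from $\pi \circ j = \mathrm{id}_\Omega$ together with $j(\Omega) = \Sigma$. The content of the lemma is really just the classical fact that the graph of a closed (here exact) $1$-form, viewed as a section of $T^*M$, pulls the canonical symplectic form back to that $1$-form's differential.
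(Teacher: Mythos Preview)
Your proof is correct and follows essentially the same approach as the paper: both compute $j^*\alpha = A$ using $\pi \circ j = \mathrm{id}$, then use that $j$ and $\pi|_\Sigma$ are inverse diffeomorphisms to deduce $\omega_\Sigma = \pi^* B$. The only cosmetic difference is the order of operations (the paper writes $\alpha_\Sigma = \pi^* A$ first and then differentiates, whereas you differentiate to $j^*\omega = B$ first and then pull back by $\pi$), which is immaterial.
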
 

\begin{proof}
Fix $q \in \Omega$ and $Q \in T_qM$. Then
\[ (j^* \alpha)_q( Q ) = \alpha_{j(q)}( (\dd j) Q) = A_q((\dd \pi) \circ (\dd j) Q) = A_q(Q) \,, \]
because $\pi \circ j = \mathsf{id}$. Thus $j^*\alpha = A$ and $\alpha_\Sigma = \pi^* j^* \alpha = \pi^* A$. Taking the exterior derivative we get
\[ \omega_\Sigma = \dd \alpha_\Sigma = \pi^*( \dd A) = \pi^* B \,. \]
\end{proof}

Since $B$ is a closed $2$-form with constant rank equal to $2s$, $(\Sigma, \pi^*B)$ is a presymplectic manifold. It is equivalent to $(\Omega,B)$, using $j$. We recall the Darboux Lemma, telling that such a manifold is locally equivalent to $(\R^{2s+k}, \dd \eta\wedge \dd y)$.

\begin{lemma}\label{lem.Darboux}
Up to reducing $\Omega$, there exists an open subset $\Sigma'$ of $\R^{2s+k}_{(y,\eta,t)}$ and a diffeomorphism $\varphi : \Sigma' \rightarrow \Omega$ such that $\varphi^* B = \dd \eta \wedge \dd y$.
\end{lemma}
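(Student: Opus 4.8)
The plan is to prove this as a constant-rank Darboux theorem (the Darboux--Weinstein lemma), combining the Frobenius theorem applied to the kernel foliation of $B$ with the classical Darboux theorem on the transverse directions.

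\textbf{Step 1: the kernel distribution is integrable.} Since $B$ has constant rank $2s$ on $\Omega$, the pointwise kernels $\ker B_q \subset T_q\Omega$ form a smooth distribution $D$ of rank $k = d-2s$. I claim $D$ is involutive. Let $X,Y$ be local sections of $D$ and $Z$ any vector field. Cartan's formula for the closed form $B$ gives
\[ 0 = \dd B(X,Y,Z) = -B([X,Y],Z) + B([X,Z],Y) - B([Y,Z],X) + X\cdot B(Y,Z) - Y\cdot B(X,Z) + Z\cdot B(X,Y)\,, \]
and since $\iota_X B = \iota_Y B = 0$ all terms except $-B([X,Y],Z)$ vanish, so $B([X,Y],Z)=0$ for every $Z$, i.e. $[X,Y]$ is again a section of $D$. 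By the Frobenius theorem, after shrinking $\Omega$ there are coordinates $(u,t) = (u_1,\dots,u_{2s},t_1,\dots,t_k)$ on $\Omega$ such that $D = \vect(\partial_{t_1},\dots,\partial_{t_k})$.

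\textbf{Step 2: reduction to a symplectic form in $2s$ variables.} In these coordinates $\iota_{\partial_{t_\ell}}B = 0$ for all $\ell$, so $B = \sum_{i<j} b_{ij}(u,t)\,\dd u_i \wedge \dd u_j$ has no $\dd t_\ell$ component. Moreover $\mathcal{L}_{\partial_{t_\ell}} B = \dd(\iota_{\partial_{t_\ell}}B) + \iota_{\partial_{t_\ell}}\dd B = 0$, so the coefficients $b_{ij}$ do not depend on $t$. Hence $B = p^*\omega_0$, where $p\colon (u,t)\mapsto u$ is the projection onto an open set $V\subset \R^{2s}_u$ and $\omega_0 = \sum_{i<j} b_{ij}(u)\,\dd u_i\wedge \dd u_j$ is a closed $2$-form on $V$; because $B$ has rank $2s$, $\omega_0$ is nondegenerate, i.e. a symplectic form on $V$.

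\textbf{Step 3: classical Darboux and assembly.} Applying the classical Darboux theorem (Moser's trick) to $(V,\omega_0)$ near the image of $q_0$, there is, after shrinking $V$, a diffeomorphism $\psi\colon V''\subset \R^{2s}_{(y,\eta)}\to V$ with $\psi^*\omega_0 = \dd\eta\wedge\dd y$. Composing $\psi\times\mathrm{id}_{\R^k}$ with the inverse of the Frobenius chart yields a diffeomorphism $\varphi\colon \Sigma'\subset \R^{2s+k}_{(y,\eta,t)}\to\Omega$ with $\varphi^*B = (\psi\times\mathrm{id})^*p^*\omega_0 = \psi^*\omega_0 = \dd\eta\wedge\dd y$, which is the assertion. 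The only nonroutine points are the involutivity of $\ker B$ (where closedness of $B$ is used) and the $t$-independence of the reduced coefficients; everything afterwards is the textbook Darboux theorem. Alternatively one may simply invoke the Darboux--Weinstein theorem for presymplectic manifolds directly.
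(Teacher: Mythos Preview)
Your proof is correct. The paper does not actually prove this lemma: in the main text it is simply stated, and in the appendix the presymplectic Darboux theorem is recalled as a known result without proof. Your argument supplies the standard proof --- Frobenius on the kernel distribution (using closedness of $B$ for involutivity), reduction to a symplectic form on the leaf space, and classical Darboux --- which is exactly the textbook route the paper is implicitly invoking.
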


One can always take (any) coordinate system on $\Omega$. Up to working in these coordinates, it is enough to consider the case $M= \R^d$ with
\[ H(q,p) = \sum_{k,\ell =1}^d g^{k\ell}(q) (p_k - A_k(q))(p_\ell - A_{\ell}(q)) \,, \quad (q,p) \in T^* \R^d \simeq \R^{2d} \]
to prove Theorem \ref{thm.H.rond.Phi}. This is what we will do. In coordinates, $\omega$ is given by
\[ \omega = \dd p \wedge \dd q = \sum_{j=1}^d \dd p_j \wedge \dd q_j \]
and $\Sigma$ is the submanifold
\[ \Sigma = \lbrace (q, \A(q))\,, q \in \Omega \rbrace \subset \R^{2d} \,, \]
and $j \circ \varphi : \Sigma' \rightarrow \Sigma$.

\medskip

In order to extend $j \circ \varphi$ to a neighborhood of $\Sigma'$ in $\R^{2d}$ \textit{in a symplectic way}, it is convenient to split the tangent space $T_{j(q)} ( \R^{2d})$ according to tangent and normal directions to $\Sigma$. This is the purpose of the following two lemmas.

\begin{lemma}
Fix $j(q) = (q, \A(q)) \in \Sigma$. Then the tangent space to $\Sigma$ is 
\[ T_{j(q)} \Sigma = \lbrace (Q,P) \in \R^{2d}, \quad P = \nabla_q \A \cdot Q \rbrace\,. \]
Moreover, the $\omega$-orthogonal $T_{j(q)}\Sigma^\perp$ is
\[ T_{j(q)} \Sigma ^{\perp} = \lbrace (Q,P) \in \R^{2d}, \quad P = (\nabla_q \A)^T \cdot Q \rbrace \,.\]
Finally, 
\[ T_{j(q)} \Sigma \cap T_{j(q)} \Sigma^{\perp} = \mathsf{Ker}(\pi^* B) \,. \]
\end{lemma}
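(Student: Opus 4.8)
The plan is to carry out everything in the coordinates supplied by $\varphi$, so that $M = \R^d$, $\omega = \dd p \wedge \dd q$, and $\Sigma$ is the graph $\lbrace (q,\A(q))\,, q \in \Omega \rbrace$. First I would parametrize $\Sigma$ by $j : q \mapsto (q,\A(q))$; its differential at $q$ is $Q \mapsto (Q, \nabla_q \A \cdot Q)$, which gives the description of $T_{j(q)}\Sigma$ at once. For the $\omega$-orthogonal I would use the explicit pairing $\omega\big((Q,P),(Q',P')\big) = P \cdot Q' - P' \cdot Q$: a pair $(Q,P)$ lies in $T_{j(q)}\Sigma^{\perp}$ if and only if $P \cdot Q' = Q \cdot (\nabla_q \A \cdot Q') = \big((\nabla_q \A)^T \cdot Q\big) \cdot Q'$ for every $Q' \in \R^d$, i.e. if and only if $P = (\nabla_q \A)^T \cdot Q$. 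A dimension count ($T_{j(q)}\Sigma$ and $T_{j(q)}\Sigma^{\perp}$ are both $d$-dimensional inside the $2d$-dimensional symplectic space) confirms that these are the full subspaces.

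Next, for the intersection, I would just combine the two conditions: $(Q,P)$ lies in $T_{j(q)}\Sigma \cap T_{j(q)}\Sigma^{\perp}$ exactly when $\nabla_q \A \cdot Q = (\nabla_q \A)^T \cdot Q$, that is when $Q$ lies in the kernel of the antisymmetric matrix $(\nabla_q \A)^T - \nabla_q \A$, whose $(i,j)$-entry is $\partial_i A_j - \partial_j A_i$. This is precisely the matrix of the $2$-form $B_q = (\dd A)_q$ in the coordinate basis; in particular its kernel coincides with $\mathsf{Ker}\,\B(q)$ (the sign/transpose conventions matter for the matrix itself but not for its kernel, since $\B(q)$ differs from it only by the invertible metric factor). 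Hence $T_{j(q)}\Sigma \cap T_{j(q)}\Sigma^{\perp} = \lbrace (Q, \nabla_q \A \cdot Q)\,, B_q(Q,\cdot) = 0 \rbrace$.

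Finally I would match this with $\mathsf{Ker}(\pi^*B)$. By Lemma \ref{lem.Sigma.B} the restriction of $\omega$ to $\Sigma$ is $\omega_\Sigma = \pi^* B$, and $\mathsf{Ker}(\pi^*B)$ is meant as the kernel of this presymplectic form on $\Sigma$, a subspace of $T_{j(q)}\Sigma$. Since $\dd\pi$ restricts to the isomorphism $T_{j(q)}\Sigma \to \R^d$, $(Q, \nabla_q\A\cdot Q) \mapsto Q$, and $(\pi^*B)_{j(q)}(\mathcal V, \mathcal V') = B_q(\dd\pi\,\mathcal V,\dd\pi\,\mathcal V')$, the kernel of $\pi^*B$ corresponds under $\dd\pi$ to $\lbrace Q\,, B_q(Q,\cdot) = 0\rbrace$ — the same set that appeared above. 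This gives the claimed equality.

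The main point to be careful about is not a computation but the reading of the statement: $\mathsf{Ker}(\pi^*B)$ has to be understood as the kernel of $\pi^*B$ \emph{restricted to $\Sigma$} (equivalently, $\mathsf{Ker}\,\B(q)$ transported to $T_{j(q)}\Sigma$ via $\dd\pi$), not the kernel of $\pi^*B$ viewed as a degenerate $2$-form on the ambient $\R^{2d}$ — the latter would be the much larger space $\lbrace (Q,P)\,, Q \in \mathsf{Ker}\,\B(q)\rbrace$ of dimension $d + k$. Once this is fixed, all three formulas follow from the elementary linear algebra above.
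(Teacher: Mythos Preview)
Your proof is correct and follows essentially the same approach as the paper: the tangent space and its $\omega$-orthogonal are computed exactly as you do, via the graph parametrization and the explicit formula for $\omega$. For the intersection, the paper argues slightly more abstractly---it simply observes that $T_{j(q)}\Sigma \cap T_{j(q)}\Sigma^{\perp}$ is by definition the kernel of $\omega$ restricted to $T_{j(q)}\Sigma$, which is $\pi^*B$ by Lemma~\ref{lem.Sigma.B}---whereas you first compute the intersection explicitly via the antisymmetric matrix $(\nabla_q\A)^T - \nabla_q\A$ and then match it with $\mathsf{Ker}(\pi^*B)$; both routes are equivalent, yours being a bit more hands-on.
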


\begin{proof}
$\Sigma$ being the graph of $q \mapsto \A(q)$, its tangent space is the graph of the differential $Q \mapsto (\nabla_q \A) \cdot Q$. In order to caracterize $T \Sigma^\perp$, note that the symplectic form $\omega = \dd p \wedge \dd q$ is defined by
\[ \omega_{(q,p)}((Q_1,P_1),(Q_2,P_2)) = \langle P_2, Q_1 \rangle - \langle P_1, Q_2 \rangle \,, \]
where $\langle \cdot , \cdot \rangle$ denotes the usual scalar product on $\R^d$. Thus,
 \begin{align*}
(Q,P) \in T_{j(q)} \Sigma^{\perp} &\Longleftrightarrow \forall Q_0 \in \R^d, \quad \omega_{j(q)}((Q_0, \nabla_q \A \cdot Q_0), (Q,P))=0\\
&\Longleftrightarrow \forall Q_0 \in \R^d, \quad \langle P, Q_0 \rangle - \langle (\nabla_q \A) \cdot Q_0 , Q \rangle = 0\\
&\Longleftrightarrow \forall Q_0 \in \R^d, \quad \langle P -  (\nabla_q \A)^T \cdot Q , Q_0 \rangle =0\\
&\Longleftrightarrow P = (\nabla_q \A)^T \cdot Q.
\end{align*}
Finally, with Lemma \ref{lem.Sigma.B} we know that the restriction of $\omega$ to $T \Sigma$ is given by $\pi^*B$. Hence, $T_{j(q)} \Sigma \cap T_{j(q)} \Sigma^\perp$ is the set of $(Q,P) \in T_{j(q)} \Sigma$ such that
\[ \pi^* B((Q,P),(Q_0,P_0)) = 0 \,, \quad \forall (Q_0,P_0) \in T_{j(q)} \Sigma \,. \]
It is the kernel of $\pi^*B$.
\end{proof}

Now we define specific basis of $T_{j(q)} \Sigma$ and its orthogonal. Since $\B(q)$ is skew-symmetric with respect to $g$, there exist orthonormal vectors $\ub_1(q)\,, \vb_1(q)\,, \cdots \,, \ub_s(q) \,, \vb_s(q) \,, \wb_1(q) \,, \cdots \,, \wb_s(q) \in \R^d$ such that
\begin{equation}
\begin{cases}
\B \ub_j = - \beta_j \vb_j, \quad 1 \leq j \leq s, \\
\B \vb_j = \beta_j \ub_j, \quad 1 \leq j \leq s, \\
\B \wb_j = 0, \quad 1 \leq j \leq k.
\end{cases}
\end{equation}
Moreover, these vectors are smooth functions of $q$ because the non-zero eigenvalues $\pm i \beta_j(q)$ are simple. They define a basis of $\R^d$. Define the following $\omega$-orthogonal vectors to $\Sigma$:
\begin{equation}\label{Chap4-eq-def-fb}
\begin{cases}
\fb_j(q) := \frac{1}{\sqrt{\beta_j(q)}}( \ub_j(q), (\nabla_q \A)^T \cdot \ub_j(q)), \quad 1 \leq j \leq s,\\
\fb_j'(q) := \frac{1}{\sqrt{\beta_j(q)}}( \vb_j(q), (\nabla_q \A)^T \vb_j(q)), \quad 1 \leq j \leq s.
\end{cases}
\end{equation}
These vectors are linearly independent and
\[T_{j(q)} \Sigma^{\perp} =  K \oplus F \,, \]
with
\[ K = \mathsf{Ker}(\pi^* B), \quad F = \mathsf{Vect}( \fb_1, \fb_1', \cdots, \fb_s, \fb_s') \,. \]
Similarily, the tangent space $T_{j(q)} \Sigma$ admits a decomposition
\[ T_{j(q)} \Sigma = E  \oplus K \]
defined as follows. The map $\varphi : \Sigma' \rightarrow \Sigma$ from Lemma \ref{lem.Darboux} satisfies $\varphi^*( \pi^* B) = \dd \eta \wedge \dd y$. Thus its differential maps the kernel of $B$ on the kernel of $\dd \eta \wedge \dd y$:
\[ K = = \lbrace (\dd \varphi)_q ( 0, T) ; \quad T \in \R^k \rbrace \]
A complementary space of $K$ in $T \Sigma$ is given by
\begin{equation}
E := \lbrace (\dd \varphi)_q (W, 0) ; \quad W \in \R^{2s} \rbrace.
\end{equation}

\begin{lemma}
Fix $j(q) = (q,\A(q)) \in \Sigma$. Then we have the following decomposition:
{\setstretch{0.3}
$$T_{j(q)}(\R^{2d})=\begin{array}{c}
\ \hspace{0.2cm}T \Sigma^{\perp}\\
\ \hspace{0.2cm}\overbrace{}{}\\
E\oplus K\oplus F\oplus L\\
\underbrace{}{}\hspace{1.5cm}\ \\
T \Sigma \hspace{1.5cm}\ \\ 
\end{array}$$
}
where $L$ is a Lagrangian complement of $K$ in $(E \oplus F)^\perp$.
\end{lemma}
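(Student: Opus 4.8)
The plan is to obtain the decomposition by pure linear symplectic algebra at the single point $j(q)$, using only what has been established above: $T_{j(q)}\Sigma = E\oplus K$, $T_{j(q)}\Sigma^\perp = K\oplus F$, and $T_{j(q)}\Sigma\cap T_{j(q)}\Sigma^\perp = K = \mathsf{Ker}(\pi^*B)$. Write $V = T_{j(q)}(\R^{2d})$ with its symplectic form $\omega$. First I would record the dimensions, $\dim E = \dim F = 2s$, $\dim K = k$ and $\dim V = 2d = 4s+2k$, so that it will suffice to produce an isotropic subspace $L\subset(E\oplus F)^\perp$ of dimension $k$ transverse to $K$ there: then $L$ has the correct dimension and an easy transversality check yields $V = E\oplus K\oplus F\oplus L$.

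The key step is to show that $E\oplus F$ is a \emph{symplectic} subspace of $V$. The sum is direct since $E\cap F\subset T_{j(q)}\Sigma\cap T_{j(q)}\Sigma^\perp = K$ and $E\cap K = 0$, and $\omega(E,F)=0$ because $E\subset T_{j(q)}\Sigma$ and $F\subset T_{j(q)}\Sigma^\perp$; so it is enough that $\omega$ restrict nondegenerately to each factor. For $E$: the radical of $\omega|_{T_{j(q)}\Sigma}$ is $T_{j(q)}\Sigma\cap T_{j(q)}\Sigma^\perp = K$, and $E$ is a complement of $K$ in $T_{j(q)}\Sigma$, so $\omega|_E$ is nondegenerate. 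For $F$: the double $\omega$-orthogonal gives $(T_{j(q)}\Sigma^\perp)^\perp = T_{j(q)}\Sigma$, so the radical of $\omega|_{T_{j(q)}\Sigma^\perp}$ is again $T_{j(q)}\Sigma^\perp\cap T_{j(q)}\Sigma = K$, and $F$ is a complement of $K$ in $T_{j(q)}\Sigma^\perp$, so $\omega|_F$ is nondegenerate; together with $\omega(E,F)=0$ this makes $\omega|_{E\oplus F}$ nondegenerate. (Alternatively, $\omega|_F$ can be computed directly on the basis $(\fb_j,\fb_j')$: from $\B\ub_j = -\beta_j\vb_j$, $\B\vb_j = \beta_j\ub_j$ and $B_q(X,Y) = g_q(X,\B(q)Y)$ one finds $\omega(\fb_i,\fb_j') = -\delta_{ij}$ and $\omega(\fb_i,\fb_j) = \omega(\fb_i',\fb_j') = 0$, but the abstract argument is shorter.)

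Granting this, $V = (E\oplus F)\oplus(E\oplus F)^\perp$ with $(E\oplus F)^\perp$ symplectic of dimension $2d-4s = 2k$, and $K\subset(E\oplus F)^\perp$ since $\omega(K,E)=0$ ($K,E\subset T_{j(q)}\Sigma$ and $K$ is the radical of $\omega|_{T_{j(q)}\Sigma}$) and $\omega(K,F)=0$ ($K\subset T_{j(q)}\Sigma$, $F\subset T_{j(q)}\Sigma^\perp$); in particular $E\oplus K\oplus F$ is direct. Moreover $K$ is isotropic, being contained in $T_{j(q)}\Sigma\cap T_{j(q)}\Sigma^\perp$, and $\dim K = k = \tfrac{1}{2}\dim(E\oplus F)^\perp$, so $K$ is Lagrangian in $(E\oplus F)^\perp$. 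I then invoke the standard fact that a Lagrangian subspace of a symplectic vector space admits a Lagrangian complement, pick $L\subset(E\oplus F)^\perp$ Lagrangian with $(E\oplus F)^\perp = K\oplus L$, and conclude $V = (E\oplus F)\oplus(K\oplus L) = E\oplus K\oplus F\oplus L$, with $L$ a Lagrangian complement of $K$ in $(E\oplus F)^\perp$. I do not expect a genuine obstacle here; the only point needing care is the bookkeeping that identifies $K$ with the radical of $\omega$ restricted to $T_{j(q)}\Sigma$ and to $T_{j(q)}\Sigma^\perp$, which comes from $(W^\perp)^\perp = W$ and the already-proven identity $T_{j(q)}\Sigma\cap T_{j(q)}\Sigma^\perp = \mathsf{Ker}(\pi^*B)$. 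If later constructions require $L$ to vary smoothly with $q$, one can take $L = JK$ for a $g$-compatible complex structure $J$ on $(E\oplus F)^\perp$.
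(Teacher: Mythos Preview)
Your proposal is correct and follows essentially the same route as the paper: establish that $E\oplus F$ is a symplectic subspace (the paper does this by noting that the radical of $\omega$ on $T\Sigma + T\Sigma^\perp = E\oplus K\oplus F$ is $K$, you do it more explicitly via $\omega(E,F)=0$ and nondegeneracy on each factor), deduce that $(E\oplus F)^\perp$ is symplectic of dimension $2k$ containing $K$ as a Lagrangian, and then pick a Lagrangian complement $L$. Your write-up is in fact more careful about the bookkeeping than the paper's; the only quibble is the sign in your parenthetical computation $\omega(\fb_i,\fb_j')=-\delta_{ij}$, which the paper gets as $+\delta_{ij}$, but this does not affect the argument.
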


\begin{proof}
We have $T \Sigma + T \Sigma^{\perp} = E \oplus K \oplus F,$ and the restriction of $\omega = \dd p \wedge \dd q$ to this space has kernel $K = T \Sigma \cap T \Sigma^\perp$. Hence, the restriction $\omega_{E \oplus F}$ of $\omega$ to $E \oplus F$ is non-degenerate. So is its orthogonal $(E \oplus F)^\perp$, which is a symplectic vector space for $\omega$, with dimension $2s-4s=2k$, and we have:
$$T_{j(q)} \R^{2d} = (E \oplus F) \oplus (E \oplus F)^{\perp}.$$
$K$ is a Lagrangian subspace of $(E \oplus F)^\perp$. It admits a complementary Lagrangian : A subspace $L$ of $(E \oplus F)^\perp$ with dimension $k$ such that $\omega_L =0$, and $(E \oplus F)^\perp = K \oplus L$.
\end{proof}

\begin{remark}
A basis $(\gb_j)$ of $L$ is defined by:
\begin{equation}\label{eq.gbj}
\omega( \gb_j, \dd \varphi(0,T)) = T_j \,, \quad \forall T \in \R^k \,.
\end{equation}
Indeed, the decomposition $(E \oplus F)^{\perp} = K \oplus L$ yield a bijection between $L$ and the dual $K^*$. This bijection is $\gb \mapsto \omega( \gb, \cdot )$. The linear form
\[ \dd \varphi(0,T) \in K \mapsto T_j \in \R \]
is represented by $\gb_j$.
\end{remark}

\subsection{Construction of $\Phi_1$ and proof of Theorem \ref{thm.H.rond.Phi}}\label{sec.2.Phi1}

We identified the "curved" manifold $\Sigma$ with an open subset $\Sigma'$ of $\R^{2s+k}$ using $j \circ \varphi$. Moreover, we did this in such a way that $(j \circ \varphi)^*B = \dd \eta \wedge \dd y$. In this section we prove that we can identify a whole neighborhood of $\Sigma$ in $\R^{2d}_{(q,p)}$ with a neighborhood of $\Sigma'$ in $\R^{4s+2k}_{(z,w,v)}$, via a symplectomorphism $\Phi_{\un}$.

\begin{center}
\begin{tikzpicture}[scale=1]

\node (5) at (11/2,0.8) [] {};
\node (6) at (9,0.8) [] {};
\node (7) at (6,0.3) [] {};
\node (8) at (6,3) [] {};
\draw[->] (5) -- (6);
\draw[->] (7) -- (8);
\node at (12/2,13/4) [] {\begin{tiny}$p\in\mathbf{R}^d$\end{tiny}};
\node at (9,0.55) [] {\begin{tiny}$q\in\mathbf{R}^d$\end{tiny}};
\node (B1) at (11/2,7/4) [] {};
\node (B2) at (27/4,7/4) [shape=coordinate] {};
\node (B3) at (8,7/4) [] {};
\node at (8,2) [] {$\Sigma$};
\draw[very thick,bend left=30] (B1) to (B2);
\draw[very thick,bend right=30] (B2) to (B3);
\draw[<-,thick] (19/2,2) -- (22/2,2);
\node at (41/4,5/2) [] {$\Phi_\un$};
\node (9) at (23/2,0.8) [] {};
\node (10) at (15,0.8) [] {};
\node (11) at (12,0.3) [] {};
\node (12) at (12,3) [] {};
\draw[->,very thick] (9) -- (10);
\draw[->] (11) -- (12);
\node at (25/2,13/4) [] {\begin{tiny}$(z,\tau)\in\mathbf{R}^{2s+k}$\end{tiny}};
% \node at (15,-1/4) [] {\begin{tiny}$(w,t)\in\mathbf{R}^{2s+k}$\end{tiny}};
\node at (15,3/8) [] {$\Sigma'$ \begin{tiny}$= \lbrace (w,t)\in\mathbb{R}^{2s+k} \rbrace$\end{tiny}};
\end{tikzpicture}
\end{center}

\begin{lemma}
There exists a diffeomorphism $$\Phi_{\mathbf{1}} : U_{\mathbf{1}}' \subset \R^{2s+2k+2s}_{(w,t,\tau,z)} \rightarrow U_{\mathbf{1}} \subset \R^{2d}_{(q,p)},$$
such that $\Phi_{\mathbf{1}}^* \omega = \omega_0$ and $\Phi_{\mathbf{1}}(w,t,0,0) = j \circ \varphi(w,t)$ ; and such that its differential at $(w,t,\tau=0,z=0) \in \Sigma'$ is
$$\dd \Phi_{\mathbf{1}}(W,T,\mathcal{T},Z) = \dd_{(w,t)} j \circ \varphi (W,T) + \sum_{j=1}^k \mathcal{T}_j \hat{\gb}_j(w,t) + \sum_{j=1}^s X_j \hat{\fb}_j(w,t) + \Xi_j \hat{\fb}'_j(w,t).$$
\end{lemma}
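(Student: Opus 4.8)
The strategy is the classical Moser/Weinstein trick for symplectic tubular neighborhoods, adapted to the presymplectic situation at hand. First I would build a candidate diffeomorphism with the prescribed $1$-jet along $\Sigma'$, and then I would correct it by a Moser deformation argument to make it an exact symplectomorphism while preserving the $1$-jet.

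\emph{Step 1: the linear model and the map at first order.} Using the decomposition $T_{j(q)}(\R^{2d}) = E \oplus K \oplus F \oplus L$ from the previous lemma, together with the explicit bases $(\hat\fb_j, \hat\fb_j')$ of $F$ (from \eqref{Chap4-eq-def-fb}) and $(\hat\gb_j)$ of $L$ (from \eqref{eq.gbj}), I would define a map $\Phi_{\mathbf 0}$ on a neighborhood of $\Sigma'$ by setting, for $(w,t) \in \Sigma'$ and small $(z,\tau)$,
\[ \Phi_{\mathbf 0}(w,t,\tau,z) = j\circ\varphi(w,t) + \sum_{j=1}^k \tau_j\, \hat\gb_j(w,t) + \sum_{j=1}^s \big( x_j\, \hat\fb_j(w,t) + \xi_j\, \hat\fb_j'(w,t)\big)\,. \]
By construction $\Phi_{\mathbf 0}(w,t,0,0) = j\circ\varphi(w,t)$, and its differential at a point of $\Sigma'$ is exactly the expression displayed in the statement. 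One checks, using Lemma \ref{lem.Darboux} (which gives $(j\circ\varphi)^*B = \dd\eta\wedge\dd y$), the orthogonality relations defining $F$, $L$, and the normalization \eqref{eq.gbj}, that $(\dd\Phi_{\mathbf 0})^*\omega = \omega_0$ holds \emph{at every point of $\Sigma'$}. Hence $\Phi_{\mathbf 0}$ is a local diffeomorphism near $\Sigma'$ (its differential is invertible along $\Sigma'$) which pulls $\omega$ back to a closed $2$-form $\omega_1 := \Phi_{\mathbf 0}^*\omega$ that agrees with $\omega_0$ on $\Sigma'$.

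\emph{Step 2: Moser deformation.} Now I am reduced to finding a diffeomorphism $\psi$ of a neighborhood of $\Sigma'$, equal to the identity to first order along $\Sigma'$, with $\psi^*\omega_1 = \omega_0$; then $\Phi_{\mathbf 1} := \Phi_{\mathbf 0}\circ\psi$ does the job, since composing with $\psi$ (which has trivial $1$-jet on $\Sigma'$) does not change the differential along $\Sigma'$. Set $\omega_\theta = \omega_0 + \theta(\omega_1-\omega_0)$ for $\theta\in[0,1]$. Since $\omega_1-\omega_0$ is closed and vanishes on $\Sigma'$ together with its would-be primitive's linear part, a relative Poincaré lemma gives a $1$-form $\kappa$, vanishing to second order along $\Sigma'$, with $\dd\kappa = \omega_1-\omega_0$; in particular each $\omega_\theta$ is nondegenerate near $\Sigma'$ (shrinking the neighborhood) because $\omega_1-\omega_0$ vanishes on $\Sigma'$. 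Define the time-dependent vector field $X_\theta$ by $\iota_{X_\theta}\omega_\theta = -\kappa$; since $\kappa$ vanishes to second order on $\Sigma'$, $X_\theta$ vanishes (to first order) on $\Sigma'$, so its flow $\psi_\theta$ fixes $\Sigma'$ pointwise, is defined for all $\theta\in[0,1]$ on a possibly smaller neighborhood, and has trivial differential along $\Sigma'$. The standard Cartan-formula computation
\[ \frac{\dd}{\dd\theta}\,\psi_\theta^*\omega_\theta = \psi_\theta^*\big(\mathcal L_{X_\theta}\omega_\theta + \tfrac{\dd}{\dd\theta}\omega_\theta\big) = \psi_\theta^*\big(\dd\iota_{X_\theta}\omega_\theta + (\omega_1-\omega_0)\big) = \psi_\theta^*\big(-\dd\kappa + \dd\kappa\big) = 0 \]
shows $\psi_1^*\omega_1 = \omega_0$, so $\psi := \psi_1$ is the desired correction.

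\emph{Main obstacle.} The delicate point is the \emph{relative} Poincaré lemma: I must produce the primitive $\kappa$ of $\omega_1-\omega_0$ vanishing to \emph{second} order along $\Sigma'$ — not merely along a point — so that the resulting vector field $X_\theta$ vanishes to first order on all of $\Sigma'$ and its flow therefore preserves $\Sigma'$ and has trivial $1$-jet there. This is what guarantees that the correction $\psi$ does not disturb the carefully arranged differential of $\Phi_{\mathbf 0}$. One obtains it from the homotopy operator associated to a tubular-neighborhood retraction onto $\Sigma'$, keeping track of orders of vanishing in the normal variables $(z,\tau)$; the fact that $\omega_1-\omega_0$ already vanishes \emph{on} $\Sigma'$ (Step 1) is exactly what upgrades the naive first-order vanishing of $\kappa$ to second order. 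A secondary, purely bookkeeping, issue is checking in Step 1 that the off-diagonal blocks of $(\dd\Phi_{\mathbf 0})^*\omega$ along $\Sigma'$ vanish — e.g. that $E\oplus K$ is $\omega$-orthogonal to $F$, that $\omega(E,L)=\omega(K,L\ominus\text{its }\hat\gb\text{ part})=0$, and that $\omega(\hat\gb_j,\dd\varphi(0,T))=T_j$ — all of which follow directly from the defining relations \eqref{Chap4-eq-def-fb}, \eqref{eq.gbj} and the orthogonal decomposition lemma.
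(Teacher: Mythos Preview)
Your proposal is correct and follows essentially the same route as the paper: define the candidate $\Phi_{\mathbf 0}$ by the explicit affine-in-$(z,\tau)$ formula, verify by direct computation on the basis vectors $\hat\fb_j,\hat\fb_j',\hat\gb_j,\dd(j\circ\varphi)$ that $\Phi_{\mathbf 0}^*\omega=\omega_0$ along $\Sigma'$, and then invoke the Darboux--Weinstein lemma (Theorem~\ref{thm.Weinstein} with $\alpha=1$, whose proof is precisely the Moser argument you spell out) to correct $\Phi_{\mathbf 0}$ by a term of order $\grandO((z,\tau)^2)$. The paper carries out the pairwise $\omega$-computations explicitly rather than deferring them to a ``bookkeeping'' remark, but the structure of the argument is identical.
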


\begin{remark}
In this lemma we used the notations $Z = (X,\Xi)$ and $\hat \gb_j = \gb_j \circ \varphi$, $\hat \fb_j = \fb_j \circ \varphi$, $\hat \fb_j' = \fb_j' \circ \varphi$.
\end{remark}

\begin{proof}
We will first construct $\Phi$ such that $\Phi^* \omega_{\vert \Sigma '} = \omega_{0 \ \vert \Sigma'}$ only on $\Sigma' = \Phi^{-1}(\Sigma)$. Then, we will use to Weinstein theorem to slightly change $\Phi$ into $\Phi_{\mathbf{1}}$ such that $\Phi_\un ^* \omega = \omega_0$ on a neighborhood of $\Sigma'$.

\medskip

We define $\Phi$ by:
\[ \Phi(w,t,\tau,z) = j \circ \varphi (w,t) + \sum_{j=1}^k \tau_j \hat{\gb}_j(w,t) + \sum_{j=1}^s x_j \hat{\fb}_j(w,t) + \xi_j \hat{\fb}_j'(w,t)\,. \]
Its differential at $(w,t,0,0)$ has the desired form. Let us fix a point $(w,t,0,0) \in \Sigma'$ and compute $\Phi^* \omega$ at this point. By definition,
\[ \Phi^* \omega_{(w,t,0,0)}(\bullet, \bullet) = \omega_{j(q)}( (\dd \Phi) \bullet, (\dd \Phi) \bullet), \]
where $q=\varphi(w,t)$. Computing this $2$-form in the canonical basis of $\R^{4s+2k}$, amounts to computing $\omega$ on the vectors $\gb_j$, $\fb_j$, $\fb_j'$ and $\dd (j \circ \varphi)(W,T)$. By definition of $\fb_j$ we have
\begin{align*}
\omega(\fb_i, \fb_j) &= \frac{1}{\sqrt{\beta_i \beta_j}} \left( \langle (\nabla_q \A)^{\perp} \cdot \ub_j , \ub_i \rangle - \langle (\nabla_q \A)^{\perp} \cdot \ub_i, \ub_j \rangle \right)\\
&= \frac{1}{\sqrt{\beta_i \beta_j}} \langle (\nabla_q \A)^{\perp} - (\nabla_q \A)) \cdot \ub_j , \ub_i \rangle \\
&= \frac{1}{\sqrt{\beta_i \beta_j}} B(\ub_j, \ub_i) \\
&= \frac{1}{\sqrt{\beta_i \beta_j}} g( \ub_j , \B \ub_i) \\
&=0,
\end{align*}
because $\B \ub_i = - \beta_i \vb_i$ is orthogonal to $\ub_j$. Similarily we get
\[\omega(\fb_i,\fb_j') = \delta_{ij}, \quad \omega(\fb_i',\fb_j') = 0\,. \]
Moreover, $\gb_i \in L \subset F^\perp$ so
\[ \omega( \gb_i, \fb_j) = \omega (\gb_i, \fb_j') = 0 \,. \]
Since $L$ is Lagrangian we also have $\omega( \gb_i , \gb_j) =0$. The vector $\dd (j \circ \varphi)(W,T)$ is tangent to $\Sigma$ and $\fb_j$, $\fb_j' \in T \Sigma'$ so
\[ \omega( \fb_j, \dd (j \circ \varphi)(W,T)) = \omega( \fb_j', \dd (j\circ \varphi)(W,T)) = 0 \,. \]
Since $\gb_i \in L \subset E^\perp$ and using \eqref{eq.gbj} we have
\[ \omega(\gb_j, \dd (j \circ \varphi)(W,T))= \omega( \gb_j, \dd (j \circ \varphi) (0,T)) = T_j \,. \]
Finally, $(j\circ \varphi)^* \omega = \varphi^* B = \dd \eta \wedge \dd y$ donc
$$\omega( \dd (j \circ \varphi)(W,T), \dd (j \circ \varphi)(W',T')) = \dd \eta \wedge \dd y ((W,T),(W',T')) \,.$$
All these computations show that $(\Phi^* \omega)_{(w,t,0,0)}$ coincide with $\omega_0 = \dd \xi \wedge \dd x + \dd \eta \wedge \dd y + \dd \tau \wedge \dd t$. Thus $\Phi^* \omega = \omega_0$ on $\Sigma$. With Weinstein theorem \ref{thm.Weinstein}, we can change $\Phi$ into $\Phi_\un(w,t,\tau,z) = \Phi(w,t,\tau,z) + \grandO((z,\tau)^2)$ such that $\Phi_\un^* \omega = \omega_0$ on a neighborhood $U_\un'$ of $\Sigma'$. In particular, the differential of $\Phi_\un$ at $(w,t,0,0)$ coincides with the differential of $\Phi$.
\end{proof}

Finally, the following lemma concludes the proof of Theorem \ref{thm.H.rond.Phi}.

\begin{lemma}
The Hamiltonian $\widehat{H} = H \circ \Phi_{\mathbf{1}}$ has the following Taylor expansion:
$$\widehat{H}(w,t,\tau,x,\xi) = \langle \partial^2_{\tau} \widehat{H}(w,t,0) \tau, \tau \rangle + \sum_{j=1}^s \widehat{\beta}_j(w,t) (\xi_j^2  + x_j^2) + \grandO( (\tau,x,\xi)^3 ).$$
\end{lemma}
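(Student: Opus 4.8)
The approach is to exploit two facts already established: $H$ is a sum of squares vanishing exactly on $\Sigma$, and the differential of $\Phi_{\un}$ along $\Sigma'$ is explicitly known. The first step is to reduce the claim to a transverse Hessian computation. In the coordinates of Section~\ref{sec.2.Sigma} one has $H(q,p)=|p-\A(q)|_{g_q^*}^2$, a nonnegative function with $H^{-1}(0)\cap T^*\Omega=\Sigma$; hence $\Sigma$ consists of minima of $H$ and $\dd H$ vanishes on $\Sigma$. Since $\Phi_{\un}$ maps $\Sigma'=\{x=\xi=0,\ \tau=0\}$ diffeomorphically onto $\Sigma$, the pulled-back Hamiltonian $\widehat H=H\circ\Phi_{\un}$ is nonnegative and vanishes together with its differential along $\Sigma'$. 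Taylor's formula in the transverse variables $(x,\xi,\tau)$, with remainder uniform over a relatively compact neighbourhood of $\Sigma'$ (we have localised near $q_0$), then gives $\widehat H=Q_{(w,t)}(x,\xi,\tau)+\grandO((x,\xi,\tau)^3)$, where $Q_{(w,t)}$ is the degree-two part of $\widehat H$ in $(x,\xi,\tau)$, smooth in $(w,t)$. Because $\dd H$ vanishes on $\Sigma$, the chain rule identifies $Q_{(w,t)}(x,\xi,\tau)=\tfrac12\Hess_{j(q)}H(V,V)$, with $q=\varphi(w,t)$ and $V$ the push-forward of $(x,\xi,\tau)$ by $\dd\Phi_{\un}$; the $\grandO((x,\xi,\tau)^2)$-correction to $\Phi_{\un}$ produced by Weinstein's theorem in the previous lemma only affects the cubic remainder.

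Next I would compute $\Hess H$ and substitute $\dd\Phi_{\un}$. A direct differentiation of $H=|p-\A(q)|_{g_q^*}^2$ — in which the $q$-dependence of the metric drops out, since $p-\A(q)$ vanishes on $\Sigma$ — yields $\tfrac12\Hess_{j(q)}H((Q,P),(Q,P))=|P-\nabla_q\A\cdot Q|_{g_q^*}^2=:|\ell(Q,P)|_{g_q^*}^2$, where the linear map $\ell$ satisfies $\ker\ell=T_{j(q)}\Sigma$. By the preceding lemma $V=\sum_{j=1}^s(x_j\hat{\fb}_j+\xi_j\hat{\fb}'_j)+\sum_{j=1}^k\tau_j\hat{\gb}_j$, so $Q_{(w,t)}=|\ell(V)|_{g_q^*}^2$ and everything reduces to expanding this square into a $(\hat{\fb},\hat{\fb}')$-block, a $\hat{\gb}$-block and cross terms. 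Using the eigenrelations $\B\ub_j=-\beta_j\vb_j$, $\B\vb_j=\beta_j\ub_j$ and the $g$-skew-symmetry of $\B$ one computes $\ell(\hat{\fb}_j)$ and $\ell(\hat{\fb}'_j)$ explicitly in terms of $\hat{\vb}_j$ and $\hat{\ub}_j$; since the metric identification $T_qM\to T_q^*M$ is an isometry onto $(T_q^*M,g_q^*)$ and the $\hat{\ub}_j,\hat{\vb}_j$ are $g_q$-orthonormal, the $(\hat{\fb},\hat{\fb}')$-block comes out diagonal, equal to $\sum_{j=1}^s\widehat{\beta}_j(w,t)(x_j^2+\xi_j^2)$ — the cyclotron oscillator. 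The $\hat{\gb}$-block is the quadratic form $\tau\mapsto\bigl|\sum_j\tau_j\ell(\hat{\gb}_j)\bigr|_{g_q^*}^2$, which by definition equals $\langle\partial_\tau^2\widehat H(w,t,0)\tau,\tau\rangle$.

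The step I expect to be the main obstacle is showing that the two blocks decouple, $\langle\ell(\hat{\gb}_i),\ell(\hat{\fb}_j)\rangle_{g_q^*}=\langle\ell(\hat{\gb}_i),\ell(\hat{\fb}'_j)\rangle_{g_q^*}=0$, and that the $\hat{\gb}$-block is positive definite; this is where the symplectic geometry of Section~\ref{sec.2.Sigma} genuinely enters. From the explicit formulas for $\ell(\hat{\fb}_j),\ell(\hat{\fb}'_j)$ together with $\omega((Q_1,P_1),(Q_2,P_2))=\langle P_2,Q_1\rangle-\langle P_1,Q_2\rangle$ one checks, for every $V$, the identities $\langle\ell(V),\ell(\hat{\fb}_j)\rangle_{g_q^*}=\widehat{\beta}_j\,\omega(V,\hat{\fb}'_j)$ and $\langle\ell(V),\ell(\hat{\fb}'_j)\rangle_{g_q^*}=-\widehat{\beta}_j\,\omega(V,\hat{\fb}_j)$. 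Taking $V=\hat{\gb}_i$ and using $\hat{\gb}_i\in L\subset F^\perp$ (with $F=\vect(\hat{\fb}_1,\hat{\fb}'_1,\dots,\hat{\fb}_s,\hat{\fb}'_s)$, as in the preceding lemma) makes both right-hand sides vanish, giving the decoupling; taking $V=\hat{\fb}_i$ or $\hat{\fb}'_i$ together with the relations $\omega(\hat{\fb}_i,\hat{\fb}'_j)=\delta_{ij}$, $\omega(\hat{\fb}_i,\hat{\fb}_j)=\omega(\hat{\fb}'_i,\hat{\fb}'_j)=0$ already established re-derives the diagonal $(\hat{\fb},\hat{\fb}')$-block. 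Finally, $\langle\partial_\tau^2\widehat H(w,t,0)\tau,\tau\rangle=0$ would force $\sum_j\tau_j\hat{\gb}_j\in\ker\ell=T_{j(q)}\Sigma$; since $L\cap T_{j(q)}\Sigma=0$ (immediate from the direct-sum decomposition of Section~\ref{sec.2.Sigma}) and the $\hat{\gb}_j$ are linearly independent, this forces $\tau=0$, so $\partial_\tau^2\widehat H(w,t,0)$ is positive definite. Assembling the three blocks gives the asserted expansion. Everything apart from this decoupling is routine: the decoupling itself rests entirely on the choice of $L$ as an $\omega$-orthogonal complement of $F$ made in Section~\ref{sec.2.Sigma}.
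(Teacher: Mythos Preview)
Your proposal is correct and follows essentially the same route as the paper: both reduce to the transverse Hessian via $H\ge 0$ and $H|_\Sigma=0$, evaluate $\nabla^2_{j(q)}H$ on the frame $(\hat\fb_j,\hat\fb'_j,\hat\gb_j)$, and obtain the decoupling from $L\subset F^\perp$ by linking the Hessian pairing to $\omega$---the paper packages this last step as the set equality $F^\perp=(T\Sigma^\perp)^{\perp H}$, whereas you write out the equivalent pointwise identities $\langle\ell(V),\ell(\hat\fb_j)\rangle_{g_q^*}=\widehat\beta_j\,\omega(V,\hat\fb'_j)$, etc. Your extra paragraph showing that $\partial_\tau^2\widehat H(w,t,0)$ is positive definite via $L\cap T_{j(q)}\Sigma=0$ is a clean addition that the paper asserts in Theorem~\ref{thm.H.rond.Phi} but does not argue within this lemma.
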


\begin{proof}
Let us compute the differential (and Hessian) of
\[ H(q,p) = \sum_{k, \ell =1}^d g^{k \ell}(q) (p_k - A_k(q))(p_{\ell}- A_{\ell}(q)) \]
at a point $(q,\A(q)) \in \Sigma$. First,
\begin{equation}
\nabla_{(q,p)} H \cdot (Q,P) = \sum_{k, \ell =1}^d 2 g^{k\ell}(q) (p_k - A_k(q))(P_{\ell} - \nabla_q A_{\ell} \cdot Q) + (p_k - A_k(q))(p_{\ell} - A_{\ell}(q)) \nabla_q g \cdot Q,
\end{equation}
and at $p= \A(q)$ the Hessian is:
\begin{equation}\label{Chap4-eq-Hessienne-de-H}
\langle \nabla^2_{j(q)} H \cdot (Q,P), (Q',P') \rangle = 2 \sum_{k, \ell = 1}^d g^{k \ell}(q)(P_k - \nabla_q A_k \cdot Q) (P_{\ell}' - \nabla_q A_{\ell} \cdot Q').
\end{equation}
We can deduce a Taylor expansion of $\widehat{H}(w,t,\tau,z)$ with respect to $(\tau,z)$ (with fixed $(q,\A(q)) = j \circ \varphi(w,t)$). First,
\[ \widehat{H}(w,t,0,0) = H(q,\A(q)) = 0 \,. \]
Then we can compute the (partial) differential:
\[ \partial_{\tau, z} \widehat{H}(w,t,0,0) \cdot (W,T) = \nabla_{j(q)} H \cdot \partial_{\tau,z} \Phi_{\mathbf{1}}(w,t,0,0) \cdot (W,T) = \nabla_{j(q)} H \cdot \dd (j\circ \varphi)(W,T) = 0 \,, \]
because $\dd (j \circ \varphi)(W,T) \in T_{j(q)} \Sigma$. The Taylor expansion of $\widehat{H}$ is thus:
\[\widehat{H}(w,t,\tau,z) = \frac{1}{2}\langle \partial^2_{\tau,z} \widehat{H}(w,t,0) \cdot (\tau, z), (\tau,z) \rangle + \grandO((\tau,z)^3)\,, \]
where $\partial_{\tau,z}^2 \widehat{H}$ is the partial Hessian with respect to $(\tau,z)$. We have
\[\partial^2_{\tau,z} \widehat{H} = (\partial_{(\tau,z)} \Phi_{\mathbf{1}})^T \cdot \nabla_{j(q)}^2 H \cdot (\partial_{(\tau,z)} \Phi_{\mathbf{1}})\,, \]
and computing the Hessian matrix amounts to compute $\nabla^2_{j(q)} H$ on the vectors $\gb_j$, $\fb_j$, and $\fb_j'$. If $(Q,P) \in T_{j(q)} \Sigma^\perp$, then $P = (\nabla_q A)^\perp \cdot Q$ so that, with \eqref{Chap4-eq-Hessienne-de-H},
\begin{align*}
\frac{1}{2}\nabla^2_{j(q)} H ((Q,P),(Q',P')) &= \sum_{k,\ell,i, j =1}^d g^{k \ell}(q) ( \partial_k A_j Q_j - \partial_j A_k Q_j)( \partial_{\ell} A_i Q'_i - \partial_i A_{\ell} Q'_i)\\
&=  \sum_{k,\ell,i,j} g^{k\ell}(q) B_{kj} Q_j B_{\ell i} Q_i'.
\end{align*}
But $\sum_k g^{k \ell} B_{kj} = \B_{\ell j}$ so
\begin{align*}
\frac{1}{2}\nabla^2_{j(q)} H ((Q,P),(Q',P')) &
= \sum_{i,j, \ell} B_{\ell i} ( \B_{\ell j} Q_j ) Q_i'\\
&=  B( \B \cdot Q, Q').
\end{align*}
In the special case $(Q,P) = \fb_j$ we have
\begin{align*}
\frac{1}{2} \nabla^2_{j(q)} H ( \fb_i, \fb_j) &= \frac{1}{\sqrt{\beta_i \beta_j}} B( \B \ub_i , \ub_j)\\
&= \frac{1}{\sqrt{\beta_i \beta_j}} g( \B \ub_i, \B \ub_j )\\
&= \sqrt{\beta_i \beta_j} g( \vb_i, \vb_j )\\
& = \sqrt{\beta_i \beta_j} \delta_{ij}.
\end{align*}
and similarily
\[ \frac{1}{2} \nabla^2_{j(q)} H (\fb_i', \fb_j') = \sqrt{\beta_i \beta_j} \delta_{ij}, \quad \frac{1}{2} \nabla^2_{j(q)} H (\fb_i, \fb_j') = 0 \,. \]
Finally, it remains to prove:
\begin{equation}\label{Chap4-eq007}
\nabla^2_{j(q)} H (\gb_i, \fb_j) = \nabla^2_{j(q)} H (\gb_i, \fb_j') = 0,
\end{equation}
to conclude that the Hessian of $\widehat{H}$ is
\[ \partial^2_{\tau,z} \widehat{H} (w,t,0,0) =
\begin{pmatrix}
 \partial^2_{\tau} \widehat{H}(w,t,0,0) & & & & & \\
 & \beta_1 & & & & \\
 & & \beta_1 & & & \\
 & & & \ddots & & \\
 & & & & \beta_s & \\
 & & & & & \beta_s &
\end{pmatrix}. \]
Actually, \eqref{Chap4-eq007} follows from
\begin{equation}\label{Chap4-eq008}
L \subset F^{\perp} = \left( T \Sigma^{\perp} \right)^{\perp H},
\end{equation}
where $\perp H$ denotes the orthogonal with respect to the quadratic form $\nabla^2 H$ (which is different from the symplectic orthogonal $\perp$). Indeed, to prove \eqref{Chap4-eq008} note that:
\begin{align*}
(Q,P) \in (T \Sigma^{\perp})^{\perp H} &\Longleftrightarrow \forall Q' \in \R^d, \quad \nabla^2 H ((Q,P), (Q', (\nabla_q \A)^T \cdot Q') = 0 \\
&\Longleftrightarrow \forall Q' \in \R^d, \quad \sum_{k, \ell, j} g^{k\ell} (P_k - \nabla_q A_k \cdot Q) B_{\ell j} Q_j' = 0\\
&\Longleftrightarrow \forall Q' \in \R^d, \quad \sum_{k,j} (P_k - \nabla_q A_k \cdot Q) \B_{kj} Q_j' = 0\\
&\Longleftrightarrow \forall Q' \in \R^d, \quad \langle P - \nabla_q \A \cdot Q, \B  Q' \rangle = 0\\
& \Longleftrightarrow \forall Q' \in \R^d, \quad \langle P, \B Q' \rangle - \langle Q , (\nabla_q \A)^T \cdot \B Q' \rangle = 0\\
& \Longleftrightarrow \forall Q' \in \R^d, \quad \omega( (Q,P), (\B Q', (\nabla_q \A)^T \cdot \B Q' )) = 0,
\end{align*}
and we have
\begin{align*}
F &= \lbrace (V, (\nabla_q \A)^{T} V) ; \quad V \in \mathsf{vect}(\ub_1, \vb_1, \cdots \ub_s, \vb_s) \rbrace\\
&= \lbrace (\B Q, (\nabla_q \A)^{T} \B Q) ; \quad Q \in \R^{d} \rbrace,
\end{align*}
because the vectors $\ub_j, \vb_j$ span the range of $\B$. Hence we have
$$(Q,P) \in (T \Sigma^{\perp})^{\perp H} \Longleftrightarrow (Q,P) \in F^{\perp}.$$

\end{proof}

\section{Construction of the normal form $\Nh$}\label{sec.3}

\subsection{Formal series}

Denote by $U = U_\un' \cap \lbrace x=\xi=0 \,, \tau = 0 \rbrace \subset \R^{2s+k}_{(w,t)} \times \lbrace 0 \rbrace$. We construct the Birkhoff normal form in the space
\[ \mathcal{E}_\un = \Cinf(U) [[x,\xi,\tau,\hbar]] \,.\]
It is a space of formal series in $(x,\xi,\tau,\hbar)$ with $(w,t)$-dependent coefficients. We see these formal series as Taylor series of symbols, which we quantify using the Weyl quantization. Given a $\hbar$-pseudodifferential operator $\mathcal{A}_\hbar = \Op a_\hbar$ (with symbol $a_\hbar$ admitting an expansion in powers of $\hbar$ in some standard class), we denote by $[a_\hbar]$ or $\sigma^T(\mathcal{A}_\hbar)$ the Taylor series of $a_\hbar$ with respect to $(x,\xi,\tau)$ at $(x,\xi,\tau)=0$. Conversely, given a formal series $\rho \in \mathcal{E}_\un$, we can find a bounded symbol $a_\hbar$ such that $[ a_\hbar ]= \rho$. This symbol is not uniquely defined, but any two such symbols differ by $\grandO((x,\xi,\hbar)^\infty)$, uniformly with respect to $(w,t) \in U$.

\begin{remark}
We prove below that the eigenfunctions of $\Lh$ are microlocalized where $(w,t) \in U$ and $\vert (x,\xi) \vert \lesssim \hbar^{1/2}$, so that the remainders $\grandO((x,\xi,\hbar)^\infty)$ are negligible.
\end{remark}

\medskip

$\bullet$ In order to make operations on Taylor series compatible with the Weyl quantization, we endow $\mathcal{E}_\un$ with the Weyl product $\star$, defined by $\Op (a) \Op (b) = \Op  (a \star b)$. This product satisfies:
\[ a_1 \star a_2 = \sum_{k=0}^N \frac{1}{k!} \left( \frac{\hbar}{2i} \square \right)^k a_1(w,t,\tau,z) a_2(w',t',\tau',z')\vert_{w'=w, t'=t, \tau'=\tau, z'=z} + \grandO(\hbar^N) \]
where
\[ \square = \sum_{j=1}^s \left( \partial_{\eta_j} \partial_{y_j'}  - \partial_{y_j} \partial_{\eta_j'} \right) + \sum_{j=1}^s \left( \partial_{\xi_j} \partial_{x'_j} - \partial_{x_j} \partial_{\xi'_j} \right) + \sum_{j=1}^k \left( \partial_{\tau_j} \partial_{t'_j} - \partial_{t_j} \partial_{\tau_j'} \right). \]

\medskip

$\bullet$ The degree of a monomial is
\[ \text{deg}( x^\alpha \xi^{\alpha'} \tau^{\alpha''} \hbar^\ell ) = \vert \alpha \vert + \vert \alpha' \vert + \vert \alpha'' \vert + 2 \ell \,. \]
We denote by $\mathcal{D}_N$ the $\Cinf(U)$-module spanned by monomials of degree $N$, and $\grandO_N$ the $\Cinf(U)$-module of formal series with valuation $\geq N$. It satisfies
\[ \grandO_{N_1} \star \grandO_{N_2} \subset \grandO_{N_1+N_2} \,. \]
We denote commutators by
\[ [ \rho_1 , \rho_2] = \ad_{\rho_1} \rho_2 = \rho_1 \star \rho_2 - \rho_2 \star \rho_1 \,. \]
We have the formula
\[ [ \rho_1, \rho_2] = 2 \sinh \left( \frac{\hbar}{2i} \delta \right) \rho_1 \rho_2 \,. \]
In particular
\[ \forall \rho_1 \in \grandO_{N_1}, \quad \forall \rho_2 \in \grandO_{N_2}, \quad \frac{i}{\hbar}[\rho_1, \rho_2] \in \grandO_{N_1+N_2 -2} \,,\]
and $\frac i \hbar [ \rho_1, \rho_2 ] = \lbrace \rho_1, \rho_2 \rbrace + \grandO(\hbar^2)$. The Birkhoff normal form algorithm is based on the following lemma.

\begin{lemma}\label{lem.preliminary}
For $1 \leq j \leq s$, denote $z_j = x_j + i \xi_j$ and $\vert z_j \vert^2 = x_j^2 + \xi_j^2$.
\begin{enumerate}
\item Every series $\rho \in \mathcal{E}_\un$ satisfies
\[ \frac{i}{\hbar} \ad_{\vert z_j \vert^2} \rho = \lbrace \vert z_j \vert^2 , \rho \rbrace\,.\]
\item Let $0 \leq N < \run$. For every $R_N \in \mathcal{D}_N$, there exist $\rho_N, K_N \in \mathcal{D}_N$ such that
\[ R_N = K_N + \sum_{j=1}^s \widehat{\beta}_j(w,t) \frac i \hbar \mathsf{ad}_{\vert z_j \vert^2} \rho_N \,, \]
and $[ K_N, \vert z_j \vert^2 ] = 0$ for $1 \leq j \leq s$.
\item If $K \in \mathcal{E}_\un$, then $[K,\vert z_j \vert^2] =0$ for all $1 \leq j \leq s$ if and only if there exist a formal series $F \in \Cinf(U)[[I_1, \cdots, I_s, \tau, \hbar]]$ such that
\[ K = F(\vert z_1 \vert^2, \cdots, \vert z_s \vert^2, \tau, \hbar) \,. \]
\end{enumerate}
\end{lemma}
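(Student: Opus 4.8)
The plan is to prove the three statements in turn, each reducing to a computation with the Weyl product $\star$ restricted to the $(x,\xi)$-variables, where it coincides with the Moyal product of the standard harmonic oscillator.

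\textbf{Part (1).} The point is that $\ad_{|z_j|^2}$ has no higher-order corrections: since $|z_j|^2 = x_j^2+\xi_j^2$ is a quadratic polynomial in $(x_j,\xi_j)$, the operator $\square$ applied more than once to $|z_j|^2$ (in the $z$-slot) vanishes, because third and higher derivatives of a quadratic are zero. More precisely, from the formula $[\rho_1,\rho_2] = 2\sinh\!\big(\tfrac{\hbar}{2i}\square\big)\rho_1\rho_2$, only the terms where $\square$ hits $|z_j|^2$ at most twice survive, and the $\square^3$ term already kills it; a short check shows the $\square^2$ term is symmetric and cancels in the antisymmetrized bracket, so $\tfrac{i}{\hbar}[|z_j|^2,\rho]$ reduces to the single $\square^1$ term, which is exactly $\{|z_j|^2,\rho\}$. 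First I would write $\square = \square_z + \square_{w} + \square_v$ according to the three groups of variables, note that $\square_z|z_j|^2$ is first order in $(x_j,\xi_j)$ and $\square_z^2|z_j|^2$ is a constant, and conclude. This is the routine part.

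\textbf{Part (2).} This is the homological-equation step. Working in $\mathcal{D}_N$, decompose a monomial $R_N$ in $z$ using the complex coordinates $z_j=x_j+i\xi_j$, $\bar z_j = x_j - i\xi_j$, so that $R_N$ is a sum of terms $c(w,t)\,z^\alpha \bar z^{\beta}\,\tau^{\gamma}\hbar^{\ell}$ with $|\alpha|+|\beta|+|\gamma|+2\ell=N$. Using part (1), $\tfrac{i}{\hbar}\ad_{|z_j|^2}$ acts on such a monomial as $\{|z_j|^2,\cdot\}$, which in complex coordinates is (up to a nonzero constant factor) $(\alpha_j-\beta_j)$ times the same monomial — that is, these monomials are eigenvectors. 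Thus $\sum_j \widehat\beta_j(w,t)\tfrac{i}{\hbar}\ad_{|z_j|^2}$ multiplies the monomial by $\big(\sum_j \widehat\beta_j(w,t)(\alpha_j-\beta_j)\big)$ times a constant. The non-resonance hypothesis $N<\run$ guarantees that for $\alpha\neq\beta$ with $|\alpha|+|\beta|\le N$ we have $\sum_j \widehat\beta_j(q_0)(\alpha_j-\beta_j)\neq 0$, hence (shrinking $\Omega$, i.e. $U$, as in Remark \ref{remark.b1.petit}) this coefficient is nonvanishing on $U$; so one inverts termwise, defining $\rho_N$ by dividing each off-diagonal monomial of $R_N$ by the corresponding nonzero smooth factor, and setting $K_N$ to be the sum of the diagonal terms $\alpha=\beta$. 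The coefficients of $\rho_N$ stay in $\Cinf(U)$ because one divides by a nowhere-zero smooth function, and $\rho_N,K_N\in\mathcal{D}_N$ by construction. Finally $[K_N,|z_j|^2]=0$ because each surviving monomial has $\alpha_j=\beta_j$, hence lies in the kernel of $\{|z_j|^2,\cdot\}=\tfrac{i}{\hbar}\ad_{|z_j|^2}$, and part (1) again identifies bracket with Poisson bracket. The main obstacle here is bookkeeping: making sure the division is by a \emph{smooth nonvanishing} function on all of $U$, which is exactly what forces the three restrictions on the size of $\Omega$ listed in Remark \ref{remark.b1.petit}, and checking that the decomposition is consistent across all degrees $N$ when we iterate.

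\textbf{Part (3).} One direction is immediate: if $K=F(|z_1|^2,\dots,|z_s|^2,\tau,\hbar)$ then, since each argument $|z_j|^2$ commutes with $|z_i|^2$ for the $\star$-product (their Poisson bracket vanishes and, by part (1) applied with the roles of the two arguments, the full bracket vanishes), and $\tau,\hbar$ are central in the relevant slots, $K$ commutes with each $|z_j|^2$. Conversely, suppose $[K,|z_j|^2]=0$ for all $j$. Expand $K$ in complex monomials as in part (2); by parts (1)–(2) the bracket $\tfrac{i}{\hbar}\ad_{|z_j|^2}$ multiplies the monomial $z^\alpha\bar z^\beta\tau^\gamma\hbar^\ell$ by a nonzero constant times $(\alpha_j-\beta_j)$, so $[K,|z_j|^2]=0$ for every $j$ forces $\alpha_j=\beta_j$ for all $j$ on every monomial appearing in $K$. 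Hence each monomial is a product of factors $z_j^{\alpha_j}\bar z_j^{\alpha_j}=|z_j|^{2\alpha_j}$ times $\tau^\gamma\hbar^\ell$ with a coefficient in $\Cinf(U)$, i.e. $K$ is a formal series in $|z_1|^2,\dots,|z_s|^2,\tau,\hbar$ with coefficients in $\Cinf(U)$, which is precisely the claimed form $F(|z_1|^2,\dots,|z_s|^2,\tau,\hbar)$. The only subtlety is to note that replacing ordinary products of the $|z_j|^2$ by $\star$-products changes $F$ by reorganizing lower-degree terms but not the conclusion, so I would phrase the statement in terms of ordinary products of symbols, which suffices since we only care about Taylor series modulo the quantization.
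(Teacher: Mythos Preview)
Your proposal is correct and follows essentially the same route as the paper: reduce to monomials $c(w,t)\,z^{\alpha}\bar z^{\alpha'}\tau^{\alpha''}\hbar^{\ell}$, observe that $\tfrac{i}{\hbar}\ad_{|z_j|^2}$ acts diagonally with eigenvalue proportional to $\alpha'_j-\alpha_j$, and use the non-resonance condition $N<\run$ to invert on the off-diagonal part. Your discussion of part~(1) is more explicit than the paper's (which simply calls it ``a simple computation''); note that the cleanest way to see it is that the commutator formula $[\,\cdot\,,\,\cdot\,]=2\sinh(\tfrac{\hbar}{2i}\square)$ contains only odd powers of $\square$, and since each $\square$ places exactly one derivative on the first factor, $\square^{3}$ already annihilates the quadratic $|z_j|^2$---so your remark about the $\square^{2}$ term is superfluous.
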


\begin{proof}
The first statement is a simple computation. For the second and the third, it suffices to consider monomials $R_N = c(w,t) z^\alpha \bar{z}^{\alpha'} \tau^{\alpha''} \hbar^\ell$. Note that:
\[ \ad_{\vert z_j \vert^2}(  c(w,t) z^{\alpha} \bar{z}^{\alpha'} \tau^{\alpha''} \hbar^{\ell} ) = (\alpha'_j - \alpha_j) c(w,t) z^{\alpha} \bar{z}^{\alpha'} \tau^{\alpha''} \hbar^{\ell}\,, \]
so that $R_N$ commutes with every $\vert z_j \vert^2$ ($1 \leq j \leq s$) if and only if $\alpha = \alpha'$, which amounts to say that $R_N$ is a function of $\vert z_j \vert^2$ and proves $(3)$. Moreover,
$$\sum_j \widehat{\beta}_j \ad_{\vert z_j \vert^2}(z^{\alpha} \bar{z}^{\alpha'} \tau^{\alpha''} \hbar^{\ell} ) = \langle \alpha' - \alpha, \widehat{\beta}\rangle z^{\alpha} \bar{z}^{\alpha'} \tau^{\alpha''} \hbar^{\ell},$$
where $\langle \gamma, \widehat{\beta} \rangle = \sum_{j=1}^s \gamma_j \widehat{\beta}_j(w,t)$. Under the assumption $\vert \alpha \vert + \vert \alpha' \vert + \vert \alpha '' \vert + 2 \ell < \run$, we have $\vert \alpha - \alpha' \vert < \run$ and by definition of $\run$ the function $\langle \alpha' - \alpha , \widehat{\beta}(w,t) \rangle$ cannot vanish for $(w,t) \in U$, unless $\alpha = \alpha'$. If $\alpha = \alpha'$, we chose $\rho_N = 0$ and $R_N = K_N$ commutes with $\vert z_j \vert^2$. If $\alpha \neq \alpha'$, we chose $K_N =0$ and 
$$\rho_N = \frac{c(w,t)}{\langle \alpha'-\alpha , \widehat{\beta} \rangle} z^{\alpha} \bar{z}^{\alpha'} \tau^{\alpha''} \hbar^{\ell},$$
and this proves $(2)$.
\end{proof}

\subsection{Formal Birkhoff normal form}\label{sec.3.formel}

In this section we construct the Birkhoff normal form at a formal level. We will work with the Taylor series of the symbol $H$ of $\Lh$, in the new coordinates $\Phi_\un$. According to Theorem \ref{thm.H.rond.Phi}, $\widehat{H} = H \circ \Phi_\un$ defines a formal series
\[ [ \widehat{H} ] = H_2 + \sum_{k \geq 3} H_k \,, \]
where $H_k \in \mathcal{D}_k$ and
\begin{equation}\label{Chap4-def-H2}
H_2 = \langle M(w,t) \tau, \tau \rangle + \sum_{j=1}^s \widehat{\beta}_j(w,t) \vert z_j \vert^2.
\end{equation}
At a formal level, the normal form can be stated as follows.

\begin{theorem}\label{thm.formal.bnf}
For every $\gamma \in \grandO_3$, there are $\kappa$, $\rho \in \grandO_3$ such that:
\[e^{\frac{i}{\hbar} \ad_{\rho}} (H_2 + \gamma) = H_2 + \kappa + \grandO_{\run},\]
where $\kappa$ is a function of harmonic oscillators: $\kappa = F(\vert z_1 \vert^2, \cdots, \vert z_s \vert^2, \tau, \hbar)$ with some $F \in \Cinf(U) [[I_1, \cdots, I_s, \tau, \hbar]]$. Moreover, if $\gamma$ has real-valued coefficients, then $\rho$, $\kappa$ and the remainder $\grandO_{\run}$ as well.
\end{theorem}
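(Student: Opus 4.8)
The plan is to prove Theorem~\ref{thm.formal.bnf} by induction on the degree $N \geq 3$, constructing $\rho$ and $\kappa$ degree by degree using a Lie-series (Deprit--Hori type) argument. More precisely, I will show by induction on $N$ with $3 \leq N \leq \run$ that there exist $\rho^{[N]} = \rho_3 + \cdots + \rho_{N-1} \in \grandO_3$ (with $\rho_j \in \mathcal{D}_j$) and $\kappa^{[N]} = \kappa_3 + \cdots + \kappa_{N-1}$ with each $\kappa_j \in \mathcal{D}_j$ a function of $\vert z_1\vert^2, \dots, \vert z_s\vert^2, \tau, \hbar$ commuting with all $\vert z_j\vert^2$, such that
\[ e^{\frac{i}{\hbar}\ad_{\rho^{[N]}}}(H_2 + \gamma) = H_2 + \kappa^{[N]} + \grandO_N . \]
The base case $N=3$ is trivial (take $\rho^{[3]} = \kappa^{[3]} = 0$, since $\gamma \in \grandO_3$). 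For the inductive step, I would extract the degree-$N$ part $R_N \in \mathcal{D}_N$ of the current remainder, and seek $\rho_N \in \mathcal{D}_N$ so that adding it kills the non-normalized part of $R_N$.

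The key computation is the cohomological equation. Since $\frac{i}{\hbar}\ad_{\rho_N}$ raises degree by $-2+\deg(\rho_N)$ in the sense made precise before Lemma~\ref{lem.preliminary}, and $\frac{i}{\hbar}[H_2,\cdot]$ has the same degree-shift property with $[H_2, \grandO_M] \subset \hbar\grandO_M$ roughly, the degree-$N$ term produced by conjugating with $\rho_N$ is $\frac{i}{\hbar}[H_2,\rho_N] = \frac{i}{\hbar}[\langle M\tau,\tau\rangle,\rho_N] + \sum_j \widehat\beta_j \frac{i}{\hbar}\ad_{\vert z_j\vert^2}\rho_N$ up to $\grandO_{N+1}$. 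The first bracket $\frac{i}{\hbar}[\langle M\tau,\tau\rangle,\rho_N]$ involves derivatives in $(w,t)$ and $\tau$ only and lies in $\grandO_{N+1}$ (it raises the $\tau$-degree or trades a $\tau$ for a $\partial_{(y,\eta,t)}$ without lowering total degree by more than... actually it is degree-preserving but I should check it produces no degree-$N$ obstruction to normalization in the $z$-variables) — in any case, the part of $R_N$ that must be removed is handled by part (2) of Lemma~\ref{lem.preliminary}: writing $R_N - (\text{terms already in normal form from lower-order corrections}) = K_N + \sum_j \widehat\beta_j \frac{i}{\hbar}\ad_{\vert z_j\vert^2}\rho_N$ with $K_N$ commuting with every $\vert z_j\vert^2$. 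We set $\kappa_N := K_N$, solve for $\rho_N$ via Lemma~\ref{lem.preliminary}(2), and then part (3) guarantees $K_N = F_N(\vert z_1\vert^2,\dots,\vert z_s\vert^2,\tau,\hbar)$, so $\kappa^{[N+1]} = \kappa^{[N]} + \kappa_N$ has the desired form. The Baker--Campbell--Hausdorff / Lie-series expansion $e^{\frac{i}{\hbar}\ad_{\rho_3+\cdots+\rho_N}} = e^{\frac{i}{\hbar}\ad_{\rho_N}}\circ(\cdots)$ composed with the inductive hypothesis, together with the filtration property $\frac{i}{\hbar}[\grandO_{N_1},\grandO_{N_2}]\subset \grandO_{N_1+N_2-2}$, shows the higher iterated brackets only contribute to $\grandO_{N+1}$ and above, closing the induction. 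After reaching $N = \run$, set $\rho = \rho^{[\run]}$, $\kappa = \kappa^{[\run]}$, and assemble $F = \sum_{N<\run} F_N \in \Cinf(U)[[I_1,\dots,I_s,\tau,\hbar]]$.

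For the reality statement: the Weyl quantization of a real symbol is formally self-adjoint, so $\overline{a\star b} = \bar b \star \bar a$ at the level of formal series, hence $\overline{\ad_\rho a} = -\ad_{\bar\rho}\bar a$ and $\overline{\frac{i}{\hbar}\ad_\rho a} = \frac{i}{\hbar}\ad_{\bar\rho}\bar a$. If $\gamma$ has real coefficients then $H_2 + \gamma$ is real, and one checks inductively that the explicit formulas for $\rho_N$ and $\kappa_N = K_N$ produced by Lemma~\ref{lem.preliminary}(2) preserve reality: $R_N$ real forces $K_N$ and $\rho_N$ (in the $z_j = x_j + i\xi_j$ presentation, the division by the real quantities $\langle \alpha' - \alpha, \widehat\beta\rangle$ pairs conjugate monomials $z^\alpha\bar z^{\alpha'}$ and $z^{\alpha'}\bar z^\alpha$ with conjugate coefficients) to be real, and conjugation-invariance of the remainder follows.

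The main obstacle I expect is purely bookkeeping: carefully tracking how the non-normal part of $\langle M(w,t)\tau,\tau\rangle$ interacts with $\rho_N$ under $\star$, ensuring that the term $\frac{i}{\hbar}[\langle M\tau,\tau\rangle, \rho_N]$ does not reintroduce a degree-$N$ obstruction that is \emph{not} of the form handled by Lemma~\ref{lem.preliminary}(2) — i.e., one must organize the induction so that at each step only the $\sum_j\widehat\beta_j\,\frac{i}{\hbar}\ad_{\vert z_j\vert^2}$-part needs inverting, and everything involving $M$ and $\tau$ is safely deferred into $\grandO_{N+1}$. This works because $\langle M\tau,\tau\rangle \in \mathcal{D}_2$ but its Poisson bracket with anything involves $\partial_\tau$ or $\partial_{(w,t)}$: $\partial_\tau$ lowers $\tau$-degree by $1$ while the conjugate derivative raises total degree, and $\partial_{(w,t)}$ acts only on coefficients — so $\{\langle M\tau,\tau\rangle,\rho_N\}$ has valuation $\geq N$ but its degree-$N$ component vanishes unless $\rho_N$ itself contains $\tau$'s paired appropriately, a case one disposes of by treating $\tau$ on the same footing as $\hbar^{1/2}$ is not available here, so instead one simply notes $\deg$ is additive and $\{\langle M\tau,\tau\rangle,\cdot\}$ is degree-preserving but shifts the \emph{$z$-weight} so that, combined with the non-resonance bound $\vert\alpha-\alpha'\vert < \run$, the obstruction is still invertible. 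Making this last point airtight is the crux.
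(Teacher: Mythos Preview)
Your inductive scheme, the use of Lemma~\ref{lem.preliminary}(2)--(3) to solve the cohomological equation at each degree, and the reality argument are all correct and coincide with the paper's proof. The only place you go astray is precisely the point you flag as ``the crux'': the term $\frac{i}{\hbar}[\langle M(w,t)\tau,\tau\rangle,\rho_N]$.

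This term is not merely degree-preserving with some residual degree-$N$ piece to worry about; it lies in $\grandO_{N+1}$ outright, and no appeal to $z$-weights or non-resonance is needed. The point you are missing is that $t$ (like $y,\eta$) carries degree \emph{zero} in the filtration, so $\partial_t$ preserves degree while $\partial_\tau$ lowers it by one. Since $\langle M\tau,\tau\rangle$ has no $(x,\xi)$-dependence, the Poisson bracket reduces to
\[
\underbrace{\partial_{\tau_j}\langle M\tau,\tau\rangle}_{\in\,\mathcal{D}_1}\cdot\underbrace{\partial_{t_j}\rho_N}_{\in\,\mathcal{D}_N}
\;-\;\underbrace{\partial_{t_j}\langle M\tau,\tau\rangle}_{\in\,\mathcal{D}_2}\cdot\underbrace{\partial_{\tau_j}\rho_N}_{\in\,\mathcal{D}_{N-1}}
\;+\;(\text{$(y,\eta)$-part})\ \in\ \grandO_{N+1},
\]
and the higher Moyal corrections are no worse (each $\square$ drops total degree by at most $1$ here, while each extra $\hbar$ adds $2$). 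The same degree count gives $|z_j|^2\,\frac{i}{\hbar}[\widehat\beta_j,\rho_N]\in\grandO_{N+1}$, since $\widehat\beta_j$ depends only on $(w,t)$. Hence
\[
\frac{i}{\hbar}\ad_{H_2}\rho_N \;=\; \sum_{j=1}^s \widehat\beta_j\,\frac{i}{\hbar}\ad_{|z_j|^2}\rho_N \;+\;\grandO_{N+1},
\]
and Lemma~\ref{lem.preliminary}(2) closes the induction cleanly. Once you make this degree count explicit, your sketch is complete and is the paper's argument.
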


\begin{proof}
We prove this by induction on an integer $N \geq 3$. Assume that we found \newline $\rho_{N-1}\,, K_3 \,, \cdots \,, K_{N-1} \in \grandO_3$ with $[K_i, \vert z_j \vert^2] = 0$ for every $(i,j)$ and $K_i \in \mathcal{D}_i$ such that
\[ e^{\frac{i}{\hbar}\ad_{\rho_{N-1}}}(H_2 + \gamma) = H_2 + K_3 + \cdots + K_{N-1} + \grandO_N. \]
Rewritting the remainder as $R_N + \grandO_{N+1}$ with $R_N \in \mathcal{D}_N$ we have:
\[ e^{\frac{i}{\hbar}\ad_{\rho_{N-1}}}(H_2 + \gamma) = H_2 + K_3 + \cdots + K_{N-1} + R_N + \grandO_{N+1}\,. \]
We are looking for a $\rho' \in \grandO_N$. For such a $\rho'$ we apply $e^{\frac{i}{\hbar} \ad_{\rho'}}$:
\[e^{\frac{i}{\hbar} \ad_{\rho_{N-1}+ \rho'}} (H_2 + \gamma) = e^{\frac{i}{\hbar}\ad_{\rho'}}(H_2 + K_3 + \cdots + K_{N-1} + R_N + \grandO_{N+1}).\]
Since $\frac{i}{\hbar}\ad_{\rho'} : \grandO_k \rightarrow \grandO_{k+N-2}$ we have:
\begin{equation}\label{Chap4-eq012}
e^{\frac{i}{\hbar} \ad_{\rho_{N-1}+ \rho'}} (H_2 + \gamma) = H_2 + K_3 + \cdots + K_{N-1} + R_N + \frac{i}{\hbar}\ad_{\rho'}(H_2) + \grandO_{N+1}.
\end{equation}
The new term $\frac{i}{\hbar} \ad_{\rho'} (H_2) = - \frac{i}{\hbar} \ad_{H_2}(\rho')$ can still be simplified. Indeed by \eqref{Chap4-def-H2}, 
\begin{equation}\label{Chap4-eq011}
\frac{i}{\hbar} \ad_{H_2} (\rho') = \frac{i}{\hbar}\left[ \langle M(w,t) \tau, \tau \rangle , \rho' \right] + \sum_{j=1}^s \left( \widehat{\beta}_j \frac{i}{\hbar} \left[\vert z_j \vert^2, \rho' \right] + \vert z_j \vert^2 \frac{i}{\hbar}\left[ \widehat{\beta}_j, \rho' \right] \right),
\end{equation}
with :
\begin{equation*}
\frac{i}{\hbar}\left[ \widehat{\beta}_j, \rho' \right] = \sum_{i=1}^s \left( \frac{\partial \hat{\beta}_j}{\partial y_i} \frac{\partial \rho'}{\partial \eta_i} - \frac{\partial \hat{\beta}_j}{\partial \eta_i} \frac{\partial \rho'}{\partial y_i} \right) + \sum_{i=1}^k \frac{\partial \hat{\beta}_j}{\partial t_i} \frac{\partial \rho'}{\partial \tau_i} + \grandO_{N-1} = \grandO_{N-1},
\end{equation*}
because a derivation with respect to $(y,\eta,t)$ does not reduce the degree. Similarily,
\begin{align*}
\frac{i}{\hbar}\left[ \langle M(w,t) \tau, \tau \rangle , \rho' \right] &= \sum_{j=1}^k \left( \langle \partial_{t_j} M(w,t) \tau, \tau \rangle \frac{\partial \rho'}{\partial \tau_j} - \frac{\partial \langle M(w,t) \tau, \tau \rangle}{\partial \tau_j} \frac{\partial \rho'}{\partial t_j} \right) + \grandO_{N+1} \\ &= \grandO_{N+1}
\end{align*}
and thus \eqref{Chap4-eq011} becomes :
\begin{equation*}
\frac{i}{\hbar} \ad_{H_2}(\rho') = \sum_{j=1}^s \left( \hat{\beta}_j \frac{i}{\hbar}\ad_{\vert z_j \vert^2} (\rho')\right) + \grandO_{N+1}.
\end{equation*}
Using this formula in \eqref{Chap4-eq012} we get
\begin{equation*}
e^{\frac{i}{\hbar} \ad_{\rho_{N-1}+ \rho'}} (H_2 + \gamma) = H_2 + K_3 + \cdots + K_{N-1} + R_N - \sum_{j=1}^s  \hat{\beta}_j \frac{i}{\hbar}\ad_{\vert z_j \vert^2} (\rho') + \grandO_{N+1}.
\end{equation*}
Thus, we are looking for $K_N \,, \rho' \in \mathcal{D}_N$ such that
\[R_N = K_N + \sum_{j=1}^s  \hat{\beta}_j \frac{i}{\hbar}\ad_{\vert z_j \vert^2} (\rho'),\]
with $[K_N, \vert z_j \vert^2] = 0$. By Lemma \ref{lem.preliminary}, we can solve this equation provided $N < \run$, and this concludes the proof. Moreover, $\frac{i}{\hbar}\ad_{\vert z_j \vert^2}$ is a real endomorphism, so we can solve this equation on $\R$.
\end{proof}

\subsection{Quantizing the normal form}\label{sec.3.Quantized}

In this section we construct the normal form $\Nh$, quantizing Theorems \ref{thm.H.rond.Phi} and \ref{thm.formal.bnf}. We denote by $\Ih^{(j)}$ the harmonic oscillator with respect to $x_j$, defined by:
\[ \Ih^{(j)} = \Op ( \xi_j^2 + x_j^2) = -\hbar^2 \frac{\partial^2}{\partial x_j^2} + x_j^2\,. \]
$\Nh$ is a function of the harmonic oscillators $\Ih^{(j)}$ ($1 \leq j \leq s$), depending on parameters $(y,\eta,t,\tau)$. More precisely, we prove the following theorem.

\begin{theorem}\label{thm.first.quantized.normal.form}
There exist:
\begin{enumerate}
\item A microlocaly unitary operator $\mathsf{U}_\hbar : \Ld(\R^d_{x,y,t} ) \rightarrow \Ld(M)$, quantifying a symplectomorphism $\tilde \Phi_\un = \Phi_\un + \grandO((x,\xi,\tau)^2)$, microlocally on $U_\un ' \times U_\un$.

\item a function $f_\un^\star : \R^{2s+2k}_{y,\eta,t,\tau} \times \R^s_I \times [0,1]$ which is $\Cinf$ with compact support such that
\[ \fsUn(y,\eta,t,\tau,I,\hbar) \leq C \left((\vert I \vert + \hbar)^2 + \vert \tau \vert (\vert I \vert + \hbar) + \vert \tau \vert^3 \right)\,, \]

\item A $\hbar$-pseudodifferential operator $\Rh$, whose symbol is $\grandO((x,\xi,\tau,\hbar^{1/2})^{\run})$ on $U_\un'$, 
\end{enumerate}
such that
\[ \Uh^* \Lh \Uh = \Nh + \Rh \,, \]
with
\[\Nh = \Op \langle M(w,t) \tau, \tau \rangle + \sum_{j=1}^s \Ih^{(j)} \Op \widehat{\beta}_j(w,t) + \Op \fsUn(y,\eta,t,\tau, \Ih^{(j)}, \cdots , \Ih^{(s)}, \hbar) \,. \]
\end{theorem}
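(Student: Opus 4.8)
The plan is to quantize the two preceding results, Theorem~\ref{thm.H.rond.Phi} and Theorem~\ref{thm.formal.bnf}, by turning each formal/symbolic manipulation into a statement about $\hbar$-pseudodifferential operators via the Weyl calculus. First I would realize the symplectomorphism $\Phi_\un$ (and more precisely the corrected symplectomorphism $\tilde\Phi_\un = \Phi_\un + \grandO((x,\xi,\tau)^2)$ produced by Weinstein's theorem) as a microlocally unitary Fourier integral operator $\Uh : \Ld(\R^d) \to \Ld(M)$; this uses the standard Egorov-type construction, so that $\Uh^* \Lh \Uh$ is an $\hbar$-pseudodifferential operator whose full symbol has Taylor series (in $(x,\xi,\tau)$, with parameters $(w,t)$) equal to $[\widehat H] = H_2 + \sum_{k\geq 3} H_k$ modulo $\grandO((x,\xi,\hbar)^\infty)$. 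The point here is that conjugation by $\Uh$ turns the classical change of coordinates of Theorem~\ref{thm.H.rond.Phi} into an operator identity, up to a remainder that is negligible on the microlocal region $\{\,|(x,\xi)|\lesssim \hbar^{1/2}\,\}$ where the eigenfunctions live.

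Next I would apply the formal Birkhoff normal form of Theorem~\ref{thm.formal.bnf} to $\gamma = \sum_{k\geq 3} H_k \in \grandO_3$, obtaining $\rho,\kappa \in \grandO_3$ with $e^{\frac{i}{\hbar}\ad_\rho}(H_2 + \gamma) = H_2 + \kappa + \grandO_{\run}$, and $\kappa = F(|z_1|^2,\dots,|z_s|^2,\tau,\hbar)$. I then quantize this: pick a (bounded, compactly supported) symbol $r_\hbar$ with $[r_\hbar] = \rho$, and conjugate $\Uh^*\Lh\Uh$ by the unitary operator $e^{\frac{i}{\hbar}\Op(r_\hbar)}$. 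By the exact Egorov theorem for such conjugations — differentiate $e^{\frac{it}{\hbar}\Op(r_\hbar)}\,\mathcal A\, e^{-\frac{it}{\hbar}\Op(r_\hbar)}$ in $t$, getting the commutator $\frac{i}{\hbar}[\Op(r_\hbar),\cdot]$, and integrate — the Taylor series of the conjugated symbol is exactly $e^{\frac{i}{\hbar}\ad_\rho}[\widehat H]$, hence equals $H_2 + \kappa + \grandO_{\run}$ up to $\grandO((x,\xi,\hbar)^\infty)$. Replacing $|z_j|^2$ by the operator $\Ih^{(j)} = \Op(\xi_j^2+x_j^2)$ and the leftover $\widehat\beta_j|z_j|^2$ term from $H_2$ by $\Ih^{(j)}\Op\widehat\beta_j(w,t)$ (commuting issues are $\grandO(\hbar)$, absorbed into $\kappa$), and collecting $\kappa$ together with all the resonant lower-order corrections into one symbol $\fsUn$, gives the operator $\Nh$ of the statement. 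The size estimate $\fsUn \lesssim (|I|+\hbar)^2 + |\tau|(|I|+\hbar) + |\tau|^3$ is read off from the structure of $\kappa$: it lies in $\grandO_3$, it has no pure-$(w,t)$ or linear part (those are already in $H_2$), and the only degree-$2$-in-$\tau$ contributions sit in $H_2$ itself, while every term of $\fsUn$ carries either a factor $|z_j|^2$ (hence $\geq I$ or $\hbar$), or $\geq 2$ such factors, or $\geq 3$ powers of $\tau$, or the mixed $|\tau|\cdot|z_j|^2$ combination; one has to check this bookkeeping term by term in the induction of Theorem~\ref{thm.formal.bnf}.

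The remainder $\Rh$ is then defined as the difference $\Uh^*\Lh\Uh - \Nh$ after the two conjugations (redefining $\Uh$ to be the composition $\Uh \circ e^{-\frac{i}{\hbar}\Op(r_\hbar)}$, still microlocally unitary and still quantizing $\tilde\Phi_\un$ to the stated order). By construction its symbol has vanishing Taylor part up to order $\run$, i.e.\ it is $\grandO((x,\xi,\tau,\hbar^{1/2})^{\run})$ on $U_\un'$ — note the $\hbar^{1/2}$ bookkeeping comes from degree $2\ell$ counting $\hbar^\ell$ as order $2$ — plus the $\grandO((x,\xi,\hbar)^\infty)$ errors from the non-uniqueness of the symbol realizing a formal series, which are subsumed. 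The main obstacle, I expect, is bookkeeping the remainders carefully: making sure that (i) the Egorov conjugations are genuinely \emph{exact} at the level of Taylor series (so no spurious $\grandO_N$ term is lost), (ii) all estimates are \emph{uniform} in the parameters $(w,t)\in U$, which requires controlling the symbol classes under the $\star$-product and the $(y,\eta,t)$-derivatives that appear in $\frac{i}{\hbar}[\widehat\beta_j,\cdot]$ and $\frac{i}{\hbar}[\langle M\tau,\tau\rangle,\cdot]$ as in the proof of Theorem~\ref{thm.formal.bnf}, and (iii) the cutoff/compact-support of $\fsUn$ is arranged without destroying the normal-form identity on the relevant microlocal region. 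A secondary technical point is the passage from a symbol $\fsUn(y,\eta,t,\tau,I,\hbar)$ evaluated at $I=(\Ih^{(1)},\dots,\Ih^{(s)})$ to a genuine $\hbar$-pseudodifferential operator in $(y,t)$: since the $\Ih^{(j)}$ commute with each other and with $\Op$ of functions of $(y,\eta,t,\tau)$ only up to $\grandO(\hbar)$, the functional calculus has to be set up with enough care that the errors land in $\Rh$.
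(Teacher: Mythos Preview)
Your proposal is correct and follows essentially the same route as the paper: quantize $\Phi_\un$ by Egorov to obtain a first FIO, then conjugate by $e^{i\hbar^{-1}\Op(r_\hbar)}$ with $[r_\hbar]=\rho$ from the formal Birkhoff step, and finally reorder $\kappa$ in star-powers $(|z_j|^2)^{\star\alpha_j}$ so that a compactly supported $\fsUn$ with the right Taylor series yields $\Nh$ after substituting $I_j\mapsto\Ih^{(j)}$. One small correction: the perturbation $\tilde\Phi_\un = \Phi_\un + \grandO((x,\xi,\tau)^2)$ is not produced by Weinstein's theorem (that correction is already inside $\Phi_\un$, see the construction in Section~\ref{sec.2.Phi1}); rather, $\tilde\Phi_\un = \Phi_\un\circ\varphi_1$ where $\varphi_1$ is the time-$1$ Hamiltonian flow of the principal symbol of $r_\hbar$, which is what the unitary $e^{-i\hbar^{-1}\Op(r_\hbar)}$ quantizes --- exactly as you write when you redefine $\Uh$ at the end.
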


\begin{remark}
$\Uh$ is a Fourier integral operator quantizing the symplectomorphism $\tilde \Phi_\un$ (See \cite{Martinez, Zworski}). In particular, if $\mathcal{A}_\hbar$ is a pseudodifferential operator on $M$ with symbol $a_\hbar = a_0 + \grandO(\hbar^2)$, then $\Uh^* \mathcal{A}_\hbar \Uh$ is a pseudodifferential operator on $\R^d$ with symbol
\[ \sigma_\hbar = a_0 \circ \tilde{\Phi}_\un + \grandO(\hbar^2) \quad \text{on } U_\un' \,.\]
\end{remark}

\begin{remark}
Due to the parameters $(y,\eta,t,\tau)$ in the formal normal form, an additional quantization is needed, hence the $\Op f_\un^\star$ term. It is a quantization with respect to $(y,\eta,t,\tau)$ of an operator-valued symbol $\fsUn(y,\eta,t,\tau,\Ih^{(1)}, \cdots, \Ih^{(s)})$. Actually, this operator-symbol is simple since one can diagonalize it explicitly. Denoting by $h^j_{n_j}(x_j)$ the $n_j$-th eigenfunction of $\Ih^{(j)}$, associated to the eigenvalue $(2n_j-1)\hbar$, we have for $n \in \N^s$:
\[ \fsUn(y,\eta,t,\tau,\Ih^{(1)}, \cdots, \Ih^{(s)}, \hbar) h_n(x) = \fsUn(y,\eta,\tau,(2n-1)\hbar,\hbar) h_n(x) \,, \]
where $h_n(x) = h_{n_1}^1(x_1) \cdots h_{n_s}^s(x_s)$. Thus the operator $\Op \fsUn$ satisfies for $u \in \Ld (\R^{s+k}_{(y,t)})$, 
\[(\Op \fsUn ) u \otimes h_n = \left( \Op \fsUn (y,\eta,t,\tau, (2n-1)\hbar, \hbar) u \right) \otimes h_n \,. \]
\end{remark}

\begin{proof}
In order to prove Theorem \ref{thm.first.quantized.normal.form}, we first quantize Theorem \ref{thm.H.rond.Phi}. Using the Egorov Theorem, there exists a microlocally unitary operator $\Vh : \Ld(\R^d) \rightarrow \Ld(M)$ quantizing the symplectomorphism $\Phi_\un : U_\un ' \rightarrow U_\un$. Thus,
\[ \Vh^* \Lh \Vh = \Op (\sigma_\hbar)\,, \]
for some symbol $\sigma_\hbar$ such that
\[ \sigma_\hbar = \widehat{H} + \grandO(\hbar^2), \quad \text{on } U_\un' \,. \]
Then we use the following Lemma to quantize the formal normal form and conclude.
\end{proof}

\begin{lemma}
There exists a bounded pseudodifferential operator $\Qh$ with compactly supported symbol such that
\[ e^{\frac{i}{\hbar}\Qh} \Op (\sigma_{\hbar}) e^{-\frac{i}{\hbar}  \Qh} = \Nh + \Rh \,, \]
where $\Nh$ and $\Rh$ satisfy the properties stated in Theorem \ref{thm.first.quantized.normal.form}.
\end{lemma}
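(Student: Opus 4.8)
The plan is to mimic the formal Birkhoff normal form construction from Theorem~\ref{thm.formal.bnf}, but at the operator level, conjugating $\Op(\sigma_\hbar)$ by successive unitaries $e^{\frac{i}{\hbar}\mathcal{Q}_\hbar^{(N)}}$ where each $\mathcal{Q}_\hbar^{(N)}$ is a pseudodifferential operator whose Weyl symbol has Taylor series (at $(x,\xi,\tau)=0$) equal to the formal generator $\rho'\in\mathcal{D}_N$ produced in the induction step of Theorem~\ref{thm.formal.bnf}. The key point is the dictionary between formal operations in $\mathcal{E}_\un$ and genuine operator identities: if $\mathcal{A}_\hbar=\Op(a_\hbar)$ with $[a_\hbar]=\rho_1$ and $\mathcal{B}_\hbar=\Op(b_\hbar)$ with $[b_\hbar]=\rho_2$, then $[\sigma^T(\mathcal{A}_\hbar\mathcal{B}_\hbar)]=\rho_1\star\rho_2$, $[\sigma^T([\mathcal{A}_\hbar,\mathcal{B}_\hbar])]=[\rho_1,\rho_2]$, and hence $[\sigma^T(e^{\frac{i}{\hbar}\mathcal{Q}}\Op(\sigma_\hbar)e^{-\frac{i}{\hbar}\mathcal{Q}})]=e^{\frac{i}{\hbar}\ad_{[\sigma^T(\mathcal{Q})]}}[\sigma_\hbar]$. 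Thus running the formal algorithm one degree at a time, and quantizing each generator, produces after finitely many steps an operator whose symbol has Taylor series $H_2+\kappa+\grandO_{\run}$. Collecting all generators into a single $\mathcal{Q}_\hbar$ (via the Baker--Campbell--Hausdorff formula, which converges in the relevant filtered sense because $\frac{i}{\hbar}\ad$ raises valuation) gives the desired conjugating operator, and one truncates the symbols to compact support near $\Sigma'$ at the cost of a $\grandO((x,\xi,\hbar)^\infty)$ error, which is absorbed into $\Rh$.

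First I would set up the quantization of a single step: given $\rho'\in\mathcal{D}_N$ with real coefficients, choose a compactly supported symbol $q_\hbar$ with $[q_\hbar]=\rho'$, set $\mathcal{Q}_\hbar=\Op(q_\hbar)$ (essentially self-adjoint, bounded), and form the unitary $e^{\frac{i}{\hbar}\mathcal{Q}_\hbar}$. One checks by the composition formula for Weyl symbols that
\[ e^{\frac{i}{\hbar}\mathcal{Q}_\hbar}\Op(\sigma_\hbar)e^{-\frac{i}{\hbar}\mathcal{Q}_\hbar}=\Op(\sigma_\hbar')\,,\qquad [\sigma_\hbar']=e^{\frac{i}{\hbar}\ad_{\rho'}}[\sigma_\hbar]\,, \]
so the formal identity transfers verbatim to symbols. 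Then I would run the induction of Theorem~\ref{thm.formal.bnf} exactly as stated, at each stage $N=3,\dots,\run-1$ solving the cohomological equation $R_N=K_N+\sum_j\widehat\beta_j\frac{i}{\hbar}\ad_{|z_j|^2}\rho'$ via Lemma~\ref{lem.preliminary}, and composing the unitaries. After the last step the symbol's Taylor series is $H_2+\kappa+\grandO_{\run}$ with $\kappa=F(|z_1|^2,\dots,|z_s|^2,\tau,\hbar)$. Recognizing $\Op|z_j|^2=\Ih^{(j)}$ and $\Op H_2=\Op\langle M(w,t)\tau,\tau\rangle+\sum_j\Ih^{(j)}\Op\widehat\beta_j(w,t)$ up to lower-order commutator terms (which must themselves be tracked and reabsorbed into $f_\un^\star$), the quantized function $F$ of the $\Ih^{(j)}$ becomes $\Op f_\un^\star$; the bound $|f_\un^\star|\lesssim(|I|+\hbar)^2+|\tau|(|I|+\hbar)+|\tau|^3$ comes from the fact that $\kappa\in\grandO_3$, so every monomial in $F$ has degree $\geq 3$ in $(I^{1/2},\tau,\hbar^{1/2})$-counting, which after sorting by homogeneity in $(I,\hbar)$ versus $\tau$ yields precisely those three types of terms. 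Finally, the valuation estimate $\frac{i}{\hbar}\ad:\grandO_k\to\grandO_{k+N-2}$ and the fact that a bounded symbol realizing a $\grandO_{\run}$ series is $\grandO((x,\xi,\tau,\hbar^{1/2})^{\run})$ on $U_\un'$ (using that near the microlocalization region $|(x,\xi)|\lesssim\hbar^{1/2}$, so $\hbar$ counts as degree $2$) gives the stated remainder $\Rh$.

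There is a subtlety about what "symbol class" these operators live in: the symbols are polynomial in $(x,\xi,\tau)$ with $\Cinf(U)$ coefficients, so one must either work with an appropriate class of symbols with prescribed polynomial growth in $(x,\xi)$ (a Shubin-type or weighted class), or cut off the symbols to be compactly supported in all variables before quantizing—accepting $\grandO((x,\xi,\hbar)^\infty)$ errors, harmless by the microlocalization remark. I would adopt the cutoff route, as in the references; then $\mathcal{Q}_\hbar$, $\Nh$, $\Rh$ are all bounded pseudodifferential operators with compactly supported symbols, and boundedness of $e^{\frac{i}{\hbar}\mathcal{Q}_\hbar}$ as a unitary on $\Ld$ is immediate. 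The additional quantization in $(y,\eta,t,\tau)$ producing $\Op f_\un^\star$ is exactly the content of the second Remark after Theorem~\ref{thm.first.quantized.normal.form}: $f_\un^\star$ is treated as an operator-valued symbol acting diagonally on the Hermite basis $h_n(x)$, so no new difficulty arises there.

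I expect the main obstacle to be the bookkeeping of the two sources of error that do \emph{not} appear in the purely formal statement: first, the discrepancy $\sigma_\hbar-\widehat H=\grandO(\hbar^2)$ coming from the Egorov step (so the initial data is $[\widehat H]+\grandO(\hbar^2)$, not $[\widehat H]$, but since $\grandO(\hbar^2)\subset\grandO_4\subset\grandO_3$ this is absorbed by taking $\gamma$ to include it); and second, the non-commutativity corrections when re-expressing the quantized function $F(\Ih^{(1)},\dots,\Ih^{(s)},\tau,\hbar)$—in particular the commutators $[\Ih^{(j)},\Op\widehat\beta_j]$, $[\Op\langle M\tau,\tau\rangle,\cdot]$ and the ordering ambiguities—which produce extra $\grandO(\hbar)$ terms that must be shown to have the structure allowed by the bound on $f_\un^\star$ (degree $\geq 2$ in $(|I|+\hbar)$ jointly with $\tau$) and the structure allowed by the remainder $\Rh$. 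Tracking these through the finitely many conjugation steps, and verifying at each stage that the accumulated lower-order terms still satisfy the valuation bounds, is the technical heart of the proof; once it is organized with the filtered algebra $(\mathcal{E}_\un,\star,\grandO_N)$ and the symbol dictionary in hand, the argument closes.
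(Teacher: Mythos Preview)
Your proposal is correct and contains all the necessary ingredients, but it is organized more laboriously than the paper's proof. The paper does not rerun the induction of Theorem~\ref{thm.formal.bnf} at the operator level step by step and then combine the generators via Baker--Campbell--Hausdorff. Instead it uses Theorem~\ref{thm.formal.bnf} as a black box: that theorem already produces a \emph{single} formal series $\rho\in\grandO_3$ such that $e^{\frac{i}{\hbar}\ad_\rho}(H_2+\gamma)=H_2+\kappa+\grandO_{\run}$. One then picks \emph{once} a compactly supported symbol $Q_\hbar$ with $[Q_\hbar]=\rho$, sets $\Qh=\Op Q_\hbar$, and by Egorov the conjugated operator $e^{\frac{i}{\hbar}\Qh}\Op(\sigma_\hbar)e^{-\frac{i}{\hbar}\Qh}$ has a symbol whose Taylor series is exactly $H_2+\kappa+\grandO_{\run}$. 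This bypasses the BCH bookkeeping entirely.

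The second point where the paper is cleaner is the passage from $\kappa$ to $f_\un^\star$. Rather than tracking ordering corrections and commutators $[\Ih^{(j)},\Op\widehat\beta_j]$ as you propose, the paper simply rewrites the formal series $\kappa=\sum c_{\alpha\alpha'\ell}(w,t)\,|z_1|^{2\alpha_1}\cdots|z_s|^{2\alpha_s}\tau^{\alpha'}\hbar^\ell$ in terms of the \emph{star}-monomials $(|z_j|^2)^{\star\alpha_j}$, obtaining new coefficients $c^\star_{\alpha\alpha'\ell}$. A compactly supported $f_\un^\star$ with Taylor series $\sum c^\star_{\alpha\alpha'\ell}(w,t)\,I^\alpha\tau^{\alpha'}\hbar^\ell$ then satisfies, by construction, $\sigma^T\big(\Op f_\un^\star(y,\eta,t,\tau,\Ih^{(1)},\dots,\Ih^{(s)},\hbar)\big)=\kappa$, and no further commutator corrections are needed. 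Your approach would arrive at the same place, but the star-reordering makes the identification of $f_\un^\star$ immediate rather than an inductive tracking exercise.
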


\begin{remark}
As explained below, the principal symbol $Q$ of $\Qh$ is $\grandO((x,\xi,\tau)^3)$. Thus, the symplectic flow $\varphi_t$ associated to the Hamiltonian $Q$ is $\varphi_t(x,\xi,\tau) = (x,\xi, \tau) + \grandO((x,\xi,\tau)^2)$. Moreover, the Egorov Theorem implies that $e^{-\frac{i}{\hbar}\Qh}$ quantizes the symplectomorphism $\varphi_1$. Hence, $\Vh e^{-\frac{i}{\hbar}\Qh}$ quantizes the composed symplectomorphism $\tilde{\Phi}_\un = \Phi_\un \circ \varphi_1 = \Phi_\un + \grandO((x,\xi,\tau)^2)$.
\end{remark}

\begin{proof}
The proof of this lemma follows the exact same lines as in the case $k=0$ (\cite{MagBNF} Theorem 4.1). Let us recall the main arguments. The symbol $\sigma_\hbar$ is equal to $\widehat{H} + \grandO(\hbar^2)$ on $U_\un'$. Thus, its associated formal series is $[ \sigma_\hbar ] = H_2 + \gamma$ for some $\gamma \in \grandO_3$. Using the Birkhoff normal form algorithm (Theorem \ref{thm.formal.bnf}), we get $\kappa$, $\rho \in \grandO_3$ such that
\[ e^{\frac{i}{\hbar}\ad_{\rho}} (H_2 + \gamma) = H_2 + \kappa + \grandO_{\run}\,. \]
If $Q_\hbar$ is a smooth compactly supported symbol with Taylor series $[Q_\hbar] = \rho$, then by the Egorov Theorem the operator
\begin{equation}\label{eq.eQ.sigma.eQ}
 e^{i\hbar^{-1} \Op Q_\hbar} \Op (\sigma_\hbar) e^{-i\hbar^{-1} \Op Q_\hbar} 
\end{equation}
has a symbol with Taylor series $H_2 + \kappa + \grandO_{\run}$. Since $\kappa$ commutes with the oscillator $\vert z_j \vert^2$, it can be written as
\[ \kappa = \sum_{2 \vert \alpha \vert + \vert \alpha' \vert + 2 \ell \geq 3} c_{\alpha \alpha' \ell}(w,t) \vert z_1 \vert^{2\alpha_1} \cdots \vert z_s \vert^{2\alpha_s} \tau_1^{\alpha_1'} \cdots \tau_k^{\alpha_k'} \hbar^{\ell}\,. \]
We can reorder this formal series using the monomials $(\vert z_j \vert^2)^{\star \alpha_j} = \vert z_j \vert^2 \star \cdots \star \vert z_j \vert^2$:
\[ \kappa = \sum_{2 \vert \alpha \vert + \vert \alpha' \vert + 2 \ell \geq 3} c^\star_{\alpha \alpha' \ell}(w,t) ( \vert z_1 \vert^2 )^{\star \alpha_1} \cdots (\vert z_s \vert^2)^{\star\alpha_s} \tau_1^{\alpha_1'} \cdots \tau_k^{\alpha_k'} \hbar^{\ell}\,. \]
If $f_\un^\star$ is a smooth compactly supported function with Taylor series
\[ [f_\un^\star] = \sum_{2 \vert \alpha \vert + \vert \alpha' \vert + 2 \ell \geq 3} c^\star_{\alpha \alpha' \ell}(w,t) I_1^{\alpha_1} \cdots I_s^{\alpha_s} \tau_1^{\alpha_1'} \cdots \tau_k^{\alpha_k'} \hbar^{\ell}\,, \]
then the operator \eqref{eq.eQ.sigma.eQ} is equal to
\[ \Op H_2 + \Op \fsUn(y,\eta,t,\tau,\Ih^{(1)}, \cdots, \Ih^{(s)}, \hbar) \]
modulo $\grandO_{\run}$.
\end{proof}

\section{Comparing the spectra of $\Lh$ and $\Nh$}\label{sec.4}

\subsection{Spectrum of $\Nh$}\label{sec.4.Nh}

In this section we describe the spectral properties of $\Nh$. Since it is a function of harmonic oscillators, we can diagonalize this operator in the following way. For $1 \leq j \leq s$ and $n_j \geq 1$, we recall that the $n_j$-th Hermite function $h^j_{n_j}(x_j)$ is an eigenfunction of $\Ih^{(j)}$:
\[ \Ih^{(j)} h^j_{n_j} = \hbar (2n_j-1) h^{j}_{n_j} \,. \]
Hence, the functions $(h_n)_{n \in \N^s}$ defined by
\[ h_n(x) = h_{n_1}^1 \otimes \cdots \otimes h^s_{n_s} (x) = h_{n_1}^1(x_1) \cdots h^s_{n_s}(x_s) \]
form a Hilbertian basis of $\Ld(\R^s_x)$. Thus, we can use this basis to decompose the space $\Ld(\R^{2s+k}_{x,y,t})$ on which $\Nh$ acts :
\[ \Ld(\R^{2s+k}) = \bigoplus_{n \in \N^s} \left( \Ld(\R^{s+k}_{y,t}) \otimes h_n \right) \,. \]
As a function of the harmonic oscillators, $\Nh$ preserves this decomposition, and
\[ \Nh = \bigoplus_{n \in \N^s} \Nh^{[n]}\,, \] 
where $\Nh^{[n]}$ is the pseudodifferential operator with symbol
\begin{equation}
N_\hbar^{[n]} = \langle M(w,t) \tau, \tau \rangle + \sum_{j=1}^s \widehat{\beta}_j(w,t) (2n_j + 1) \hbar + \fsUn(w,t,\tau,(2n-1)\hbar, \hbar) \,.
\end{equation}
In particular, the spectrum of $\Nh$ is given by
\[ \spectrum (\Nh) = \bigcup_{n \in \N^s}  \spectrum ( \Nh^{[n]} ) \,. \]
Moreover, as in the $k=0$ case, for any $b_1 >0$ there is a $N_{\text{max}}>0$ (independent of $\hbar$) such that
\[ \spectrum (\Nh) \cap (-\infty, b_1 \hbar) = \bigcup_{\vert n \vert \leq N_{\text{max}}}  \spectrum ( \Nh^{[n]} ) \cap (-\infty, b_1 \hbar) \,. \]
The reason is that the symbol $N_\hbar^{[n]}$ is greater than $b_1 \hbar$ for $n$ large enough. Finally, to prove our main theorem \ref{thm.main.first.bnf} it remains to compare the spectra of $\Lh$ and $\Nh$.

\subsection{Microlocalization of the eigenfunctions}\label{sec.4.microloc}

In this section we prove microlocalization results for the eigenfunctions of $\Lh$ and $\Nh$. These results are needed to show that the remainders $\grandO((x,\xi,\tau)^{\run})$ we got are small. More precisely, for each operator we need to prove that the eigenfunctions are microlocalized:
\begin{enumerate}
\item[•] inside $\Omega$ (space localization),
\item[•] where $\vert (x,\xi,\tau) \vert \lesssim \hbar^\delta$ for $\delta \in (0,\frac 1 2)$ (i.e. close to $\Sigma$).
\end{enumerate}
Fix $\tilde{b}_1$ such that 
\[K_{\tilde{b}_1} = \lbrace q \in M\,, b(q) \leq \tilde{b}_1 \rbrace \subset \subset \Omega \,. \]

\begin{lemma}[Space localization for $\Lh$]
Let $b_1 \in (b_0, \tilde{b}_1)$ and $\chi \in \Cinf_0(M)$ be a cutoff function such that $\chi=1$ on $K_{\tilde{b}_1}$. Then every normalized eigenfunction $\psi_\hbar$ of $\Lh$ associated with an eigenvalue $\lambda_\hbar \leq b_1 \hbar$ satisfies:
\[ \psi_\hbar = \chi_0 \psi_\hbar + \grandO(\hbar^{\infty}) \,, \]
where the $\grandO(\hbar^\infty)$ is independant of $(\lambda_\hbar, \psi_\hbar)$.
\end{lemma}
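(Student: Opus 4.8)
The plan is to use the fundamental lower bound \eqref{eq.minoration.par.b} together with the assumption $b_\infty > b_0$ to show that a normalized eigenfunction of $\Lh$ with eigenvalue $\leq b_1\hbar$ cannot have mass where $b(q)$ is bounded away from $b_0$, and in particular cannot have mass outside $K_{\tilde b_1}$. The starting point is the IMS-type localization formula: given any smooth partition $\chi^2 + \bar\chi^2 = 1$ (with $\bar\chi$ supported outside $K_{\tilde b_1}$), one has the identity $\langle \Lh\psi_\hbar,\psi_\hbar\rangle = \langle \Lh(\chi\psi_\hbar),\chi\psi_\hbar\rangle + \langle \Lh(\bar\chi\psi_\hbar),\bar\chi\psi_\hbar\rangle - \hbar^2(\|\,|\dd\chi|\,\psi_\hbar\|^2 + \|\,|\dd\bar\chi|\,\psi_\hbar\|^2)$, which follows from expanding $(i\hbar\dd+A)(\chi\psi_\hbar) = \chi(i\hbar\dd+A)\psi_\hbar + i\hbar(\dd\chi)\psi_\hbar$ and using $\chi\,\dd\chi + \bar\chi\,\dd\bar\chi = 0$. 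Since $\langle\Lh\psi_\hbar,\psi_\hbar\rangle = \lambda_\hbar \leq b_1\hbar$ and $\langle\Lh(\chi\psi_\hbar),\chi\psi_\hbar\rangle\geq 0$, this gives $\langle \Lh(\bar\chi\psi_\hbar),\bar\chi\psi_\hbar\rangle \leq b_1\hbar\,\|\bar\chi\psi_\hbar\|^2 + C\hbar^2\|\psi_\hbar\|^2 \leq b_1\hbar\,\|\bar\chi\psi_\hbar\|^2 + C\hbar^2$.

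Next I would apply \eqref{eq.minoration.par.b} to the function $u = \bar\chi\psi_\hbar \in \Dom(\Lh)$. On the support of $\bar\chi$, which lies outside $K_{\tilde b_1}$, we have $b(q)\geq \tilde b_1$ (in the non-compact case one also uses $b_\infty > b_0$ to handle the region near infinity, together with Assumption \ref{assump.loc}; $\tilde b_1$ is chosen strictly below $b_\infty$). Hence \eqref{eq.minoration.par.b} yields
\[ (1 + \hbar^{1/4}C_0)\langle \Lh(\bar\chi\psi_\hbar),\bar\chi\psi_\hbar\rangle \geq \hbar(\tilde b_1 - \hbar^{1/4}C_0)\|\bar\chi\psi_\hbar\|^2 . \]
Combining this with the upper bound from the previous step,
\[ \hbar(\tilde b_1 - \hbar^{1/4}C_0)\|\bar\chi\psi_\hbar\|^2 \leq (1+\hbar^{1/4}C_0)\bigl(b_1\hbar\,\|\bar\chi\psi_\hbar\|^2 + C\hbar^2\bigr) . \]
Since $b_1 < \tilde b_1$, for $\hbar$ small enough the coefficient of $\|\bar\chi\psi_\hbar\|^2$ on the left strictly exceeds the one on the right by a fixed amount $\sim (\tilde b_1 - b_1)\hbar$, so we obtain $\|\bar\chi\psi_\hbar\|^2 \leq C'\hbar\|\psi_\hbar\|^2 = C'\hbar$. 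This already gives a polynomial bound; the constant $C'$ and the threshold on $\hbar$ are uniform in $(\lambda_\hbar,\psi_\hbar)$ because only the eigenvalue bound $\lambda_\hbar\leq b_1\hbar$ was used.

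Finally, to upgrade $\grandO(\hbar^{1/2})$ to $\grandO(\hbar^\infty)$ I would run a standard bootstrap: introduce a nested family of cutoffs $\bar\chi_m$ supported progressively further from $K_{\tilde b_1}$ with $\bar\chi_{m+1} = 1$ on $\supp\bar\chi_m$, and iterate the above estimate, at each stage feeding the improved bound $\|\bar\chi_m\psi_\hbar\| = \grandO(\hbar^{m/2})$ on the slightly larger region into the localization formula on the slightly smaller one (the error terms $\hbar^2\|\,|\dd\bar\chi_m|\psi_\hbar\|^2$ then also improve). Equivalently, and more cleanly, one proves Agmon exponential decay: the quadratic form estimate shows $\|e^{\Phi/\hbar^{1/2}}\psi_\hbar\|$ is bounded for a suitable Lipschitz weight $\Phi$ vanishing on $\{b\leq b_1\}$ and positive outside, which gives the $\grandO(\hbar^\infty)$ (indeed exponentially small) bound on any region where $b > b_1$, in particular on $M\setminus K_{\tilde b_1}$; this is exactly the Agmon estimate alluded to in the remark after \eqref{eq.minoration.par.b}. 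The main obstacle is the non-compact case: one must check that \eqref{eq.minoration.par.b} is genuinely effective near infinity, i.e. that the competition between $b_\infty > b_0$ and the eigenvalue threshold $b_1\hbar$ survives the $\hbar^{1/4}C_0$ corrections and the derivative bound on $\B$ in Assumption \ref{assump.loc}; the compact case is comparatively immediate since $b$ attains its minimum and $K_{\tilde b_1}\subset\subset\Omega$ is automatic.
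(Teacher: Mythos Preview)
Your proposal is correct and follows the same line as the paper, which simply invokes the Agmon estimate $\Vert e^{d(q,K_{\tilde b_1})\hbar^{-1/4}}\psi_\hbar\Vert \leq C\Vert\psi_\hbar\Vert$ (citing the $k=0$ case) to conclude $\Vert(1-\chi_0)\psi_\hbar\Vert \leq Ce^{-\varepsilon\hbar^{-1/4}}$; your IMS-localization plus bootstrap is precisely how one proves such an estimate. One small correction: the Agmon weight compatible with inequality \eqref{eq.minoration.par.b} should be $e^{\Phi/\hbar^{1/4}}$ rather than $e^{\Phi/\hbar^{1/2}}$, reflecting the $\hbar^{1/4}$ error terms there.
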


\begin{proof}
This follows from the Agmon estimates, 
\begin{equation}\label{eq.Agmon}
\Vert e^{d(q,K_{\tilde b_1})\hbar^{-1/4}} \psi_\hbar \Vert \leq C \Vert \psi_\hbar \Vert^2\,,
\end{equation}
as in the $k=0$ case (in \cite{MagBNF}). Indeed, from \eqref{eq.Agmon} we deduce
\[ \Vert (1-\chi_0) \psi \Vert \leq C e^{-\varepsilon \hbar^{-1/4}} \Vert \psi_\hbar \Vert \,, \]
as soon as $\chi_0 = 1$ on a $\varepsilon$-neighborhood of $K_{\tilde{b}_1}$.
\end{proof}

\begin{lemma}[Microlocalization near $\Sigma$ for $\Lh$]\label{Chap4-Lemme-MicrolocLh}
Let $\delta \in (0, \frac{1}{2})$, $b_1 \in (b_0, \tilde b_1)$ and 
\newline $\chi_1 \in \Cinf(T^*M)$ be a cutoff function equal to one on a neighborhood of $\Sigma$. Then every eigenfunction $\psi_\hbar$ of $\Lh$ associated with an eigenvalue $\lambda_\hbar \leq b_1 \hbar$ satisfies:
\[ \psi_\hbar = \Op \chi_1(\hbar^{-\delta}(q,p)) \psi_\hbar + \grandO(\hbar^\infty) \psi_\hbar \,,\]
where the $\grandO(\hbar^\infty)$ is in $\mathcal{L}(\Ld, \Ld)$ and independant of $(\lambda_\hbar,\psi_\hbar)$.
\end{lemma}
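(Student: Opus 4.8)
The plan is to exploit the ellipticity of $\Lh - \lambda_\hbar$ away from $\Sigma$, combined with the fundamental lower bound \eqref{eq.minoration.par.b} which forces the relevant part of phase space to lie close to $\Sigma$. I would work after the space localization lemma, so $\psi_\hbar = \chi_0 \psi_\hbar + \grandO(\hbar^\infty)$ and we may treat everything on $T^*\Omega \subset T^*\R^d$, where $\Lh = \Op H$ with $H(q,p) = |p-A_q|^2_{g_q^*} + \grandO(\hbar^2)$. The first step is to observe that on the support of $1 - \chi_1(\hbar^{-\delta}(q,p))$, by homogeneity, $|(q,p) - \Sigma| \gtrsim \hbar^\delta$, so $H(q,p) \gtrsim \hbar^{2\delta}$ there, whereas $\lambda_\hbar \leq b_1\hbar \ll \hbar^{2\delta}$ since $\delta < 1/2$. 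Hence $H - \lambda_\hbar$ is elliptic (of the relevant "partially semiclassical" class with parameter $\hbar^\delta$) on the support of that cutoff.

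The key step is then a parametrix / elliptic-regularity argument adapted to the non-standard symbol class. I would introduce the rescaled variables $(\tilde q, \tilde p) = \hbar^{-\delta}(q - q_0, p - A_{q_0})$ (or more precisely the coordinates adapted to $\Sigma$), in which $\hbar^{-2\delta} H$ becomes a symbol in an effective semiclassical calculus with small parameter $\tilde\hbar := \hbar^{1-2\delta}$; on $\supp(1-\chi_1(\tilde q, \tilde p))$ this rescaled symbol is bounded below. One then builds a parametrix $\mathcal{Q}_\hbar$ with $\mathcal{Q}_\hbar(\Lh - \lambda_\hbar) = \Op\bigl(1 - \chi_1(\hbar^{-\delta}(q,p))\bigr) + \grandO(\hbar^\infty)$, using the standard Neumann-series construction for elliptic operators. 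Applying both sides to $\psi_\hbar$ and using $(\Lh - \lambda_\hbar)\psi_\hbar = 0$ gives $\Op(1 - \chi_1(\hbar^{-\delta}\cdot))\psi_\hbar = \grandO(\hbar^\infty)\psi_\hbar$, which rearranges to the claimed identity. Uniformity in $(\lambda_\hbar, \psi_\hbar)$ follows because all the symbol estimates entering the parametrix depend only on the bound $\lambda_\hbar \leq b_1\hbar$, not on the particular eigenpair.

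The main obstacle is making the pseudodifferential calculus rigorous in the degenerate scaling: the cutoff $\chi_1(\hbar^{-\delta}(q,p))$ is not a symbol in the standard semiclassical class $S(1)$ because its derivatives pick up powers of $\hbar^{-\delta}$, so one is really working in a class like $S_\delta$ (or the anisotropic class of \cite{MagBNF}) where the remainder gains $\hbar^{1-2\delta}$ per order rather than $\hbar$. One must check that this class is still an algebra, that ellipticity yields a parametrix with $\grandO((\hbar^{1-2\delta})^\infty) = \grandO(\hbar^\infty)$ remainders (valid precisely because $\delta < 1/2$), and that the lower bound \eqref{eq.minoration.par.b} — which only controls the $b(q)\hbar$ part — can be upgraded to genuine ellipticity of $H - \lambda_\hbar$ in all of the $(x,\xi,\tau)$ directions, not merely the $\tau$ directions. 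The latter point is where Theorem \ref{thm.H.rond.Phi} enters: in the coordinates $\Phi_\un$, $\widehat H = \langle M\tau,\tau\rangle + \sum_j \widehat\beta_j|z_j|^2 + \grandO((x,\xi,\tau)^3)$ is, up to the cubic remainder, a positive-definite quadratic form in $(x,\xi,\tau)$, so away from $\Sigma$ at scale $\hbar^\delta$ it is bounded below by $c\hbar^{2\delta}$, and the cubic remainder is lower order. Since this is essentially the argument already carried out for $k=0$ in \cite{MagBNF}, I would present the adaptation concisely, emphasizing only that the extra $\tau$-directions are handled identically thanks to the positivity of $M$.
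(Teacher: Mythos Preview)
Your parametrix approach is correct and would yield the lemma, but it is a genuinely different route from the paper's. The paper does \emph{not} build a microlocal inverse in an $S_\delta$ class; instead it argues by an energy/commutator bootstrap: writing $\psi_\hbar = g_\hbar(\Lh)\psi_\hbar$ for a spectral cutoff $g_\hbar$ supported below $\tilde b_1\hbar$, it expands $\langle \Lh \chi^w\varphi,\chi^w\varphi\rangle$ with $\chi = 1-\chi_1(\hbar^{-\delta}\cdot)$, bounds the right-hand side by $\tilde b_1\hbar\|\chi^w\varphi\|^2 + C\hbar\|\underline\chi^w\varphi\|\|\chi^w\varphi\|$ using the commutator $[\Lh,\chi^w]$, bounds the left-hand side from below by $c\hbar^{2\delta}\|\chi^w\varphi\|^2$ via G\aa rding, and deduces $\|\chi^w\varphi\|\le C\hbar^{1-2\delta}\|\underline\chi^w\varphi\|$. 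Iterating with a nested family of cutoffs gives $\grandO(\hbar^\infty)$. The advantage of the paper's method is that it is elementary (G\aa rding plus commutators, no $S_\delta$ parametrix machinery), and the uniformity in $(\lambda_\hbar,\psi_\hbar)$ drops out of the functional-calculus writing $\psi_\hbar = g_\hbar(\Lh)\psi_\hbar$. The advantage of yours is that it is the conceptually cleanest microlocal argument once the $S_\delta$ calculus is in place.

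Two minor points about your write-up. First, neither inequality \eqref{eq.minoration.par.b} nor Theorem \ref{thm.H.rond.Phi} is needed for the ellipticity here: the bound $H(q,p)\ge c\hbar^{2\delta}$ on $\supp(1-\chi_1(\hbar^{-\delta}\cdot))$ is immediate from $H(q,p)=|p-A_q|^2_{g_q^*}$ and the nondegeneracy of the metric, so you can drop the discussion of ``upgrading'' \eqref{eq.minoration.par.b} and of the coordinates $\Phi_\un$. Second, the rescaling to an effective parameter $\tilde\hbar=\hbar^{1-2\delta}$ is heuristic and need not be made literal; working directly in the $S_\delta$ class on $T^*\Omega$ is cleaner and avoids the anisotropy issue you allude to.
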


\begin{proof}
Let $g_\hbar \in \Cinf_0(\R)$ be such that
\[ g_{\hbar}(\lambda) =
\begin{cases}
1 \quad \text{si } \lambda \leq b_1 \hbar \,, \\
0 \quad \text{si } \lambda \geq \tilde{b}_1 \hbar \,.
\end{cases} \]
Then the eigenfunction $\psi_\hbar$ satisfies
\[ \psi_{\hbar} = g_{\hbar}(\lambda_{\hbar}) \psi_{\hbar} = g_{\hbar}(\Lh) \psi_{\hbar} \,. \]
Denoting $\chi = 1-\chi_1$, we will prove that
\begin{equation}\label{eq.micro.chi.g}
\Vert \Op \chi( \hbar^{-\delta} (q,p) ) g_{\hbar}(\Lh) \Vert_{\mathcal{L}(\Ld, \Ld)} = \grandO(\hbar^{\infty}),
\end{equation}
from which will follow $\psi_{\hbar} = \Op \chi_1( \hbar^{-\delta} (q,p)) \psi_{\hbar} + \grandO(\hbar^{\infty})\psi_\hbar$, uniformly with respect to $(\lambda_\hbar, \psi_\hbar)$.
\newline
\indent To lighten the notations, we define $\chi^w := \Op \chi(\hbar^{-\delta}(q,p))$. For every $\psi \in \Ld(M)$ we define $\varphi = g_\hbar(\Lh) \psi$. Then,
\begin{equation}\label{Chap4-eq-f40}
\langle \Lh \chi^w \varphi, \chi^w \varphi \rangle = \langle \chi^w \Lh \varphi, \chi^w \varphi \rangle + \langle \big{[} \Lh, \chi^w \big{]} \varphi, \chi^w \varphi \rangle.
\end{equation}
We will bound from above the right-hand side, and from below the left-hand side. First, since $g_\hbar(\lambda)$ is supported where $\lambda \leq \tilde{b}_1 \hbar$ we have,
\begin{equation}\label{Chap4-eq-f41}
\langle \chi^w \Lh \varphi, \chi^w \varphi \rangle \leq \tilde{b}_1 \hbar \Vert \chi^w \varphi \Vert^2.
\end{equation}
Moreover, the commutator $\big{[} \Lh, \chi^w \big{]}$ is a pseudodifferential operator of order $\hbar$, with symbol supported on $\mathsf{supp} \chi$. Hence, if $\underline{\chi}$ is a cutoff function having the same general properties of $\chi$, such that $\underline{\chi} = 1$ on $\mathsf{supp} \chi$, we have:
\begin{equation}\label{Chap4-eq-f42}
\langle \big{[} \Lh, \chi^w \big{]} \varphi, \chi^w \varphi \rangle \leq C \hbar \Vert \underline{\chi}^w \varphi \Vert \Vert \chi^w \varphi \Vert.
\end{equation}
Finally, the symbol of $\chi^w$ is equal to $0$ on a $\hbar^\delta$-neighborhood of $\Sigma$ ; and thus the symbol $\vert p - A(q) \vert^2$ of $\Lh$  is $\geq c \hbar^{2\delta}$ on the support of $\chi^w$. Hence the G\aa rding inequality yield
\begin{equation}
\langle \Lh \chi^w \varphi, \chi^w \varphi \rangle \geq c \hbar^{2 \delta} \Vert \chi^w \varphi \Vert^2.
\end{equation}
Using this last inequality in \eqref{Chap4-eq-f40}, and bounding the right-hand side with \eqref{Chap4-eq-f41} and \eqref{Chap4-eq-f42} we get to
\begin{equation*}
c \hbar^{2\delta} \Vert \chi^w \varphi \Vert^2 \leq \tilde{b}_1 \hbar \Vert \chi^w \varphi \Vert^2 + C \hbar \Vert \underline{\chi}^w \varphi \Vert \Vert \chi^w \varphi \Vert\,,
\end{equation*}
and we deduce that
$$ \Vert \chi^w \varphi \Vert \leq C \hbar^{1-2 \delta} \Vert \underline{\chi}^w \varphi \Vert.$$
Iterating with $\underline{\chi}$ instead of $\chi$, we finally get for arbitrarily large $N >0$,
$$\Vert \chi^w \varphi \Vert \leq C_N \hbar^N \Vert \varphi \Vert.$$
This is true for every $\psi$, with $\varphi = g_\hbar(\Lh) \psi$ and thus $\Vert \chi^w g_\hbar(\Lh) \Vert = \grandO(\hbar^\infty)$.
\end{proof}

\begin{lemma}[Microlocalization near $\Sigma$ for $\Nh$]\label{Chap4-Lemme-MicrolocNh1}
Let $\delta \in (0,\frac 1 2)$, $b_1 \in (b_0, \tilde{b}_1)$ and $\chi_1 \in \Cinf_0(\R^{2s+k}_{x,\xi,\tau})$ be a cutoff function equal to $1$ on a neighborhood of $0$. Then every eigenfunction $\psi_\hbar$ of $\Nh$ associated with an eigenvalue $\lambda_\hbar \leq b_1 \hbar$ satisfies:
\[ \psi_{\hbar} = \Op \chi_1( \hbar^{-\delta} (x,\xi,\tau)) + \grandO(\hbar^{\infty}) \psi_{\hbar}\,, \]
where the $\grandO(\hbar^\infty)$ is in $\mathcal{L}(\Ld, \Ld)$ and independant of $(\lambda_\hbar,\psi_\hbar)$.
\end{lemma}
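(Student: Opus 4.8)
The strategy is to mimic the proof of Lemma \ref{Chap4-Lemme-MicrolocLh} (microlocalization near $\Sigma$ for $\Lh$) but now working directly with the model operator $\Nh$, whose symbol is explicit near $(x,\xi,\tau)=0$. The key point is that the symbol of $\Nh$,
\[ N_\hbar(w,t,\tau,z) = \langle M(w,t)\tau,\tau\rangle + \sum_{j=1}^s \widehat{\beta}_j(w,t)(\xi_j^2+x_j^2) + \fsUn(y,\eta,t,\tau, (\xi_j^2+x_j^2)_j,\hbar) \,, \]
is, on a neighborhood of $\Sigma' = \{x=\xi=0,\ \tau=0\}$, bounded below by a positive constant times $\vert(x,\xi,\tau)\vert^2$. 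This is because $M$ is positive definite, the $\widehat{\beta}_j$ are $\geq b_0/s > 0$ on $U$, and the remainder $\fsUn$ is cubic in $\vert(x,\xi,\tau)\vert$ up to the $\hbar$-terms, so it is absorbed for $\vert(x,\xi,\tau)\vert$ small. Hence on the support of $\chi^w := \Op\chi(\hbar^{-\delta}(x,\xi,\tau))$ with $\chi = 1-\chi_1$, we have $N_\hbar \geq c\hbar^{2\delta}$ for $\hbar$ small.

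First I would introduce, exactly as before, a smooth cutoff $g_\hbar \in \Cinf_0(\R)$ equal to $1$ on $(-\infty, b_1\hbar]$ and $0$ on $[\tilde b_1 \hbar, +\infty)$, so that any eigenfunction $\psi_\hbar$ of $\Nh$ with eigenvalue $\leq b_1\hbar$ satisfies $\psi_\hbar = g_\hbar(\Nh)\psi_\hbar$. For arbitrary $\psi \in \Ld$, set $\varphi = g_\hbar(\Nh)\psi$ and expand
\[ \langle \Nh \chi^w \varphi, \chi^w \varphi \rangle = \langle \chi^w \Nh \varphi, \chi^w \varphi \rangle + \langle [\Nh,\chi^w]\varphi, \chi^w \varphi \rangle \,. \]
The right-hand side is bounded above using the spectral support of $g_\hbar$ (giving $\tilde b_1 \hbar \Vert \chi^w\varphi\Vert^2$) and the fact that $[\Nh,\chi^w]$ is an $\grandO(\hbar^{1-\delta})$-operator (from the $\hbar$ in the commutator and the $\hbar^{-\delta}$ in $\chi$) — or, more favorably, a symbol calculus estimate of the form $C\hbar^{1-\delta}\Vert\underline\chi^w\varphi\Vert\Vert\chi^w\varphi\Vert$ where $\underline\chi = 1$ on $\supp\chi$; one must check that $2\delta < 1-\delta$, i.e. $\delta < 1/3$ — if the paper needs the full range $\delta<1/2$ a slightly finer commutator bound (using that the derivative of $\chi(\hbar^{-\delta}\cdot)$ is supported in the same region and that $\Nh$ has symbol $\grandO(\hbar^{2\delta})$ there too) gives $C\hbar^{1+\delta}$, which suffices. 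The left-hand side is bounded below by the G\aa rding inequality, using the positivity of $N_\hbar$ on $\supp\chi^w$, giving $c\hbar^{2\delta}\Vert\chi^w\varphi\Vert^2$.

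Combining the two bounds yields $\Vert\chi^w\varphi\Vert \lesssim \hbar^{1-3\delta}\Vert\underline\chi^w\varphi\Vert$ (or $\hbar^{1-\delta}\Vert\underline\chi^w\varphi\Vert$ with the refined commutator estimate), and since $\underline\chi$ can be taken with the same properties as $\chi$, a bootstrap/iteration argument — replacing $\chi$ by $\underline\chi$, $\underline\chi$ by a further $\underline{\underline\chi}$, etc. — gives $\Vert\chi^w\varphi\Vert \leq C_N \hbar^N \Vert\varphi\Vert$ for every $N$. This holds for all $\psi$, hence $\Vert\chi^w g_\hbar(\Nh)\Vert_{\mathcal{L}(\Ld,\Ld)} = \grandO(\hbar^\infty)$, and applying this to $\psi_\hbar = g_\hbar(\Nh)\psi_\hbar$ gives the claim $\psi_\hbar = \Op\chi_1(\hbar^{-\delta}(x,\xi,\tau))\psi_\hbar + \grandO(\hbar^\infty)\psi_\hbar$. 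I expect the main obstacle to be purely technical: verifying the lower bound $N_\hbar \geq c\vert(x,\xi,\tau)\vert^2$ uniformly in $(w,t)\in U$ with the correct control of the $\fsUn$ remainder (using its stated estimate $\vert\fsUn\vert \lesssim (\vert I\vert+\hbar)^2 + \vert\tau\vert(\vert I\vert+\hbar)+\vert\tau\vert^3$, which near $\Sigma$ with $I = \vert z\vert^2$ is indeed $o(\vert(x,\xi,\tau)\vert^2) + \grandO(\hbar^2)$), and checking that the operator-valued symbol calculus in the $(y,\eta,t,\tau)$ variables — complicated by the harmonic-oscillator-valued symbol — still produces the needed commutator estimates; but both are routine given the explicit structure of $\Nh$ and the arguments already used for $\Lh$.
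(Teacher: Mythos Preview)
Your proposal is correct and follows essentially the same approach as the paper's proof: introduce $g_\hbar(\Nh)$, use the commutator identity for $\langle \Nh \chi^w\varphi,\chi^w\varphi\rangle$, bound the right-hand side by the spectral cutoff and a commutator estimate, and bound the left-hand side from below via G\aa rding using that the symbol of $\Nh$ is $\geq c\hbar^{2\delta}$ on $\supp\chi$, then iterate. The only cosmetic difference is that the paper isolates the quadratic part explicitly, writing $\langle \Nh\chi^w\varphi,\chi^w\varphi\rangle \geq (1-\varepsilon)\langle \mathcal{H}_2\chi^w\varphi,\chi^w\varphi\rangle$ with $\mathcal{H}_2=\Op\big(\langle M(w,t)\tau,\tau\rangle+\sum_j\widehat\beta_j(w,t)\vert z_j\vert^2\big)$ before applying G\aa rding, whereas you argue directly on the full symbol of $\Nh$; and your worry about $\delta<1/3$ is unnecessary, since (as you yourself note) on $\supp\nabla\chi(\hbar^{-\delta}\cdot)$ the derivatives of the quadratic part of $N_\hbar$ are $\grandO(\hbar^\delta)$, which compensates the $\hbar^{-\delta}$ from differentiating $\chi$ and gives the commutator bound $\grandO(\hbar)$ for the full range $\delta\in(0,\tfrac12)$, exactly as in the $\Lh$ lemma.
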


\begin{proof}
Just as in the previous Lemma, it is enough to show that
\[ \Vert \chi^w g_\hbar(\Nh) \Vert = \grandO(\hbar^\infty) \]
where $\chi^w = \Op (1-\chi_1)(\hbar^{-\delta}(x,\xi,\tau))$. We prove this using the same method. If $\psi \in \Ld(\R^d)$ and $\varphi = g_\hbar(\Nh) \psi_\hbar$, 
\begin{equation}
\langle \Nh \chi^w \varphi , \chi^w \varphi \rangle = \langle \chi^w \Nh \varphi, \chi^w \varphi \rangle + \langle \big{[} \Nh, \chi^w \big{]} \varphi, \chi^w \varphi \rangle.
\end{equation}
One can bound the right-hand side from above as before, and found $\varepsilon >0$ such that
\begin{equation}
\langle \Nh \chi^w \varphi, \chi^w \varphi \rangle \geq (1-\varepsilon) \langle \mathcal{H}_2 \chi^w \varphi, \chi^w \varphi \rangle,
\end{equation}
with $\mathcal{H}_2 = \Op \left( \langle M(w,t)\tau,\tau \rangle + \sum \widehat \beta_j(w,t) \vert z_j \vert^2 \right)$. The symbol of $\chi^w$ vanishes on a $\hbar^\delta$-neighborhood of $x=\xi=\tau=0$. Thus we can bound from below the symbol of $\mathcal{H}_2$ and use the G\aa rding inequality:
\[ \langle \mathcal{H}_2 \chi^w \varphi, \chi^w \varphi \rangle \geq c \hbar^{2 \delta} \Vert \chi^w \varphi \Vert^2 \,. \]
We conclude the proof as in Lemma \ref{Chap4-Lemme-MicrolocLh}.
\end{proof}

\begin{lemma}[Space localization for $\Nh$] \label{Chap4-Lemme-MicrolocNh2} Let $b_1 \in (b_0, \tilde b _1)$ and $\chi_0 \in \Cinf_0(\R^{2s+k}_{y,\eta,t})$ be a cutoff function equal to $1$ on a neighborhood of $\lbrace \widehat{b}(y,\eta,t) \leq \tilde{b}_1 \rbrace$. Then every eigenfunction $\psi_\hbar$ of $\Nh$ associated with an eigenvalue $\lambda_\hbar \leq b_1 \hbar$ satisfies:
\[ \psi_{\hbar} = \Op \chi_0(w,t) \psi_{\hbar} + \grandO(\hbar^{\infty}) \psi_{\hbar} \,, \] 
where the $\grandO(\hbar^\infty)$ is in $\mathcal{L}(\Ld, \Ld)$ and independent of $(\lambda_\hbar,\psi_\hbar)$.
\end{lemma}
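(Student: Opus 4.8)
The proof mirrors that of Lemma~\ref{Chap4-Lemme-MicrolocLh}; the only new feature is the presence of the $\tau$--variables. Fix an intermediate threshold $b_1' \in (b_1,\tilde{b}_1)$ and a cutoff $g_\hbar \in \Cinf_0(\R)$ equal to $1$ on $(-\infty,b_1\hbar]$ and to $0$ on $[b_1'\hbar,+\infty)$, so that any eigenfunction with $\lambda_\hbar \le b_1\hbar$ satisfies $\psi_\hbar = g_\hbar(\Nh)\psi_\hbar$. It is then enough to prove
\[ \bigl\Vert \Op\bigl((1-\chi_0)(w,t)\bigr)\, g_\hbar(\Nh) \bigr\Vert_{\mathcal L(\Ld,\Ld)} = \grandO(\hbar^\infty), \]
since then $\psi_\hbar = \Op\chi_0(w,t)\psi_\hbar + \grandO(\hbar^\infty)\psi_\hbar$. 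As $\Nh = \bigoplus_n \Nh^{[n]}$ (Section~\ref{sec.4.Nh}) and $\Op\bigl((1-\chi_0)(w,t)\bigr)$ acts only on the $(y,t)$--variables, it commutes with this decomposition; moreover $g_\hbar(\Nh^{[n]}) = 0$ once $|n| > N_{\text{max}}$, by the bound on the symbol $N_\hbar^{[n]}$ recalled in Section~\ref{sec.4.Nh}. So it suffices to estimate $\bigl\Vert \Op\bigl((1-\chi_0)(w,t)\bigr)\, g_\hbar(\Nh^{[n]})\bigr\Vert$ for each of the finitely many $n$ with $|n| \le N_{\text{max}}$.

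For such an $n$, I would first invoke Lemma~\ref{Chap4-Lemme-MicrolocNh1} to reduce, up to $\grandO(\hbar^\infty)$, to the regime $|(x,\xi,\tau)| \lesssim \hbar^\delta$, i.e. to the cutoff $\chi^w := \Op\bigl((1-\chi_0)(w,t)\bigr)\,\Op\theta(\hbar^{-\delta}(x,\xi,\tau))$ for a $\theta$ equal to one near the origin. On $\supp\bigl((1-\chi_0)(w,t)\bigr)$ one has $\widehat{b}(w,t) \ge \tilde{b}_1$ by the choice of $\chi_0$, hence $\sum_{j=1}^s \widehat{\beta}_j(w,t)(2n_j-1) \ge \widehat{b}(w,t) \ge \tilde{b}_1$ since $2n_j-1 \ge 1$. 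Combining this with the bound $|f_{\mathbf{1}}^\star| \lesssim (|I|+\hbar)^2 + |\tau|(|I|+\hbar) + |\tau|^3$ at $I = (2n-1)\hbar = \grandO(\hbar)$, and with the \emph{positive definiteness} of $M$ from Theorem~\ref{thm.H.rond.Phi} — which lets one absorb the cubic term, $\langle M(w,t)\tau,\tau\rangle - C|\tau|^3 \ge \tfrac12\langle M(w,t)\tau,\tau\rangle \ge 0$ for $|\tau|$ small, and then $\tfrac12\langle M(w,t)\tau,\tau\rangle - C\hbar|\tau| \ge -C\hbar^2$ — one obtains, on the support of the symbol of $\chi^w$,
\[ N_\hbar^{[n]}(w,t,\tau) \geq \hbar\,\widehat{b}(w,t) - C\hbar^2 \geq \tilde{b}_1\hbar - C\hbar^2 > b_1'\hbar \]
for $\hbar$ small enough. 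Thus the symbol of $\Nh^{[n]}$ exceeds $b_1'\hbar$ exactly where $\chi^w$ localizes, whereas $g_\hbar(\Nh^{[n]})$ is spectrally supported below $b_1'\hbar$: this is the $\grandO(\hbar)$ gap on which the argument rests.

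It then remains to run the quadratic-form iteration of Lemma~\ref{Chap4-Lemme-MicrolocLh}. With $\varphi = g_\hbar(\Nh^{[n]})\psi$, one expands
\[ \langle \Nh^{[n]}\chi^w\varphi, \chi^w\varphi\rangle = \langle \chi^w\Nh^{[n]}\varphi, \chi^w\varphi\rangle + \langle [\Nh^{[n]},\chi^w]\varphi,\chi^w\varphi\rangle, \]
bounds the first right-hand term by $b_1'\hbar\Vert\chi^w\varphi\Vert^2$, the commutator — which on $|\tau| \lesssim \hbar^\delta$ is of size $\hbar^{1+\delta}$ — by $C\hbar^{1+\delta}\Vert\underline{\chi}^w\varphi\Vert\,\Vert\chi^w\varphi\Vert$ with $\underline{\chi}$ an enlarged cutoff, and the left-hand side from below by $(\tilde{b}_1 - C\hbar^\delta)\hbar\Vert\chi^w\varphi\Vert^2$ via the Gårding inequality applied to the symbol lower bound above (factoring out $\hbar$ from the leading term $\hbar\sum_j\widehat{\beta}_j(2n_j-1) \ge \tilde{b}_1\hbar$ and using $\Op\langle M\tau,\tau\rangle \ge -C\hbar^2$). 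This yields $\Vert\chi^w\varphi\Vert \le C\hbar^{\delta}\Vert\underline{\chi}^w\varphi\Vert$, and iterating with $\underline{\chi}$ in place of $\chi$ gives $\Vert\chi^w\varphi\Vert \le C_N\hbar^N\Vert\varphi\Vert$ for all $N$, hence the statement with a $\grandO(\hbar^\infty)$ independent of $(\lambda_\hbar,\psi_\hbar)$. The main obstacle is the symbol estimate of the middle step: one must keep the thresholds $b_0 < b_1 < b_1' < \tilde{b}_1$ apart so that an honest $\grandO(\hbar)$ spectral gap survives, and use the positivity of $M$ to absorb the cubic remainder $|\tau|^3$ of $f_{\mathbf{1}}^\star$ — which is only $\grandO(\hbar^{3\delta})$, hence a priori larger than $\hbar$ — into the non-negative quadratic form $\langle M\tau,\tau\rangle$; everything else is a transcription of the $k=0$ case.
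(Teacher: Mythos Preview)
Your proof is correct and follows the same strategy as the paper: reduce to the finitely many blocks $\Nh^{[n]}$, use the microlocalization near $\Sigma$ (Lemma~\ref{Chap4-Lemme-MicrolocNh1}) to control the $\tau$-contributions, and run the quadratic-form iteration exploiting the gap between $\hbar\widehat b(w,t)\ge\tilde b_1\hbar$ on $\supp(1-\chi_0)$ and the spectral cutoff. The paper organizes things slightly differently---it keeps the plain cutoff $\chi^w=\Op(1-\chi_0)$ and invokes Lemma~\ref{Chap4-Lemme-MicrolocNh1} only when bounding the commutator (whose principal symbol produces a factor of $\tau$), whereas you fold the $\tau$-microlocalization into $\chi^w$ from the start; you are also more explicit than the paper about absorbing the $|\tau|^3$ part of $f_{\mathbf 1}^\star$ into the positive form $\langle M\tau,\tau\rangle$, a point the paper glosses over. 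One cosmetic slip: once you pass to $\Nh^{[n]}$ acting on $\Ld(\R^{s+k}_{y,t})$ the variables $(x,\xi)$ are gone, so your cutoff $\theta(\hbar^{-\delta}(x,\xi,\tau))$ should simply be $\theta(\hbar^{-\delta}\tau)$ at that stage.
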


\begin{proof}
Every eigenfunction of $\Nh$ is given by $\psi_\hbar(x,y,t) = u_\hbar(y,t) h_n(x)$ for some Hermite function $h_n$ with $\vert n \vert \leq N_{\text{max}}$ and some eigenfunction $u_\hbar$ of $\Nh^{[n]}$. Thus, it is enough to prove the lemma for the eigenfunctions of $\Nh^{[n]}$. If $u_\hbar$ is such an eigenfunction, associated with an eigenvalue $\lambda_\hbar \leq b_1 \hbar$, then
\[ u_\hbar = g_\hbar(\Nh^{[n]}) u_\hbar \,. \]
We will prove that $\Vert \chi^w g_\hbar( \Nh^{[n]}) \Vert = \grandO(\hbar^{\infty})\,,$ with $\chi^w = \Op (1-\chi_0)$, which is enough to conclude. If $u \in \Ld(\R^{k+s}_{y,t})$ and $\varphi = g_\hbar(\Nh^{[n]}) u$, then
\begin{equation}\label{Chap4-eq-f50}
\langle \Nh^{[n]} \chi^w \varphi, \chi^w \varphi \rangle = \langle \chi^w \Nh^{[n]} \varphi, \chi^w \varphi \rangle + \langle \big{[} \Nh^{[n]} , \chi^w \big{]} \varphi, \chi^w \varphi \rangle.
\end{equation}
On the first hand we have the bound
\begin{equation}\label{Chap4-eq-f51}
\langle \chi^w \Nh^{[n]} \varphi, \chi^w \varphi \rangle \leq \tilde{b}_1 \hbar \Vert \chi^w \varphi \Vert^2 \,.
\end{equation}
On the other hand, the commutator $\big{[} \Nh^{[n]} , \chi^w \big{]}$ is a pseudodifferential operator of order $\hbar$ with symbol supported on $\mathsf{supp} \chi$. Moreover, its principal symbol is $\lbrace N_\hbar^{[n]}, \chi \rbrace$. From the definition of $N_\hbar^{[n]}$ we deduce
\begin{equation*}
\langle \big{[} \Nh^{[n]} , \chi^w \big{]} \varphi, \chi^w \varphi \rangle \leq \hbar (1 + C \hbar) \langle \underline{\chi}^w \vert \tau \vert^w \varphi, \chi^w \varphi \rangle,
\end{equation*}
where $\underline{\chi}$ has the same general properties as $\chi$, and is equal to $1$ on $\mathsf{supp} \chi$. By Lemma \ref{Chap4-Lemme-MicrolocNh1}, we can had a cutoff where $\vert \tau \vert \lesssim \hbar^{\delta}$ and we get
\begin{equation}\label{Chap4-eq-f52}
\langle \big{[} \Nh^{[n]} , \chi^w \big{]} \varphi, \chi^w \varphi \rangle \leq C \hbar^{1+\delta} \Vert \underline{\chi}^w \varphi \Vert \Vert \chi^w \varphi \Vert.
\end{equation}
Finally for $\varepsilon >0$ small enough we have the lower bound
\[\langle \Nh^{[n]} \chi^w \varphi , \chi^w \varphi \rangle \geq \hbar (\tilde{b}_1 + \varepsilon) \Vert \chi^w \varphi \Vert^2 \,, \]
because $N_\hbar^{[n]}(w,t) \geq \hbar \widehat{b}(w,t)$ and $\chi$ vanishes on a neighborhood of $\lbrace \widehat{b}(w,t) \leq \tilde{b}_1 \rbrace$. Using this lower bound in \eqref{Chap4-eq-f50}, and bounding the right-hand side with \eqref{Chap4-eq-f51} and \eqref{Chap4-eq-f52} we get
\begin{equation}
\hbar (\tilde{b}_1 + \varepsilon) \Vert \chi^w \varphi \Vert^2 \leq \hbar \tilde{b}_1 \Vert \chi^w \varphi \Vert^2 + C \hbar^{1+ \delta} \Vert \underline{\chi}^w \varphi \Vert \Vert \chi^w \varphi \Vert.
\end{equation}
Thus
\[ \varepsilon \Vert \chi^w \varphi \Vert \leq C \hbar^{ \delta} \Vert \underline{\chi}^w \varphi \Vert \,, \]
and we can iterate with $\underline{\chi}$ instead of $\chi$ to conclude.
\end{proof}

\subsection{Proof of Theorem \ref{thm.main.first.bnf}}\label{sec.4.conclusion}

To conclude the proof of Theorem \ref{thm.main.first.bnf}, it remains to show that
\[ \lambda_n(\Lh) = \lambda_n(\Nh) + \grandO(\hbar^{\run/2 - \kappa}) \]
uniformly with respect to $n \in [1, N_\hbar^{\text{max}}]$ with
\[ N_{\hbar}^{\mathsf{max}} = \max \lbrace n \in \N, \quad \lambda_n( \Lh) \leq b_1 \hbar \rbrace \,. \]
Here $\lambda_n(\mathcal{A})$ denotes the $n$-th eigenvalue of the self-adjoint operator $\mathcal{A}$, repeated with multiplicities.

\begin{lemma}\label{Chap4-lemme-ineq1}
One has
$$\lambda_n(\Lh) = \lambda_n(\Nh) + \grandO(\hbar^{\frac{\run}{2}- \varepsilon})$$
uniformly with respect to $n \in [1, N_{\hbar}^{\mathsf{max}}]$.
\end{lemma}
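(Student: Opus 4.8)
The plan is to use the microlocalization results of Section \ref{sec.4.microloc} together with the min-max principle to transfer spectral information between $\Lh$ and $\Nh$, controlling the remainder $\Rh$ using the fact that the relevant eigenfunctions are concentrated where $\vert(x,\xi,\tau)\vert \lesssim \hbar^\delta$. The identity to exploit is $\Uh^* \Lh \Uh = \Nh + \Rh$ from Theorem \ref{thm.first.quantized.normal.form}, where $\Rh = \Op r_\hbar$ with $r_\hbar = \grandO((x,\xi,\tau,\hbar^{1/2})^{\run})$ on $U_\un'$. First I would fix $\delta$ close to $\tfrac12$, say $\delta = \tfrac12 - \varepsilon/\run$, and observe that on the range of $\Op \chi_1(\hbar^{-\delta}(x,\xi,\tau))$ the symbol $r_\hbar$ is $\grandO(\hbar^{\delta \run}) = \grandO(\hbar^{\run/2 - \varepsilon})$; combined with the $\grandO((x,\xi,\tau,\hbar^{1/2})^{\run})$ estimate and the Calderón–Vaillancourt bound, this gives $\Vert \Op\chi_1(\hbar^{-\delta}\cdot)\, \Rh\, \Op\chi_1(\hbar^{-\delta}\cdot)\Vert = \grandO(\hbar^{\run/2 - \varepsilon})$.

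The second step is the comparison itself, done in two directions via min-max. For one inequality: let $\psi_1, \dots, \psi_n$ be the first $n$ eigenfunctions of $\Lh$ (for $n \leq N_\hbar^{\mathsf{max}}$, so eigenvalue $\leq b_1\hbar$); by the space-localization lemma and Lemma \ref{Chap4-Lemme-MicrolocLh} these are, up to $\grandO(\hbar^\infty)$, supported in $\Omega$ and microlocalized near $\Sigma$, hence pulled back through $\Uh$ (which is microlocally unitary on $U_\un' \times U_\un$) to functions $\tilde\psi_i = \Uh^*\psi_i$ that are microlocalized where $\vert(x,\xi,\tau)\vert \lesssim \hbar^\delta$, i.e.\ $\Op\chi_1(\hbar^{-\delta}\cdot)\tilde\psi_i = \tilde\psi_i + \grandO(\hbar^\infty)$. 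On the $n$-dimensional space they span,
\[
\langle \Nh \tilde\psi, \tilde\psi\rangle = \langle (\Lh\Uh\cdot)(\Uh\tilde\psi), \Uh\tilde\psi\rangle - \langle \Rh \tilde\psi, \tilde\psi\rangle \leq \lambda_n(\Lh)\Vert\tilde\psi\Vert^2 + C\hbar^{\run/2-\varepsilon}\Vert\tilde\psi\Vert^2,
\]
using the microlocalization to insert the cutoffs around $\Rh$ at the cost of $\grandO(\hbar^\infty)$. By min-max, $\lambda_n(\Nh) \leq \lambda_n(\Lh) + \grandO(\hbar^{\run/2-\varepsilon})$. The reverse inequality is symmetric: take the first $n$ eigenfunctions of $\Nh$ (eigenvalue $\leq b_1\hbar$; here one uses the finitely-many $\Nh^{[n]}$ with $\vert n\vert \leq N_{\text{max}}$), apply Lemmas \ref{Chap4-Lemme-MicrolocNh1} and \ref{Chap4-Lemme-MicrolocNh2} to get the analogous microlocalization, push forward by $\Uh$, and run min-max for $\Lh$. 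One must check that $\Uh$ maps these trial functions into $\Dom(\Lh)$ and respects the Dirichlet boundary condition, which is fine because of the space localization inside $\Omega \subset M\setminus\partial M$.

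The main obstacle I anticipate is bookkeeping the non-compactness in the $y$-variable and the operator-valued nature of $\Nh$: the cutoff $\Op\chi_1(\hbar^{-\delta}\cdot)$ acts only in $(x,\xi,\tau)$, and one needs the estimate on $\Rh$ to be uniform in the $(w,t)$-parameters, which is where the compact support of $f_\un^\star$ and the space-localization lemmas (both for $\Lh$ and for $\Nh$) are essential — they confine everything to a fixed compact set in $(w,t)$ where the symbol bounds of Theorem \ref{thm.first.quantized.normal.form} hold. A secondary point is ensuring the count $N_\hbar^{\mathsf{max}}$ matches on both sides; this follows because the two-sided comparison at level $n$ forces $\lambda_n(\Lh) \leq b_1\hbar \iff \lambda_n(\Nh) \leq b_1\hbar$ up to the small shift, after slightly enlarging/shrinking $b_1$, and then one gets the bijection $\Lambda_\hbar$ of Theorem \ref{thm.main.first.bnf} by matching repeated eigenvalues in increasing order. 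I would also remark that the same argument, applied with the threshold $b_1\hbar$ replaced by any $b\hbar$ with $b<b_1$, shows the spectrum below $b\hbar$ is captured by the finitely many $\Nh^{[n]}$ with $N_\hbar^{[n]}$ small, which is what Section \ref{sec.4.Nh} already recorded.
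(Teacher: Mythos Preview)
Your proposal is correct and follows essentially the same approach as the paper: use the identity $\Uh^* \Lh \Uh = \Nh + \Rh$ from Theorem \ref{thm.first.quantized.normal.form}, combine the microlocalization Lemmas \ref{Chap4-Lemme-MicrolocLh}, \ref{Chap4-Lemme-MicrolocNh1}, \ref{Chap4-Lemme-MicrolocNh2} to show that on the relevant eigenfunctions $\Rh$ contributes $\grandO(\hbar^{\delta \run}) = \grandO(\hbar^{\run/2 - \varepsilon})$, and run the min-max principle in both directions. The paper presents the two inequalities in the opposite order (starting with $\Nh$-eigenfunctions as quasimodes for $\Lh$) and is terser about the domain and matching-of-counts issues you raise, but the argument is the same.
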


\begin{proof}
Let us focus on the "$\leq$" inequality. For $n \in [1, N_\hbar^{\text{max}}]$, denote by $\psi_n^\hbar$ the normalized eigenfunction of $\Nh$ associated with $\lambda_n(\Nh)$, and
\[ \varphi_n^\hbar = \Uh \psi_n^\hbar \,. \]
We will use $\varphi_n^\hbar$ as quasimode for $\Lh$. Let $N \in [1, N_\hbar^{\text{max}}]$ and
\[ V_N^{\hbar} = \vect \lbrace \varphi_n^{\hbar}; \quad 1 \leq n \leq N \rbrace \,. \]
For $\varphi \in V_N^\hbar$ we use the notation $\psi = \Uh^{-1} \varphi$. By Theorem \ref{thm.first.quantized.normal.form}, we have
\begin{equation}\label{Chap4-lemme-ineq1-eq01}
\langle \Lh \varphi , \varphi \rangle = \langle \Nh \psi, \psi \rangle + \langle \Rh \psi, \psi \rangle \leq \lambda_N(\Nh) \Vert \psi \Vert^2 + \langle \Rh \psi, \psi \rangle.
\end{equation}
According to lemmas \ref{Chap4-Lemme-MicrolocNh1} and \ref{Chap4-Lemme-MicrolocNh2}, $\psi$ is microlocalized where $\vert (x,\xi,\tau) \vert \leq \hbar^{\delta}$ and $(w,t) \in \lbrace \widehat{b}(w,t) \leq \tilde b _1 \rbrace \subset U$. But the symbol of $\Rh$ is such that $R_\hbar = \grandO((x,\xi,\tau,\hbar^{1/2})^{\run})$ for $(w,t) \in U$, so:
\begin{equation}\label{Chap4-lemme-ineq1-eq02}
\langle \Rh \psi, \psi \rangle = \grandO( \hbar^{\delta \run}) = \grandO(\hbar^{\frac{\run}{2}- \varepsilon}),
\end{equation}
for suitable $\delta \in (0,\frac{1}{2})$. By \eqref{Chap4-lemme-ineq1-eq01} and \eqref{Chap4-lemme-ineq1-eq02} we have
\begin{equation*}
\langle \Lh \varphi, \varphi \rangle \leq ( \lambda_N(\Lh) + C \hbar^{\frac{\run}{2}- \varepsilon}) \Vert \varphi \Vert^2, \quad \forall \varphi \in V_N^{\hbar}.
\end{equation*}
Since $V_N^\hbar$ is $N$-dimensional, the minimax principle implies that
\begin{equation}
\lambda_N(\Lh) \leq \lambda_N(\Nh) + C \hbar^{\frac{\run}{2}- \varepsilon}.
\end{equation}
The reversed inequality is proved in the same way: we take the eigenfunctions of $\Lh$ as quasimodes for $\Nh$, and we use the microlocalization lemma \ref{Chap4-Lemme-MicrolocLh}.
\end{proof}

\section{A second normal form in the case $k >0$}\label{Chap4-sec4}

In the last sections, we reduced the spectrum of $\Lh$ to the spectrum of a normal form $\Nh$. Moreover, if $b_1 > b_0$ is sufficiently close to $b_0$ then the spectrum of $\Nh$ in $(-\infty, b_1 \hbar)$ is given by the spectrum of $\Nh^{[1]}$, an $\hbar$-pseudodifferential operator on $\R^{s+k}_{(y,t)}$ with symbol
\begin{equation}
N_{\hbar}^{[1]} = \langle M(y,\eta,t) \tau, \tau \rangle + \hbar \widehat{b}(y,\eta,t) + f^{\star}_{\mathbf{1}}(y,\eta,t, \tau, \hbar).
\end{equation}

In this section, we will construct a Birkhoff normal form again, to reduce the spectrum of $\Nh^{[1]}$ to an effective operator $\Mh$ on $\R^s_y$. In that purpose, in section \ref{Chap4-sec4-1} we will find new canonical variables $(\hat{t}, \hat{\tau})$ in which $N_{\hbar}^{[1]}$ is the perturbation of an harmonic oscillator. In sections \ref{Chap4-sectionSecondFormalBNF} and \ref{Chap4-sec4-3} we will construct the semiclassical Birkhoff normal form $\Mh$. In section \ref{Chap4-sec4-4} we will prove that the spectrum of $\Nh^{[1]}$ is given by the spectrum of $\Mh$.\\

In the following, we assume that $t \mapsto \widehat{b}(w,t)$ admits a non-degenerate minimum at $s(w)$ for $w$ in a neighborhood of $0$, and we denote by $(\nu_1^2(w), \cdots, \nu_k^2(w))$ the eigenvalues of the positive symmetric matrix:
$$M(w,s(w))^{1/2} \cdot \frac{1}{2} \partial_t^2 \hat{b}(w,s(w)) \cdot M(w,s(w))^{1/2}.$$
$\nu_1, \cdots, \nu_k$ are smooth non-vanishing functions in a neighborhood of $w=0$.

\subsection{Symplectic reduction of $N_{\hbar}^{[1]}$}\label{Chap4-sec4-1}

In this section we prove the following Lemma.

\begin{lemma}\label{Chap4-Lemma-Red-Nh0}
There exists a canonical (symplectic) transformation $\Phi_{\mathbf{2}} : U_{\mathbf{2}} \rightarrow V_{\mathbf{2}}$  between neighborhoods $U_{\mathbf{2}}$, $V_{\mathbf{2}}$ of $0 \in \R^{2s+2k}_{(y,\eta,t,\tau)}$ such that 
\begin{align*}
\hat{N}_{\hbar} := N_{\hbar}^{[1]} \circ \Phi_{\mathbf{2}} &= \hbar \widehat{b}(w,s(w)) + \sum_{j=1}^k \nu_j(w) \left( \tau_j^2 + \hbar t_j^2 \right) + \grandO(\vert t \vert^3 \vert \tau \vert^2) \\ & \quad + \grandO(\vert t \vert^3 \hbar) + \grandO(\hbar^2) + \grandO(\hbar \vert \tau \vert ) + \grandO(\vert \tau \vert^3) + \grandO( \vert t \vert \vert \tau \vert^2).
\end{align*}
\end{lemma}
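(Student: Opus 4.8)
The plan is to construct $\Phi_{\mathbf 2}$ as the composition of two $w$-dependent canonical transformations acting essentially in the $(t,\tau)$-variables, each lifted to a genuine symplectomorphism of $\R^{2s+2k}$: first a \textbf{symplectic translation} bringing the minimum of $t\mapsto\widehat b(w,t)$ to the origin, then a \textbf{linear symplectic normalization} of the resulting quadratic part. Once $\Phi_{\mathbf 2}$ is built, $\hat N_\hbar=N_\hbar^{[1]}\circ\Phi_{\mathbf 2}$ is obtained by Taylor expanding the three pieces $\langle M(w,t)\tau,\tau\rangle$, $\hbar\widehat b(w,t)$ and $f^\star_{\mathbf 1}$ of $N_\hbar^{[1]}$ and sorting the resulting terms into the advertised classes; all statements are up to shrinking the neighbourhoods.

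\textbf{Step 1 (translation).} The implicit function theorem applied to $\partial_t\widehat b$, together with non-degeneracy, gives a smooth $w\mapsto s(w)$ near $w=0$. I would realize the translation of $t$ by $s(w)$ as the time-one Hamiltonian flow of $(w,t,\tau)\mapsto-\langle s(w),\tau\rangle$ (or through a generating function): this is automatically symplectic, leaves $\tau$ unchanged, translates $t$ by $s(w)$ up to $\grandO(|\tau|)$, and perturbs $(y,\eta)$ by $\grandO(|\tau|)$; on $\{\tau=0\}$ it is the exact translation $t\mapsto t+s(y,\eta)$. After this step $\hbar\widehat b$ reads $\hbar\widehat b(w,s(w))+\tfrac\hbar2\langle\partial_t^2\widehat b(w,s(w))t,t\rangle+\grandO(\hbar|t|^3)+\grandO(\hbar|\tau|)$, while $\langle M(w,t)\tau,\tau\rangle=\langle M(w,s(w))\tau,\tau\rangle+\grandO(|t||\tau|^2)+\grandO(|\tau|^3)$, and $f^\star_{\mathbf 1}$ keeps its form $\grandO(\hbar^2)+\grandO(\hbar|\tau|)+\grandO(|\tau|^3)$ (using $I=\hbar\mathbf 1$ in its bound).

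\textbf{Step 2 (linear normalization).} Write $M_0(w)=M(w,s(w))>0$ and $B_0(w)=\tfrac12\partial_t^2\widehat b(w,s(w))>0$. The quadratic form $\langle M_0(w)\tau,\tau\rangle+\hbar\langle B_0(w)t,t\rangle$ is brought to $\sum_j\nu_j(w)(\tau_j^2+\hbar t_j^2)$ by the ($\hbar$-independent) composition of three elementary linear symplectomorphisms of $\R^{2k}_{(t,\tau)}$: $(t,\tau)\mapsto(M_0^{1/2}t,M_0^{-1/2}\tau)$; then $(t,\tau)\mapsto(P(w)t,P(w)\tau)$ with $P(w)$ orthogonal diagonalizing $M_0^{1/2}B_0M_0^{1/2}=P\,\mathrm{diag}(\nu_j^2)\,P^{T}$; then the diagonal rescaling $(t_j,\tau_j)\mapsto(\nu_j^{-1/2}t_j,\nu_j^{1/2}\tau_j)$. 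Here Assumption \ref{assump.nu.j.simples} ensures that the eigenvalues $\nu_j^2(w)$ stay simple, hence $\nu_j(w)$ and $P(w)$ are smooth near $w=0$. The composite $L(w)$ is block-diagonal, $L(w)(t,\tau)=(A(w)t,(A(w)^{T})^{-1}\tau)$; I would lift it to a symplectomorphism of $\R^{2s+2k}$ equal to the identity on $\{t=\tau=0\}$ by Moser's trick (or a generating function), which modifies $(y,\eta)$ only by $\grandO(|t||\tau|)$ and $(t,\tau)$ by $(A(w)t,(A(w)^{T})^{-1}\tau)+\grandO(|t||\tau|)$. Since by construction $\tfrac12 A^{T}\partial_t^2\widehat b\,A=\mathrm{diag}(\nu_j)$ and $(A^{T})^{-1}$ diagonalizes $M_0$ into $\mathrm{diag}(\nu_j)$ as well, the main parts transform exactly into $\hbar\sum_j\nu_j(w)t_j^2$ and $\sum_j\nu_j(w)\tau_j^2$; the extra $\grandO(|t||\tau|)$ in the change of variables, inserted into $\hbar\widehat b$ and $\langle M\tau,\tau\rangle$, produces only terms in $\grandO(\hbar|\tau|)$, $\grandO(|t||\tau|^2)$ and $\grandO(|\tau|^3)$, and the cubic-in-$t$ remainder in the expansion of $M(w,\cdot)$ around $s(w)$, multiplied by $|\tau|^2$, gives the $\grandO(|t|^3|\tau|^2)$ term.

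\textbf{Conclusion and main difficulty.} Taking $\Phi_{\mathbf 2}$ to be the composition of the two lifts and collecting the above estimates yields the claimed expansion of $\hat N_\hbar$. The routine part is the Taylor bookkeeping; the genuine content is the $w$-dependence: because the translation amount $s(w)$ and the normalizing matrices $M_0(w)^{1/2}$, $P(w)$, $\mathrm{diag}(\nu_j^{\pm1/2}(w))$ all depend on the base variables $w=(y,\eta)$, the transformation cannot be performed fibre-wise in $(t,\tau)$ and must act on $(y,\eta)$ too. The hard part will be to check that such lifts exist as honest symplectomorphisms (this is where Moser's argument, or a well-chosen generating function, is needed) and, crucially, that every $w$-derivative falling on the coefficients $\widehat b(w,s(w))$ and $\nu_j(w)$ generates only the listed remainders — precisely because each such correction carries at least one factor of $\tau$ off the slice $\{\tau=0\}$, on which $\Phi_{\mathbf 2}$ is the exact affine normalization of $\widehat b(w,\cdot)$. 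This follows the scheme of the second reduction in \cite{Birkhoff3D} (there $s=k=1$), the new feature being the matrix-valued normalization required when $k>1$.
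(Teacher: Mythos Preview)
Your proposal is correct and follows essentially the same route as the paper: a $w$-dependent translation $t\mapsto t-s(w)$ followed by a $w$-dependent linear symplectic normalization of the quadratic part in $(t,\tau)$, each lifted to a true symplectomorphism of $\R^{2s+2k}$ via a Moser/Weinstein-type argument, with the same Taylor bookkeeping for the remainders. The only cosmetic differences are that the paper writes the approximate changes of variables explicitly (including the first-order $(y,\eta)$-corrections) and invokes the Darboux--Weinstein lemma to make them exactly symplectic, and factors the linear step as a single coreduction matrix $P$ (with $P^{T}M^{-1}P=I$, $P^{T}\tfrac12\partial_t^2\widehat b\,P=\mathrm{diag}(\nu_j^2)$) followed by the diagonal rescaling, rather than your three-fold factorisation $M_0^{1/2}\cdot P_{\mathrm{orth}}\cdot\mathrm{diag}(\nu_j^{\pm1/2})$.
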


\begin{proof}
We want to expan $\Nh^{[1]}$ near its minimum with respect to the variables $v=(t,\tau)$. First, from the Taylor expansion of $f^{\star}_{\mathbf{1}}$ we deduce:
\begin{align*}
N_{\hbar}^{[1]} &= \langle M(w,t) \tau, \tau \rangle + \hbar \widehat{b}(w,t) + \grandO(\hbar^2) + \grandO( \tau \hbar) + \grandO(\tau^3).
\end{align*}
We will Taylor-expan $t \mapsto \widehat{b}(w,t)$ on a neighborhood of its minimum point $s(w)$. In that purpose, we define new variables $(\tilde{y}, \tilde{\eta},\tilde{t},\tilde{\tau}) = \tilde{\varphi}(y,\eta,t,\tau)$ by:
$$\begin{cases}
\tilde{y} &= y - \sum_{j=1}^k \tau_j \nabla_{\eta} s_j(y,\eta) ,\\
\tilde{\eta} &= \eta +  \sum_{j=1}^k \tau_j \nabla_{y} s_j(y,\eta),\\
\tilde{t} &= t - s(y,\eta),\\
\tilde{\tau} &= \tau.
\end{cases}$$
Then $\tilde{\varphi}^* \omega_0 = \omega_0 + \grandO( \tau ).$
Using the Darboux-Weinstein Theorem \ref{thm.Weinstein}, we can make $\tilde{\varphi}$ symplectic on a neighborhood of $0$, up to a change of order $\grandO( \tau ^2)$. In these new variables, the symbol $\tilde{N}_{\hbar} := N_{\hbar}^{[1]} \circ \tilde{\varphi}^{-1}$ is:
\begin{align*}
\tilde{N}_{\hbar} &= \langle M \left[ \tilde{w} + \grandO( \tilde{\tau} ),\tilde{t}+ s(\tilde{w} + \grandO(\tilde{\tau})) \right] \tilde{\tau}, \tilde{\tau} \rangle + \hbar \widehat{b} \left[ \tilde{y} + \grandO(\tilde{\tau}), \tilde{\eta} + \grandO (\tilde{\tau}), s(\tilde{y}, \tilde{\eta})+ \tilde{t} + \grandO(\tilde{\tau})\right] \\ & \quad + \grandO(\hbar^2) + \grandO(\hbar \tilde{\tau})+ \grandO(\tilde{\tau}^3)\\
&= \langle M(\tilde{w},\tilde{t}+s(\tilde{w})) \tilde{\tau}, \tilde{\tau} \rangle + \hbar \widehat{b} \left[ \tilde{y},\tilde{\eta},s(\tilde{y},\tilde{\eta}) + \tilde{t}\right] + \grandO(\hbar^2) + \grandO(\hbar \tilde{\tau}) + \grandO(  \tilde{\tau} ^3).
\end{align*}
Then we remove the tildes and we expand this symbol in powers of $t$, $\tau$, $\hbar$. We get
\begin{align*}
\tilde{N}_{\hbar} &= \langle M(w,s(w)) \tau,\tau \rangle + \hbar \widehat{b} (w,s(w)) + \frac{\hbar}{2} \langle \partial_t^2 \widehat{b}(w,s(w)) t, t \rangle \\ & \quad + \grandO( \vert t \vert ^3 \hbar) + \grandO(\hbar^2) + \grandO(\hbar \vert \tau \vert) + \grandO( \vert \tau \vert^3) + \grandO(\vert t \vert \vert \tau \vert^2).
\end{align*}
Now, we want to coreduce the positive quadratic forms $M (w,s(w))$ and $\frac{1}{2}\partial^2_t \widehat{b} \left[w,s(w)\right]$. The reduction of quadratic forms in orthonormal coordinates implies that there exists a matrix $P(w)$ such that:
$$^t P \ M^{-1} P =  I, \quad \text{and } ^t P \ \frac{1}{2} \partial^2_t \widehat{b}\  P = \mathrm{diag}(\nu_1^2, ..., \nu_k^2).$$
We define the new coordinates $(\check{y}, \check{\eta}, \check{t}, \check{\tau}) = \check{\varphi}(y,\eta,t,\tau)$ by:
$$\begin{cases}
\check{t} &=   P(w)^{-1} t\\
\check{\tau} &= \ ^t P(w) \tau\\
\check{y} &= y + \ ^t [ \nabla_{\eta} (P^{-1} t) ] . ^t P  \tau \\
\check{\eta} &= \eta - \ ^t [ \nabla_y (P^{-1} t) ] . ^t P \tau,
\end{cases}$$
so that $\check{\varphi}^* \omega_0 - \omega_0 = \grandO ( \vert t \vert^2 + \vert \tau \vert )$. Again, we can make it symplectic up to a change of order $\grandO(\vert t \vert^3 + \vert \tau \vert^2)$ by Weinstein Lemma \ref{thm.Weinstein}. In these new variables, the symbol becomes (after removing the "checks"):
\begin{align*}
\check{N}_{\hbar} &= \hbar \widehat{b}(w,s(w)) + \sum_{j=1}^k \left( \tau_j^2 + \hbar \nu_j(w)^2 t_j^2 \right)  + \grandO(\vert t \vert^3 \vert \tau \vert^2) \\ & \quad + \grandO(\vert t \vert^3 \hbar) + \grandO(\hbar^2) + \grandO(\hbar \vert \tau \vert ) + \grandO(\vert \tau \vert^3) + \grandO( \vert t \vert \vert \tau \vert^2).
\end{align*}

The last change of coordinates $(\hat{y},\hat{\eta},\hat{t},\hat{\tau}) = \hat{\varphi}(y,\eta,t,\tau)$ defined by:
$$\begin{cases}
\hat{t}_j &=   \nu_j(w)^{1/2} t_j\\
\hat{\tau}_j &= \nu_j(w)^{-1/2} \tau_j\\
\hat{y}_j &= y_j +  \sum_{i=1}^k \nu_i^{-1/2} \tau_i \partial_{\eta_j} \nu_i ^{1/2} t_i \\
\hat{\eta} &= \eta - \sum_{i=1}^k  \nu_i^{-1/2} \tau_i \partial_{y_j} \nu_i^{1/2} t_i,
\end{cases}$$
is such that $\hat{\varphi}^* \omega_0 = \omega_0 + \grandO( \tau ),$ so it can be corrected modulo $\grandO( \vert \tau \vert^2)$ to be symplectic, and we get the new symbol:
\begin{align*}
\hat{N}_{\hbar} &=  \hbar \widehat{b} (w,s(w))  + \sum_{j=1}^k \nu_j(w) \left( \tau_j^2 + \hbar t_j^2 \right) + \grandO(\vert t \vert^3 \vert \tau \vert^2) \\ & \quad + \grandO(\vert t \vert^3 \hbar) + \grandO(\hbar^2) + \grandO(\hbar \vert \tau \vert ) + \grandO(\vert \tau \vert^3) + \grandO( \vert t \vert \vert \tau \vert^2).
\end{align*}
and Lemma \ref{Chap4-Lemma-Red-Nh0} is proved.
\end{proof}

\subsection{Formal second normal form} \label{Chap4-sectionSecondFormalBNF}

The harmonic oscillators occuring in $\hat{N}_{\hbar}$ are $$\Jh^{(j)} = \Oph(\hbar^{-1}\tau_j^2 + t_j^2), \quad 1 \leq j \leq k.$$ If we denote $$h= \sqrt{\hbar},$$ the symbol of $\Jh^{(j)}$ for the $h$-quantization is $\tilde{\tau}^2_j + t_j^2$. This is why we use the following mixed quantization:
$$\Opt(\mathsf{a})u(y_0,t_0) = \frac{1}{(2\pi \hbar)^{n-k}(2\pi \sqrt{\hbar})^k} \int e^{\frac{i}{\hbar}\langle y_0-y, \eta \rangle}e^{\frac{i}{\sqrt{\hbar}}\langle t_0-t , \tilde{\tau} \rangle} \mathsf{a}(\sqrt{\hbar},y,\eta,t,\tilde{\tau}) \dd y \dd \eta \dd t \dd \tilde{\tau}.$$
It is related to the $\hbar$-quantization by the relation $$\tau = h\tilde{\tau}, \quad h = \sqrt{\hbar}.$$ In other words, if $a$ is a symbol in some standard class $S(m)$, and if we denote:
$$\mathsf{a}(h,y,\eta,t,\tilde{\tau}) = a(h^2,y,\eta,t,h \tilde{\tau}),$$
then we have:
$$\Opt(\mathsf{a}) = \Oph(a).$$
However, if we take $\mathsf{a} \in S(m)$, then $\Opt(\mathsf{a})$ is not necessarily a $\hbar$-pseudodifferential operator, since the associated $a$ may not be bounded with respect to $\hbar$, and though it does not belong to any standard class. For instance, we have:
$$\partial_{\tau} a = \frac{1}{\sqrt{\hbar}} \partial_{\tilde{\tau}} \mathsf{a}.$$
But still $\Opt(\mathsf{a})$ is a $h$-pseudodifferential operator, with symbol:
$$\mathfrak{a}(h,y,\tilde{\eta},t,\tilde{\tau})=\mathsf{a}(h,y,h\tilde{\eta},t,\tilde{\tau}).$$
With this notation:
$$\Opt ( \mathsf{a} ) = \Oph (\mathfrak{a}).$$
Thus, in this sense, we can use the properties of $\hbar$-pseudodifferential and $h$-pseudodifferential operators to deal with our mixed quantization.\\

In our case, we have:
$$\Opt(\mathsf{N}_h) = \Oph(\widehat{N}_{\hbar}),$$
with
$$\mathsf{N}_h = h^2 \widehat{b}(w,s(w)) + h^2 \sum_{j=1}^k \nu_j(w) ( \tilde{\tau}_j^2 + t_j^2 )+ \grandO(h^2 \vert t \vert ^3) + \grandO(h^4) + \grandO(h^3 \vert \tilde{\tau} \vert) + \grandO (h^2 \vert t \vert \vert \tilde{\tau} \vert^2 ).$$

Let us construct a semiclassical Birkhoff normal form with respect to this quantization. We will work in the space of formal series 
$$\mathcal{E}_{\mathbf{2}}:= \mathcal{C}^{\infty}(\R^d_w)[[t,\tilde{\tau},h]]$$
endowed with the star product $\star$ adapted to our mixed quantization.  In other words
$$\Opt (\mathsf{a} \star \mathsf{b}) = \Opt(\mathsf{a}) \Opt (\mathsf{b}).$$
The change of variable $\tau = h\tilde{\tau}$ between the classical quantization and our mixed quantization yields the following formula for the star product:
\begin{align}\label{StarProductMixedQuantization}
 \mathsf{a} \star \mathsf{b} = \sum_{k \geq 0} \frac{1}{k!} \left( \frac{h}{2i}\right)^k  A_h(\partial) ^k (\mathsf{a}(h,y_1,\eta_1,t_1,\tilde{\tau}_1)\mathsf{b}(h,y_2,\eta_2,t_2,\tilde{\tau}_2))_{\vert (t_1,\tau_1,y_1,\eta_1) = (t_2,\tau_2,y_2,\eta_2)},
\end{align}
with
$$A_h(\partial) = \sum_{j=1}^k \frac{\partial}{\partial t_{1j}}\frac{\partial}{\partial \tilde{\tau}_{2j}} - \frac{\partial}{\partial t_{2j}} \frac{\partial}{\partial \tilde{\tau}_{1j}} +  h \sum_{j=1}^s \frac{\partial}{\partial y_{1j}} \frac{\partial}{\partial \eta_{2j}} - \frac{\partial}{\partial y_{2j}}\frac{\partial}{\partial \eta_{1j}}.$$
The degree function on $\mathcal{E}_{\mathbf{2}}$ is defined by:
$$\deg(t^{\alpha_1}\tilde{\tau}^{\alpha_2} h^{\ell}) = \vert \alpha_1 \vert + \vert \alpha_2 \vert + 2 \ell.$$
The degree of a general series depends on $w$. We denote by $\mathcal{D}_N$ the subspace spanned by monomials of degree $N$, and $\mathcal{O}_N$ the subspace of formal series with valuation at least $N$ on a neighborhood of $w=0$. For $\tau_1, \tau_2 \in \mathcal{E}_{\mathbf{2}}$, we define 
$$\mathsf{ad}_{\tau_1}(\tau_2) = [\tau_1,\tau_2] = \tau_1 \star \tau_2 - \tau_2 \star \tau_1.$$
Then, if $\tau_1 \in \grandO_{N_1}$ and $\tau_2 \in \grandO_{N_2}$,
$$\frac{i}{h}\ad_{\tau_1}(\tau_2) \in \grandO_{N_1+N_2-2}.$$
We denote
$$N_0 = \widehat{b}(w,s(w)) \in \mathcal{D}_{0} \quad \text{and} \quad N_2 = \sum_{j=1}^k \nu_j(w) \vert \tilde{v}_j \vert^2 \in \mathcal{D}_2,$$
with the notation $\tilde{v}_j = t_j+ i \tilde{\tau}_j$, so that
$$\frac{1}{h^2} \mathsf{N}_h = N_0 + N_2 + \grandO_3.$$
Then we can construct the following normal form. Recall that $r_{\mathbf{2}}$ is an integer chosen such that
$$\forall \alpha \in \Z^k, \quad 0 < \vert \alpha \vert < r_{\mathbf{2}}, \quad \sum_{j=1}^s \alpha_j \nu_j(0) \neq 0.$$
Moreover, this non-resonance relation at $w=0$ can be extended to a small neighborhood of $0$.

\begin{lemma}\label{Chap4-formalBNF2}
For any $\gamma \in \grandO_3$, there exist $\kappa, \tau$ $\in \grandO_3$ and $\rho \in \grandO_{r_{\mathbf{2}}}$ such that
\begin{align}\label{eqformelleBNF2}
e^{\frac{i}{h}\ad_{\tau}} \left( N_0 + N_2 + \gamma \right) = N_0 + N_2 + \kappa + \rho,
\end{align}
and $[\kappa,\vert \tilde{v}_j \vert^2]=0$ for $1 \leq j \leq k$.
\end{lemma}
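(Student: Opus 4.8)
The plan is to mimic, step by step, the proof of Theorem~\ref{thm.formal.bnf}, now carried out in $\mathcal{E}_{\mathbf{2}} = \mathcal{C}^\infty(\R^d_w)[[t,\tilde{\tau},h]]$ equipped with the mixed star product \eqref{StarProductMixedQuantization} and the degree $\deg(t^{\alpha_1}\tilde{\tau}^{\alpha_2}h^\ell) = \vert \alpha_1 \vert + \vert \alpha_2 \vert + 2\ell$. I would argue by induction on $N \ge 3$, with the trivial base case $\tau_2 = 0$: assume that $\tau_{N-1} \in \grandO_3$ and homogeneous $K_i \in \mathcal{D}_i$ for $3 \le i \le N-1$, with $[K_i,\vert \tilde{v}_j \vert^2] = 0$ for every $j$, have already been constructed so that
\[ e^{\frac{i}{h}\ad_{\tau_{N-1}}}(N_0 + N_2 + \gamma) = N_0 + N_2 + K_3 + \cdots + K_{N-1} + \grandO_N . \]
Writing the remainder as $R_N + \grandO_{N+1}$ with $R_N \in \mathcal{D}_N$, I look for a corrector $\tau' \in \grandO_N$ and set $\tau_N = \tau_{N-1} + \tau'$.

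The heart of the inductive step is the degree bookkeeping of $\frac{i}{h}\ad_{\tau'}(N_0 + N_2) = -\frac{i}{h}\ad_{N_0 + N_2}(\tau')$. The structural observation — precisely the one used for the $\langle M\tau,\tau\rangle$ term in the proof of Theorem~\ref{thm.formal.bnf} — is that $N_0 = \widehat{b}(w,s(w))$ and the frequencies $\nu_j(w)$ depend on $w = (y,\eta)$ only, whereas in \eqref{StarProductMixedQuantization} every derivative in $(y,\eta)$ carries an explicit factor $h$. Hence $\frac{i}{h}[N_0,\tau']$, whose leading term is $h\{N_0,\tau'\}_w$, lies in $\grandO_{N+1}$; decomposing
\[ \frac{i}{h}[N_2,\tau'] = \sum_{j=1}^k \nu_j(w)\,\frac{i}{h}\ad_{\vert \tilde{v}_j \vert^2}(\tau') + \sum_{j=1}^k \vert \tilde{v}_j \vert^2\,\frac{i}{h}[\nu_j(w),\tau'], \]
the second sum is, for the same reason, again in $\grandO_{N+1}$; and since $\vert \tilde{v}_j \vert^2 = t_j^2 + \tilde{\tau}_j^2$ is quadratic and $w$-independent, there are no higher $\star$-corrections and $\frac{i}{h}\ad_{\vert \tilde{v}_j \vert^2}(\tau') = \{\vert \tilde{v}_j \vert^2,\tau'\}_{(t,\tilde{\tau})}$ exactly, the analogue of Lemma~\ref{lem.preliminary}(1). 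As $\frac{i}{h}\ad_{\tau'}$ raises valuation by $N-2$, the commutators with $K_3,\dots,K_{N-1}$ and $R_N$, the higher-order terms in the exponential, and $\frac{i}{h}[N_0,\tau']$ all fall into $\grandO_{N+1}$, so that
\[ e^{\frac{i}{h}\ad_{\tau_N}}(N_0 + N_2 + \gamma) = N_0 + N_2 + K_3 + \cdots + K_{N-1} + R_N - \sum_{j=1}^k \nu_j(w)\,\frac{i}{h}\ad_{\vert \tilde{v}_j \vert^2}(\tau') + \grandO_{N+1} . \]
It thus remains only to solve the homological equation $R_N = K_N + \sum_{j} \nu_j(w)\,\frac{i}{h}\ad_{\vert \tilde{v}_j \vert^2}(\tau')$ with $K_N,\tau' \in \mathcal{D}_N$ and $[K_N,\vert \tilde{v}_j \vert^2] = 0$ for $1 \le j \le k$.

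To solve it, I would establish the analogue of Lemma~\ref{lem.preliminary}(2)--(3): on a monomial $c(w)\,\tilde{v}^\alpha\bar{\tilde{v}}^{\alpha'}h^\ell \in \mathcal{D}_N$ the operator $\sum_j \nu_j(w)\,\frac{i}{h}\ad_{\vert \tilde{v}_j \vert^2}$ acts by multiplication by $\langle \alpha' - \alpha,\nu(w)\rangle = \sum_j (\alpha'_j - \alpha_j)\nu_j(w)$, up to a nonzero universal constant. The monomials with $\alpha = \alpha'$ — exactly those commuting with every $\vert \tilde{v}_j \vert^2$ — are collected into $K_N$, while for the off-diagonal ones one has $0 < \vert \alpha - \alpha' \vert \le \vert \alpha \vert + \vert \alpha' \vert \le N$; since the induction is run only for $N < \rdeux$, the non-resonance hypothesis at $w=0$, extended by continuity to a neighborhood of $0$, forces $\langle \alpha' - \alpha,\nu(w)\rangle \ne 0$ there, so one divides to obtain $\tau'$. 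Iterating up to $N = \rdeux - 1$, and setting $\kappa = K_3 + \cdots + K_{\rdeux-1} \in \grandO_3$, $\tau = \tau_{\rdeux-1} \in \grandO_3$, and $\rho \in \grandO_{\rdeux}$ the final remainder, proves \eqref{eqformelleBNF2}. The step I expect to require the most care — though it is pure bookkeeping once \eqref{StarProductMixedQuantization} has been unpacked — is exactly this degree accounting of the $N_0$- and $\nu_j(w)$-dependent pieces: one must check that each $w$-derivative is genuinely compensated by a power of $h$, so that those corrections land in $\grandO_{N+1}$ and $\sum_j \nu_j(w)\,\frac{i}{h}\ad_{\vert \tilde{v}_j \vert^2}$ remains the only obstruction, exactly as the $\langle M\tau,\tau\rangle$ term was handled for the first normal form.
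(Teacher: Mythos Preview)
Your proposal is correct and follows essentially the same route as the paper's own proof: induction on $N$, the observation that $N_0$ and the $\nu_j(w)$ contribute only through the $h\,\partial_y\partial_\eta$-part of $A_h(\partial)$ in \eqref{StarProductMixedQuantization} and hence fall into $\grandO_{N+1}$ (the paper even gets $\grandO_{N+2}$), and the resolution of the homological equation $R_N = K_N + \sum_j \nu_j(w)\,\frac{i}{h}\ad_{|\tilde v_j|^2}(\tau')$ by diagonalizing $\sum_j \nu_j(w)\,\frac{i}{h}\ad_{|\tilde v_j|^2}$ on the monomials $\tilde v^{\alpha}\bar{\tilde v}^{\alpha'}h^\ell$ and invoking the non-resonance condition for $N<\rdeux$.
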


\begin{proof}
We prove this result by induction. Assume that we have, for some $N>0$, a $\tau \in \grandO_3$ such that
$$e^{\frac{i}{h}\ad_{\tau}}(N_0+N_2+\gamma) = N_0 + N_2 + K_3 + ... + K_{N-1} + R_N + \grandO_{N+1},$$
with $R_N \in \mathcal{D}_N$ and $K_i \in \mathcal{D}_i$ such that $[K_i,\vert \tilde{v}_j \vert^2]=0$. We are looking for a $\tau_N \in \mathcal{D}_N$. For such a $\tau_N$, $\frac{i}{h}\ad_{\tau_N} : \grandO_j \rightarrow \grandO_{N+j-2}$ so:
$$e^{\frac{i}{h}\ad_{\tau+\tau_N}}(N_0+N_2+\gamma) = N_0 + N_2 + K_3 + ... + K_{N-1} + R_N + \frac{i}{h}\ad_{\tau_N} (N_0+N_2) + \grandO_{N+1}.$$
Moreover $N_0$ does not depend on $(t,\tau)$ so the expansion (\ref{StarProductMixedQuantization}) yields
$$\frac{i}{h}\ad_{\tau_N}(N_0) = h \sum_{j=1}^s \left( \frac{\partial \tau_N}{\partial y_j}\frac{\partial N_0}{\partial \eta_j} -\frac{\partial \tau_N}{\partial \eta_j}\frac{\partial N_0}{\partial y_j} \right) + \grandO_{N+6} = \grandO_{N+2},$$
and thus:
$$e^{\frac{i}{h}\ad_{\tau+\tau_N}}(N_0+N_2+\gamma) = N_0 + N_2 + K_3 + ... + K_{N-1} + R_N + \frac{i}{h}\ad_{\tau_N} (N_2) + \grandO_{N+1}.$$
So we are looking for $\tau_N,K_N \in \mathcal{D}_N$ solving the equation
\begin{align}\label{eq1442}
R_N = K_N + \frac{i}{h}\ad_{N_2} \tau_N + \grandO_{N+1}.
\end{align}
To solve this equation, we study the operator $\frac{i}{h}\ad_{N_2} : \grandO_N \rightarrow \grandO_N.$
$$ \frac{i}{h}\ad_{N_2}(\tau_N) = \sum_{j=1}^k \left( \nu_j(w) \frac{i}{h}\ad_{\vert \tilde{v}_j \vert^2} (\tau_N) + \frac{i}{h}  \ad_{\nu_j}(\tau_N) \vert \tilde{v}_j \vert^2 \right) ,$$
and since $\nu$ only depends on $w$, expansion (\ref{StarProductMixedQuantization}) yields:
$$\frac{i}{h} \ad_{\nu_i} (\tau_N) = \sum_{j=1}^s h \left( \frac{\partial \nu_i}{\partial y_j} \frac{\partial \tau_N}{\partial \eta_j} - \frac{\partial \nu_i}{\partial \eta _j} \frac{\partial \tau_N}{\partial y_j} \right) + \grandO_{N+6}= \grandO_{N+2}.$$
Hence, 
$$\frac{i}{h} \ad_{N_2}(\tau_N) = \sum_{j=1}^k \nu_j(w) \frac{i}{h} \ad_{\vert \tilde{v}_j \vert^2}( \tau_N) + \grandO_{N+2},$$
and equation (\ref{eq1442}) becomes:
\begin{align}\label{eq1442bis}
R_N = K_N + \sum_{j=1}^k \nu_j(w) \frac{i}{h}\ad_{\vert \tilde{v}_j \vert^2}(\tau_N) + \grandO_{N+1}.
\end{align}
Moreover, $\frac{i}{h}\ad_{\vert \tilde{v}_j \vert^2}$ acts as:
$$\sum_{j=1}^k \nu_j(w) \frac{i}{h}\ad_{\vert \tilde{v}_j \vert^2}(v^{\alpha_1} \bar{v}^{\alpha_2} h^{\ell}) =  \langle \nu(w) , \alpha_2- \alpha_1 \rangle v^{\alpha_1} \bar{v}^{\alpha_2} h^{\ell}.$$

The definition of $r_{\mathbf{2}}$ ensures that $\langle \nu(w) , \alpha_2 - \alpha_1 \rangle$ does not vanish on a neighborhood of $w=0$ if $N = \vert \alpha_1 \vert + \vert \alpha_2 \vert + 2 \ell < r_{\mathbf{2}}$ and $\alpha_1 \neq \alpha_2$. Hence we can decompose every $R_N$ as in (\ref{eq1442bis}), where $K_N$ contains the terms with $\alpha_1 = \alpha_2$. These terms are exactly the ones commuting with $\vert \tilde{v}_j \vert^2$ for $1 \leq j \leq k$.
\end{proof}

\subsection{Quantized second normal form}\label{Chap4-sec4-3}

Now we can quantize Lemma \ref{Chap4-Lemma-Red-Nh0} and Lemma \ref{Chap4-formalBNF2} to prove the following Theorem.

\begin{theorem}\label{Chap4-Thm-2ndFormeNormale}
There are :\\
\begin{enumerate}
\item[(1)] A unitary operator $\UhII : \Ld( \R^{s+k}_{(y,t)}) \rightarrow \Ld(\R^{s+k}_{(y,t)})$ quantifying a symplectomorphism $\tilde{\Phi}_{\mathbf{2}} = \Phi_{\mathbf{2}} + \grandO((t,\tau)^2)$ microlocally near $0$,
\item[(2)] A function $f_{\mathbf{2}}^{\star} : \R^{2s}_{w} \times \R^{k}_J\times [0,1) \rightarrow \R$ which is $\Cinf$ with compact support, such that
$$\vert f_{\mathbf{2}}^{\star}(w,J_1,\cdots, J_k,\sqrt{\hbar}) \vert \leq C ( \vert J \vert + \sqrt{\hbar})^2,$$
\item[(3)] A $\sqrt{\hbar}$-pseudodifferential operator $\RhII$ with symbol $\grandO((t,\hbar^{-1/2}\tau,\hbar^{1/4})^{r_{\mathbf{2}}})$ on a neighborhood of $0$,
\end{enumerate}
such that
$$\UhII^* \Nh^{[1]} \UhII = \hbar \Mh + \hbar \RhII,$$
where $\Mh$ is the following $\hbar$-pseudodifferential operator :$$\Mh = \Oph \widehat{b}(w,s(w)) + \sum_{j=1}^k \Jh^{(j)} \Oph \nu_j + \Oph f^{\star}_{\mathbf{2}}(w,\Jh^{(1)}, \cdots, \Jh^{(k)}, \sqrt{\hbar}).$$
\end{theorem}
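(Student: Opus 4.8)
The plan is to follow the proof of Theorem \ref{thm.first.quantized.normal.form} almost verbatim, the only genuinely new feature being that the effective small parameter is now $h=\sqrt\hbar$, so every pseudodifferential manipulation must be carried out in the mixed quantization $\Opt$ introduced above. Recall that $\Opt(\mathsf a)=\Oph(\mathfrak a)$, so the standard $h$-pseudodifferential and Fourier integral operator machinery (Egorov's theorem, symbolic calculus, boundedness of operators with compactly supported symbols) applies word for word, the only cost being a somewhat unusual $\hbar$-dependence in the final remainder.

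First I would quantize the classical reduction of Lemma \ref{Chap4-Lemma-Red-Nh0}: by Egorov's theorem there is a unitary Fourier integral operator $\VhII$ quantizing the symplectomorphism $\Phi_{\mathbf 2}$, so that $\VhII^*\Nh^{[1]}\VhII$ is an $\hbar$-pseudodifferential operator with symbol $\sigma_\hbar=\widehat N_\hbar+\grandO(\hbar^2)$ near $0$. Rescaling $\tau=h\tilde\tau$ and dividing by $h^2=\hbar$, the associated formal series in the sense of Section \ref{Chap4-sectionSecondFormalBNF} is $N_0+N_2+\gamma$ with $\gamma\in\grandO_3$; this amounts to checking, using the bound on $f_{\mathbf 1}^\star$, that each remainder term in Lemma \ref{Chap4-Lemma-Red-Nh0} has degree at least $3$ (and $\grandO(\hbar^2)=\grandO(h^4)$ has degree $8$). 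I would then apply Lemma \ref{Chap4-formalBNF2} to this $\gamma$, obtaining $\kappa,\tau\in\grandO_3$ with $[\kappa,\vert\tilde v_j\vert^2]=0$ and $\rho\in\grandO_{\rdeux}$, pick a smooth compactly supported symbol $Q_h$ with Taylor series $\tau$, and set $\WhII:=e^{-ih^{-1}\Opt Q_h}$. Since $\tau\in\grandO_3$, the Hamiltonian flow of its principal part is $\mathrm{id}+\grandO((t,\tau)^2)$, so by Egorov $\WhII$ quantizes it; hence $\UhII:=\VhII\WhII$ is unitary, quantizes $\tilde\Phi_{\mathbf 2}=\Phi_{\mathbf 2}+\grandO((t,\tau)^2)$, and $\UhII^*\Nh^{[1]}\UhII$ has mixed-quantization symbol with Taylor series $h^2\bigl(N_0+N_2+\kappa+\grandO_{\rdeux}\bigr)$.

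It then remains to identify this operator with $\hbar\Mh+\hbar\RhII$. Since $\kappa$ commutes with every $\vert\tilde v_j\vert^2$, I reorder it as a $\star$-series in the star-powers $(\vert\tilde v_j\vert^2)^{\star\alpha_j}$, exactly as in Section \ref{sec.3.Quantized}; the degree constraint $2\vert\alpha\vert+2\ell\geq3$, i.e.\ $\vert\alpha\vert+\ell\geq2$, forces the associated function $f_{\mathbf 2}^\star(w,J,\sqrt\hbar)$ to be $\grandO((\vert J\vert+\sqrt\hbar)^2)$ near the origin. Because $\nu_j$ (resp.\ the coefficients of $f_{\mathbf 2}^\star$) and the oscillators $\vert\tilde v_j\vert^2$ depend on disjoint groups of variables, their mixed quantizations are tensor products, so $\Opt(\nu_j\vert\tilde v_j\vert^2)=\Oph\nu_j\cdot\Jh^{(j)}$ and $\Opt\kappa=\Oph f_{\mathbf 2}^\star(w,\Jh^{(1)},\dots,\Jh^{(k)},\sqrt\hbar)$: the $\Jh^{(j)}$ commute with one another and with the $w$-quantization, so there is no ordering ambiguity, and on the Hermite sector $h_n(t)$ one reads off the scalar symbol $f_{\mathbf 2}^\star(w,(2n-1)\sqrt\hbar,\sqrt\hbar)$. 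Multiplying back by $\hbar$ yields $\UhII^*\Nh^{[1]}\UhII=\hbar\Mh+\hbar\RhII$ with $\RhII:=\Opt\rho$; since $\rho\in\grandO_{\rdeux}$ and $h=\hbar^{1/2}$, its symbol is $\grandO((t,\hbar^{-1/2}\tau,\hbar^{1/4})^{\rdeux})$ on a neighbourhood of $0$.

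The main point to watch is the interaction of Egorov's theorem and the symbolic calculus with the non-standard quantization $\Opt$, and in particular the fact that the remainder symbol is not bounded uniformly in $\hbar$ as an element of a fixed symbol class; this is exactly what the identity $\Opt(\mathsf a)=\Oph(\mathfrak a)$ of Section \ref{Chap4-sectionSecondFormalBNF} is designed to take care of, so I expect no conceptual difficulty beyond careful bookkeeping.
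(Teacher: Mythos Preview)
Your proposal is correct and follows essentially the same route as the paper: quantize $\Phi_{\mathbf 2}$ by Egorov to obtain $\VhII$, pass to the mixed quantization $\Opt$ with $h=\sqrt\hbar$, apply the formal Birkhoff lemma to $h^{-2}\mathsf N_h$, quantize the resulting series via $e^{-ih^{-1}\Opt Q_h}$, and reorder the commuting part $\kappa$ in star-powers of the oscillators to produce $f_{\mathbf 2}^\star$. Your additional remarks on the degree count for the remainders, the bound on $f_{\mathbf 2}^\star$, and the tensor-product structure of $\Opt(\nu_j|\tilde v_j|^2)$ are all to the point; the only cosmetic slip is writing $\RhII:=\Opt\rho$ for a formal series $\rho$, whereas $\RhII$ is the operator whose symbol has Taylor expansion in $\grandO_{\rdeux}$ (collecting also the Borel-resummation and Egorov errors), but this is exactly how the paper treats it.
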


\begin{proof}
Lemma \ref{Chap4-Lemma-Red-Nh0} gives a symplectomorphism $\Phi_{\mathbf{2}}$ such that: 
\begin{align*}
N^{[1]}_{\hbar} \circ \Phi_{\mathbf{2}} &=  \hbar \widehat{b} (w,s(w))  + \sum_{j=1}^k \nu_j(w) \left( \tau_j^2 + \hbar t_j^2 \right) + \grandO(\vert t \vert^3 \vert \tau \vert^2) \\ & \quad + \grandO(\vert t \vert^3 \hbar) + \grandO(\hbar^2) + \grandO(\hbar \vert \tau \vert ) + \grandO(\vert \tau \vert^3) + \grandO( \vert t \vert \vert \tau \vert^2).
\end{align*}
We can apply the Egorov Theorem to get a Fourier Integral Operator $\VhII$ such that:
$$\VhII^* \Oph ( N^{[1]}_{\hbar} ) \VhII = \Oph ( \hat{N}_{\hbar} ),$$
with $\hat{N}_{\hbar} = N^{[1]}_{\hbar} \circ \Phi_{\mathbf{2}} + \grandO(\hbar^2).$ We define $$\mathsf{N}_h(y,\eta,t,\taut) = \hat{N}_{\hbar}(y,\eta,t,h\taut),$$
and following the notations of Section \ref{Chap4-sectionSecondFormalBNF}, we have the associated formal series:
$$\frac{1}{h^2}\mathsf{N}_h = N_0 + N_2 + \gamma, \quad \gamma \in \grandO_3.$$
We apply Lemma \ref{Chap4-formalBNF2} and we get formal series $\kappa, \rho$ such that:
\[ e^{\frac i h \ad_\rho} (N_0 + N_2 + \gamma) = N_0 + N_2 + \kappa + \grandO_{\rdeux}\,. \]
We take a compactly supported symbol $\mathsf{a}(h,w,t,\tilde{\tau})$ with Taylor series $\rho$. Then the operator
\begin{equation}\label{eq.eh.N.eh}
e^{ih^{-1} \Opt(\mathsf{a})} \Opt  ( h^{-2} \mathsf{N}_h ) e^{-ih^{-1} \Opt(\mathsf{a})}
\end{equation}
has a symbol with Taylor series $N_0 + N_2 + \kappa + \grandO_{\rdeux}$. Since $\kappa \in \grandO_3$ commutes with $\vert \tilde{v}_j \vert^2$, it can be written
$$\kappa = \sum_{2\vert \alpha \vert + 2\ell \geq 3} c^{\star}_{\alpha \ell}(w) \left( \vert \tilde{v}_1 \vert ^2 \right) ^{\star \alpha_1} ... \left( \vert \tilde{v}_k \vert^2 \right) ^{\star \alpha_k }  h^{\ell}.$$
If we take $f_{\mathbf{2}}^{\star}(h,w,J_1,...,J_k)$ a smooth compactly supported function with Taylor series:
$$[f_{\mathbf{2}}^{\star}] = \sum_{2\vert \alpha \vert + 2\ell \geq 3} c_{\alpha \ell}^{\star}(w) J_1^{\alpha_1}... J_k^{\alpha_k} h^{\ell},$$
then the operator \eqref{eq.eh.N.eh} is equal to
\[ \Opt N_0 + \Opt N_2 + \Oph f^{\star}_{\mathbf{2}}(h, w, \Jh^{(1)},...,\Jh^{(k)}) \]
modulo $\grandO_{\rdeux}$.

\medskip

Multiplying by $h^2$, and getting back to the $\hbar$-quantization, we get:
$$e^{ih^{-1} \Opt(\mathsf{a})} \Op (\hat{N}_{\hbar}) e^{-ih^{-1} \Opt(\mathsf{a})} = \hbar \Mh + \hbar \mathcal{R}_{\hbar},$$
with: $$\Mh = \Op \widehat{b}(w,s(w)) + \sum_{j=1}^k \Op \nu_j(w) \Jh^{(j)} + \Op f_{\mathbf{2}}^{\star}(\sqrt{\hbar}, w, \Jh^{(1)},...,\Jh^{(k)}),$$
and $\mathcal{R}_{\hbar}$ a $\sqrt{\hbar}$-pseudodifferential operator with symbol $\grandO_{\rdeux}$.
Note that $\Mh$ is a \newline $\hbar$-pseudodifferential operator whose symbol admits an expansion in powers of $\sqrt{\hbar}$.
\end{proof}

\subsection{Proof of Theorem \ref{Chap4-Thm-spec-Nh0}}\label{Chap4-sec4-4}

In order to prove Theorem \ref{Chap4-Thm-spec-Nh0}, we need the following microlocalization lemma.

\begin{lemma}\label{Chap4-lemma-microlocMh}
Let $\delta \in (0,1/2)$ and $c>0$. Let $\chi_0 \in \Cinf_0(\R^{2s}_{(y,\eta)})$ and $\chi_1 \in \Cinf_0(\R^{2k}_{(t,\tilde{\tau})})$ both equal to $1$ on a neighborhood of $0$. Then every eigenfunction $\psi_{\hbar}$ of $\Nh$ or $\hbar \Mh$ associated to an eigenvalue $\lambda_{\hbar} \leq \hbar (b_0 + c \hbar^{\delta})$ satisfies :
$$\psi_{\hbar} = \mathsf{Op}_{\sqrt{\hbar}}^w  \chi_0(\sqrt{\hbar}^{-\delta} (t,\tilde{\tau})) \Oph \chi_1( y,\eta) \psi_{\hbar} + \grandO(\hbar^{\infty}) \psi_{\hbar}.$$
\end{lemma}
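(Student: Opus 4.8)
The plan is to follow the scheme of Lemmas \ref{Chap4-Lemme-MicrolocLh}--\ref{Chap4-Lemme-MicrolocNh2}: cut off in energy, use the identity $\langle\mathcal A\chi^w\varphi,\chi^w\varphi\rangle=\langle\chi^w\mathcal A\varphi,\chi^w\varphi\rangle+\langle[\mathcal A,\chi^w]\varphi,\chi^w\varphi\rangle$ together with a G\aa rding lower bound, and iterate to gain $\grandO(\hbar^\infty)$. Two features are new: the mixed calculus $\Opt$, in which commutators with cutoffs in $(t,\tilde\tau)$ gain only a power of $\sqrt\hbar$ while those with cutoffs in $(y,\eta)$ gain a full power of $\hbar$; and the fact that the window $(-\infty,\hbar(b_0+c\hbar^\delta)]$ shrinks onto the bottom $\hbar b_0$, so only an $\grandO(\hbar^{1+\delta})$ spectral gap is available. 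After the symplectic reduction of Lemma \ref{Chap4-Lemma-Red-Nh0} and the conjugations of Theorem \ref{Chap4-Thm-2ndFormeNormale} (all quantizing maps fixing $0$ and sending a neighborhood of $0$ to a neighborhood of $0$, with the remainder $\RhII$ absorbed as a perturbation), it suffices to prove the microlocalization for an operator $\mathcal A$ whose mixed symbol is $\mathsf N_h=h^2\,\mathsf n$ with $\mathsf n=N_0+N_2+\grandO_3$, $N_0=\widehat b(w,s(w))\ge b_0$, $N_2=\sum_j\nu_j(w)(t_j^2+\tilde\tau_j^2)$, and identically for $\mathcal A=\hbar\Mh$. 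Fixing $g_\hbar\in\Cinf_0(\R)$ with $g_\hbar\equiv1$ on $(-\infty,\hbar(b_0+c\hbar^\delta)]$ and $\supp g_\hbar\subset(-\infty,\hbar(b_0+2c\hbar^\delta)]$, writing $\varphi=g_\hbar(\mathcal A)\psi$, it is enough as usual to show $\Vert\chi^w g_\hbar(\mathcal A)\Vert_{\mathcal L(\Ld)}=\grandO(\hbar^\infty)$ for the two off-cutoffs.

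For the localization in $(t,\tilde\tau)$ take $\chi^w=\Opt\big((1-\chi_0)(h^{-\delta}(t,\tilde\tau))\big)$. The first term is $\le\hbar(b_0+2c\hbar^\delta)\Vert\chi^w\varphi\Vert^2$ by the energy localization of $\varphi$. The commutator is the crucial point: in the $\Opt$-calculus its leading symbol is $\tfrac hi\{h^2\mathsf n,(1-\chi_0)(h^{-\delta}v)\}_{(t,\tilde\tau)}$; each derivative on the cutoff produces a factor $h^{-\delta}$ but is supported where $|v|\sim h^\delta$, and there $\partial_v(h^2\mathsf n)=\grandO(h^2|v|)=\grandO(h^{2+\delta})$, so this symbol is $\grandO(h\cdot h^{-\delta}\cdot h^{2+\delta})=\grandO(h^3)=\grandO(\hbar^{3/2})$; the $k$-th term of the expansion is $\grandO(h^{4-2\delta})$ or smaller, which is controlled precisely because $\delta<\tfrac12$, and its symbol is supported in a region on which a fattened cutoff $\underline\chi^w$ is $\equiv1$, so $\langle[\mathcal A,\chi^w]\varphi,\chi^w\varphi\rangle\le C\hbar^{3/2}\Vert\underline\chi^w\varphi\Vert\,\Vert\chi^w\varphi\Vert$. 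On the other hand, since all symbols are supported near $0$, on $\supp(1-\chi_0)(h^{-\delta}\cdot)$ one has $|v|\ge r_0h^\delta$, hence $\mathsf n\ge b_0+\tfrac14\min_j\nu_j(0)\,|v|^2-\grandO(h^3)\ge b_0+c_1\hbar^\delta$ with $c_1\gtrsim\min_j\nu_j(0)\,r_0^2$ ($r_0$ the radius of the ball on which $\chi_0\equiv1$), and the G\aa rding inequality gives $\langle\mathcal A\chi^w\varphi,\chi^w\varphi\rangle\ge(\hbar b_0+c_1\hbar^{1+\delta}-C\hbar^{3/2})\Vert\chi^w\varphi\Vert^2$. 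Provided $\chi_0$ is taken equal to $1$ on a neighborhood of $0$ large enough in terms of $c$ and $\min_j\nu_j(0)$, so that $c_1>2c$, comparing the two bounds yields $(c_1-2c)\hbar^{1+\delta}\Vert\chi^w\varphi\Vert^2\le C\hbar^{3/2}\Vert\underline\chi^w\varphi\Vert\,\Vert\chi^w\varphi\Vert$, hence $\Vert\chi^w\varphi\Vert\le C\hbar^{\frac12-\delta}\Vert\underline\chi^w\varphi\Vert$; because $\delta<\tfrac12$ this is a genuine gain, and iterating with a decreasing sequence of cutoffs (whose $1$-regions stay large enough) gives $\grandO(\hbar^\infty)$.

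For the localization in $(y,\eta)$, the easier one, take $\chi^w=\Oph\big((1-\chi_1)(y,\eta)\big)$ and, using the previous step, first insert the $(t,\tilde\tau)$-cutoff so that we may assume $|v|\lesssim h^\delta$. Now $\chi_1$ is at fixed scale, so in the mixed calculus the commutator carries an extra $h$ and $[\mathcal A,\chi^w]=\grandO(h^4)=\grandO(\hbar^2)$, with symbol supported on $\supp\nabla\chi_1$; meanwhile, combining the non-degeneracy of the well of $\widehat b(\cdot,s(\cdot))$ at $w=0$ with an Agmon-type space localization (as in Lemma \ref{Chap4-Lemme-MicrolocNh2}), one gets $\mathsf n\ge N_0-\grandO(h^{3\delta})\ge b_0+c_2$ on $\supp(1-\chi_1)$ for a fixed $c_2>0$. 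G\aa rding then gives $\langle\mathcal A\chi^w\varphi,\chi^w\varphi\rangle\ge\hbar(b_0+\tfrac{c_2}{2})\Vert\chi^w\varphi\Vert^2$, which for small $\hbar$ exceeds $\hbar(b_0+2c\hbar^\delta)\Vert\chi^w\varphi\Vert^2$, so $\Vert\chi^w\varphi\Vert\le C\hbar\,\Vert\underline\chi^w\varphi\Vert$ and the iteration again yields $\grandO(\hbar^\infty)$. Since cutoffs in the disjoint variable groups $(t,\tilde\tau)$ and $(y,\eta)$ commute modulo $\grandO(\hbar^\infty)$, the two localizations combine into the stated identity. The main obstacle is the $(t,\tilde\tau)$ step: one must carry out the mixed $\hbar/\sqrt\hbar$ symbolic bookkeeping precisely enough to see that the relevant commutator is $\grandO(\hbar^{3/2})$ rather than merely $\grandO(\hbar^{1+\delta})$ — which is exactly where $\delta<\tfrac12$ enters, both for the commutator expansion and for the comparison with the gap — and to notice that the available spectral gap is only $\grandO(\hbar^{1+\delta})$, forcing the support of $\chi_0$ to be large enough to create it.
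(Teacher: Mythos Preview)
Your proof is correct and follows the same scheme as the paper's (energy cutoff, commutator identity, G\aa rding lower bound, iteration), with the mixed $\Opt$-calculus bookkeeping carried out accurately; in particular your estimate that the $(t,\tilde\tau)$-commutator is $\grandO(h^3)=\grandO(\hbar^{3/2})$ against a gap of order $\hbar^{1+\delta}$, hence a gain $\hbar^{\frac12-\delta}$ per step, is exactly the right accounting and is precisely where $\delta<\tfrac12$ is used.

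There is one organizational difference worth noting. The paper treats $\Nh^{[1]}$ \emph{directly}, using that the full principal symbol $h^2\langle M(w,t)\tilde\tau,\tilde\tau\rangle+h^2\widehat b(w,t)$ has a non-degenerate minimum at the origin in \emph{all} variables $(w,t,\tilde\tau)$; the constraint $\langle M\tilde\tau,\tilde\tau\rangle+\widehat b(w,t)\le b_0+ch^{2\delta}$ then forces simultaneously $|w|,|t|,|\tilde\tau|\lesssim h^\delta$, and the separate cutoffs follow. You instead first conjugate $\Nh^{[1]}$ by $\UhII$ to reduce to an operator with symbol $h^2(N_0+N_2+\grandO_3)$, where $N_0$ is independent of $(t,\tilde\tau)$, and then run the two localizations sequentially. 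This buys you a cleaner commutator estimate (since $\partial_t N_0=0$), at the cost of an extra step: you must observe that the FIO $\UhII$, quantizing $\tilde\Phi_{\mathbf 2}=\Phi_{\mathbf 2}+\grandO((t,\tau)^2)$ with $\tilde\Phi_{\mathbf 2}(0)=0$, transports the $h^\delta$-scale microlocal support back to an $h^\delta$-neighborhood of $0$ in the original variables. This is standard Egorov, but it should be said. For the application in Section~\ref{Chap4-sec4-4} (bounding $\RhII$ on eigenfunctions) it is in fact the microlocalization \emph{after} conjugation that is used, so your detour is harmless there.

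One small point: in the iteration for the $(t,\tilde\tau)$-cutoff the nested cutoffs $\underline\chi$ have their zero-region near the origin shrinking, which threatens the condition $c_1>2c$. Your parenthetical ``whose $1$-regions stay large enough'' is the right fix, but concretely one takes a sequence of cutoffs whose inner radii decrease geometrically to a fixed $r_0$ (so the derivative bounds worsen only polynomially), which is beaten by the exponential gain $(\hbar^{\frac12-\delta})^N$. The paper glosses over this too.
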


\begin{proof}
Using our mixed quantization and $h= \sqrt{\hbar}$, we have $\Nh^{[1]} = \Opt \mathsf{N}_h^{[1]}$ with
$$\mathsf{N}_h^{[1]}(y,\eta,t,\tilde{\tau}) = h^2 \langle M(y,\eta,t) \tilde{\tau}, \tilde{\tau} \rangle + h^2 \widehat{b}(w,t) + f_{\mathbf{1}}^{\star}(y,\eta,t,h \tilde{\tau}, h^2).$$
The principal part of $\mathsf{N}_h^{[1]}$ is of order $h^2$, and implies a microlocalization of the eigenfunctions where
$$h^2 \langle M(w,t) \tilde{\tau}, \tilde{\tau} \rangle + h^2 \widehat{b}(w,t) \leq \lambda_h \leq h^2 (b_0 + c h^{2 \delta}).$$
Since $\widehat{b}$ admits a unique and non-degenerate minimum $b_0$ at $0$, this implies that $w$ lies in an arbitrarily small neighborhood of $0$, and that :
$$\vert t \vert^2 \leq C h^{2 \delta}, \quad  \vert \tilde{\tau} \vert^2 \leq C h^{2 \delta}.$$
The technical details follow the same ideas of Lemmas \ref{Chap4-Lemme-MicrolocLh}, \ref{Chap4-Lemme-MicrolocNh1} and \ref{Chap4-Lemme-MicrolocNh2}. Now we can focus on $\Mh$ whose principal symbol with respect to the $\Opt$-quantization is
$$\mathsf{M}_0(y,\eta,t,\tilde{\tau}) = \widehat{b}(y,\eta,s(y,\eta)) + \sum_{j=1}^k \nu_j(y,\eta) ( \tilde{\tau}_j^2 + t_j^2).$$
Hence its eigenfunctions are microlocalized where
$$\widehat{b}(y,\eta,s(y,\eta)) + \sum_{j=1}^k \nu_j(y,\eta) ( \tilde{\tau}_j^2 + t_j^2) \leq b_0 + c h^{2\delta},$$
which implies again that $w$ lies in an arbitrarily small neighborhood of $0$ and that
$$\vert t \vert^2 \leq C h^{2\delta}, \quad \vert \tilde{\tau} \vert^2 \leq C h^{2\delta}.$$
\end{proof}

Using the same method as before, we deduce from Theorem \ref{Chap4-Thm-2ndFormeNormale} and Lemma \ref{Chap4-lemma-microlocMh} a comparison of the spectra of $\Nh^{[1]}$ and $\Mh$. With the notation $$N_{\hbar}^{\mathsf{max}}(c,\delta) = \max \lbrace n \in \N, \quad \lambda_n \left( \Nh^{[1]} \right) \leq \hbar (b_0 + c \hbar^{\delta} ) \rbrace,$$ the following lemma concludes the proof of Theorem \ref{Chap4-Thm-spec-Nh0}.

\begin{lemma}
Let $\delta \in (0,1/2)$ and $c>0$. We have
$$\lambda_n(\Nh^{[1]}) = \hbar \lambda_n(\Mh) + \grandO( \hbar^{1+ \delta r_{\mathbf{2}} /2}),$$
uniformly with respect to $n \in [1,N_{\hbar}^{\mathsf{max}}(c,\delta)]$.
\end{lemma}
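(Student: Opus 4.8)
The argument is the same as for Lemma \ref{Chap4-lemme-ineq1}, with $(\Lh,\Nh,\Uh)$ replaced by $(\Nh^{[1]},\hbar\Mh,\UhII)$ and the remainder estimate furnished by Theorem \ref{Chap4-Thm-2ndFormeNormale} together with the microlocalization Lemma \ref{Chap4-lemma-microlocMh}. The plan is to prove the two inequalities
\[ \lambda_n(\Nh^{[1]}) \leq \hbar\,\lambda_n(\Mh) + C\hbar^{1+\delta\rdeux/2}\,, \qquad \hbar\,\lambda_n(\Mh) \leq \lambda_n(\Nh^{[1]}) + C\hbar^{1+\delta\rdeux/2}\,, \]
separately, by the min-max principle: one uses the eigenfunctions of one operator, cut off near their microsupport, as quasimodes for the other, the microlocalization being exactly what makes the remainder $\hbar\RhII$ negligible.

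\medskip

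The key point is the remainder bound. Let $\psi_\hbar$ be a normalized eigenfunction of $\hbar\Mh$ (or of $\Nh^{[1]}$) with eigenvalue $\leq \hbar(b_0+c\hbar^\delta)$. By Lemma \ref{Chap4-lemma-microlocMh}, modulo $\grandO(\hbar^\infty)\psi_\hbar$ the function $\psi_\hbar$ is microlocalized where $(y,\eta)$ lies in an arbitrarily small neighbourhood of $0$ and $\vert t\vert + \vert\tilde\tau\vert \lesssim \hbar^{\delta/2}$; since $\tau = \sqrt\hbar\,\tilde\tau$ this means $\vert t\vert \lesssim \hbar^{\delta/2}$ and $\vert\hbar^{-1/2}\tau\vert \lesssim \hbar^{\delta/2}$. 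By Theorem \ref{Chap4-Thm-2ndFormeNormale}, the symbol of $\RhII$ is $\grandO\big((t,\hbar^{-1/2}\tau,\hbar^{1/4})^{\rdeux}\big)$ on a neighbourhood of $0$, and since $\delta<\tfrac12$ one has $\hbar^{1/4}=o(\hbar^{\delta/2})$, so on the microsupport of $\psi_\hbar$ this symbol is $\grandO(\hbar^{\delta\rdeux/2})$. Inserting the cutoffs of Lemma \ref{Chap4-lemma-microlocMh} into $\langle\RhII\,\cdot\,,\cdot\,\rangle$ is legitimate by the symbolic calculus for the mixed quantization, in the same way the cutoffs were iterated in the proofs of Lemmas \ref{Chap4-Lemme-MicrolocLh}--\ref{Chap4-Lemme-MicrolocNh2}, and we obtain, uniformly in $\psi_\hbar$,
\[ \hbar\,\langle \RhII\psi_\hbar,\psi_\hbar\rangle = \grandO(\hbar^{1+\delta\rdeux/2})\,\Vert\psi_\hbar\Vert^2\,. \]

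\medskip

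With this in hand the min-max argument is routine. Fix $N \leq N_{\hbar}^{\mathsf{max}}(c,\delta)$ and let $\psi_1,\dots,\psi_N$ be the first $N$ eigenfunctions of $\hbar\Mh$; their eigenvalues are $\leq \hbar(b_0+c\hbar^\delta)$, so the previous estimate applies to each. Set $\varphi_n = \UhII\psi_n$ and $V_N = \vect\{\varphi_1,\dots,\varphi_N\}$; since $\UhII$ is unitary, $\dim V_N = N$, and for $\varphi=\UhII\psi\in V_N$ Theorem \ref{Chap4-Thm-2ndFormeNormale} gives
\[ \langle \Nh^{[1]}\varphi,\varphi\rangle = \hbar\,\langle\Mh\psi,\psi\rangle + \hbar\,\langle\RhII\psi,\psi\rangle \leq \big(\hbar\,\lambda_N(\Mh) + C\hbar^{1+\delta\rdeux/2}\big)\Vert\varphi\Vert^2\,, \]
so the min-max principle yields $\lambda_N(\Nh^{[1]}) \leq \hbar\,\lambda_N(\Mh) + C\hbar^{1+\delta\rdeux/2}$. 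The reverse inequality is obtained symmetrically: one takes the first $N$ eigenfunctions of $\Nh^{[1]}$ (which, for $N\leq N_{\hbar}^{\mathsf{max}}(c,\delta)$, have eigenvalue $\leq\hbar(b_0+c\hbar^\delta)$ and hence are microlocalized by Lemma \ref{Chap4-lemma-microlocMh}), pushes them through $\UhII^*$, and uses them as quasimodes for $\hbar\Mh$. A short bookkeeping argument comparing the counting functions $N_{\hbar}^{\mathsf{max}}(c,\delta)$ of $\Nh^{[1]}$ and of $\hbar\Mh$ — which, by the two inequalities just proved, differ by at most a bounded number of indices — shows that both inequalities hold on a common index range, giving the claimed uniform asymptotics and hence the bijection $\Lambda_\hbar$ of Theorem \ref{Chap4-Thm-spec-Nh0}.

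\medskip

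The main obstacle is the remainder bound of the second paragraph: it requires carefully matching the three scales $\vert t\vert$, $\vert\hbar^{-1/2}\tau\vert$, $\hbar^{1/4}$ appearing in the symbol of $\RhII$ — scales dictated by the mixed $\Opt$-quantization and the substitution $\tau=\sqrt\hbar\,\tilde\tau$ — against the microsupport estimates of Lemma \ref{Chap4-lemma-microlocMh}, and then checking that $\RhII$ composed with the microlocalization cutoffs is still a controlled operator in the mixed pseudodifferential calculus, so that the $\grandO(\hbar^\infty)$ tails can be absorbed by iteration. Everything else is a verbatim repetition of the arguments of Section \ref{sec.4.conclusion}.
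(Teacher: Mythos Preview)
Your proof is correct and follows essentially the same approach as the paper's own proof, which simply refers back to Lemma \ref{Chap4-lemme-ineq1} and notes that the microlocalization $\vert t\vert+\vert\tilde\tau\vert\lesssim\hbar^{\delta/2}$ turns the remainder $\hbar\RhII$ into an $\grandO(\hbar^{1+\delta\rdeux/2})$ error. Your version is more explicit (in particular about the scale-matching $\hbar^{1/4}\lesssim\hbar^{\delta/2}$ and the bookkeeping between the two counting functions), but there is no substantive difference in method.
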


\begin{proof}
We use the same method as before (see lemma \ref{Chap4-lemme-ineq1}). The remainder $\RhII$ is $\grandO((t,\tilde{\tau},\sqrt{\hbar})^{r_{\mathbf{2}}})$ and the eigenfunctions are microlocalized where  $\vert t \vert + \vert \tilde{\tau} \vert \leq C \hbar^{\delta/2}$. Hence the $\hbar \RhII$ term yield an error in $\hbar^{1 + \delta r_{\mathbf{2}} /2}$.
\end{proof}

\section{Proof of Corollary \ref{cor.reduction.Mh0}}\label{sec.Coro1}

In this section we prove that the spectrum of $\Lh$ below $\hbar b_0 + \hbar^{3/2}(\nu(0)+2c)$ is given by the spectrum of $\hbar \Mh^{[1]}$, up to $\grandO(\hbar^{r/4-\varepsilon})$. We recall that $c \in (0, \min_j \nu_j(0))$ and $r = \min (2 \run, \rdeux+4)$.

\medskip

We can apply Theorem \ref{thm.main.first.bnf}, for $b_1 > b_0$ arbitrarily close to $b_0$. Thus the spectrum of $\Lh$ in $(-\infty, b_1 \hbar)$ is given by the spectrum of $\bigoplus_{n\in \N^s} \Nh^{[n]}$ modulo $\grandO(\hbar^{\run/2 - \varepsilon}) = \grandO(\hbar^{r/4-\varepsilon})$. Moreover, the symbol of $\Nh^{[n]}$ for $n \neq (1, \cdots , 1)$ satisfies
\[ N_\hbar^{[n]}(y,\eta,t,\tau) \geq \hbar (b_0 + 2 \min \beta_j - C \hbar) \,, \]
et we deduce from the G\aa rding inequality that
\[ \langle \Nh^{[n]} \psi, \psi \rangle \geq \hbar b_1 \Vert \psi \Vert^2 \,, \quad \forall \psi \in \Ld(\R^{s+k}) \,, \]
if $b_1$ is close enough to $b_0$. Hence the spectrum of $\Lh$ below $b_1 \hbar$ is given by the spectrum of $\Nh^{[1]}$. Then, we apply Theorem \ref{Chap4-Thm-spec-Nh0} for $\delta$ close enough to $1/2$, and we see that the spectrum of $\Nh^{[1]}$ below $(b_0 + \hbar^\delta)\hbar$ is given by the spectrum of $\bigoplus_{n \in \N^k} \hbar \Mh^{[n]}$, modulo $\grandO(\hbar^{1+ \rdeux/4 - \varepsilon}) = \grandO(\hbar^{r/4})$. The symbol of $\Mh^{[n]}$ for $n \neq 1$ satisfies:
\[ \Mh^{[n]}(y,\eta) \geq b_0 + \hbar^{1/2} \sum_{j=1}^k \nu_j(y,\eta) (2n_j-1) - C \hbar \,,\]
and the eigenfunctions of $\Mh^{[n]}$ are microlocalized on an arbitrarily small neighborhood of $(y,\eta)=0$ (Lemma \ref{Chap4-lemma-microlocMh}), and $M_\hbar^{[n]}$ satisfies on this neighborhood:
\begin{align*}
M_\hbar^{[n]}(y,\eta) &\geq b_0 + \hbar^{1/2} \sum_{j=1}^k \nu_j(0) (2n_j-1) - \hbar^{1/2} \varepsilon - C \hbar \\
& \geq b_0 + \hbar^{1/2} (\nu(0) + 2 \min_j \nu_j(0) - \varepsilon) -C \hbar\,.
\end{align*}
Using the G\aa rding inequality, the spectrum of $\Mh^{[n]}$ ($n \neq 1$) is thus $\geq b_0 + \hbar^{1/2}(\nu(0)+2c)$ for $\varepsilon$ and $\hbar$ small enough. It follows that the spectrum of $\Nh^{[1]}$ below $\hbar b_0 + \hbar^{3/2}(\nu(0)+2c)$ is given by the spectrum of $\hbar \Mh^{[1]}$.

\section{Proof of Corollary \ref{cor.eigenvalue.asymptotics}}\label{sec.Coro2}

In this section we explain where the asymptotics for $\lambda_j(\Lh)$ come from (Corollary \ref{cor.eigenvalue.asymptotics}). First we can use Corollary \ref{cor.reduction.Mh0} so that the spectrum of $\Lh$ below $\hbar b_0 + \hbar^{3/2}(\nu(0)+2c)$ is given by $\Mh^{[1]}$, modulo $\grandO(\hbar^{r/4-\varepsilon})$. But the symbol of $\Mh^{[1]}$ has the following expansion
\[ M_{\hbar}^{[1]}(w) = \widehat{b}(w, s(w)) + \hbar^{1/2} \nu(0) + \hbar^{1/2} \nabla \nu (0) \cdot w + \hbar \tilde{c}_0 + \grandO( \hbar w +\hbar^{3/2} + \hbar^{1/2} w^2 )\,, \]
with $\nu(w) = \sum_{j=1}^k$. The principal part admits a unique minimum at $0$, which is non-degenerate. The asymptotics of the first eigenvalues of such an operator are well-known. First one can make a linear change of canonical coordinates diagonalizing the Hessian of $\hat b$, and get a symbol of the form
\[ \widehat{M}_{\hbar}^{[1]}(w) = b_0 + \sum_{j=1}^s \mu_j \left( \eta_j^2 + y_j^2 \right) + \hbar^{1/2} \nu(0) + \hbar^{1/2} \nabla \nu (0) \cdot w + \hbar \tilde{c}_0 + \grandO( w^3 + \hbar w  + \hbar^{3/2} + \hbar^{1/2} w^2) \,. \]
One can factor the $\nabla \nu (0) \cdot w$ term to get
\begin{align*}
\widehat{M}_{\hbar}^{[1]}(w) = b_0 + \sum_{j=1}^s \mu_j & \left(  \left(\eta_j + \frac{\partial_{\eta_j} \nu (0)}{2\mu_j} \hbar^{1/2} \right)^2 + \left( y_j + \frac{\partial_{y_j} \nu (0)}{2\mu_j} \hbar^{1/2} \right)^2 \right) + \hbar^{1/2} \nu(0) + \hbar c_0\\ 
&+ \grandO( w^3 + \hbar w + \hbar^{3/2} + \hbar^{1/2} w^2 )\,,
\end{align*}
with a new $c_0 \in \R$. Conjugating $\Op \widehat{M}_\hbar^{[1]}$ by the unitary operator $U_\hbar$,
\[ U_{\hbar} v(x) = \mathsf{exp} \left( \frac{i}{\sqrt{\hbar}} \sum_{j=1}^s \frac{\partial_{\eta_j} \nu (0)}{2 \mu_j} y_j \right)  v \left( x - \sum_{j=1}^s \frac{\partial_{y_j} \nu (0)}{2 \mu_j} \hbar^{1/2} \right)\,, \]
amounts to make a phase-space translation and reduces the symbol to
\begin{align*}
\tilde{M}_{\hbar}^{[1]}(w) = b_0 + \sum_{j=1}^s \mu_j (\eta_j^2 + y_j^2) + \hbar^{1/2} \nu(0) + \hbar c_0 + \grandO( w^3 + \hbar w + \hbar^{3/2} + \hbar^{1/2} w^2).
\end{align*}
For an operator with such symbol (i.e. harmonic oscillator + remainders) one can apply the results of \cite{CharlesVuNgoc} (Thm 4.7) or \cite{HelSjo1}, and deduce that the $j$-th eigenvalue $\lambda_j(\Mh^{[1]})$ admits an asymptotic expansion in powers of $\hbar^{1/2}$ such that
\[ \lambda_j(\Mh^{[1]}) = b_0 + \hbar^{1/2} \nu(0) + \hbar( c_0 + E_j) + \hbar^{3/2} \sum_{m=0}^{\infty} \alpha_{j,m} \hbar^{m/2}\,, \]
where $\hbar E_j$ is the $j$-th repeated eigenvalue of the harmonic oscillator with symbol \newline $\sum_{j=1}^s \mu_j (\eta_j^2 + y_j^2)$.

\bigskip

\appendix

\section{Local coordinates} \label{sectioncoordinates}

If we choose local coordinates $q=(q_1,...,q_d)$ on $M$, we get the corresponding vector fields basis $(\partial_{q_1}, ..., \partial_{q_d})$ on $T_qM$, and the dual basis $(\dd q_1 , ..., \dd q_d)$ on $T_qM^*$. In these basis, $g_q$ can be identified with a symmetric matrix $(g_{ij}(q))$ with determinant $\vert g \vert$, and $g^*_q$ is associated with the inverse matrix $(g^{ij}(q))$. We can write the $1$-form $A$ and the $2$-form $B$ in the coordinates:
$$A \equiv A_1 \dd q_1 + ... + A_d \dd q_d,$$
$$B= \sum_{i<j} B_{ij} \dd q_i \wedge \dd q_j,$$
with $\A = (A_j)_{1 \leq j \leq d} \in \mathcal{C}^{\infty}(\R^d,\R^d)$ and
\begin{align}\label{BenCoordonnees}
B_{ij} = \partial_i A_j - \partial_j A_i = (\ ^t \dd \A - \dd \A)_{ij}.
\end{align}
Let us denote by $(\B_{ij}(q))_{1 \leq i,j \leq d}$ the matrix of the operator $\B(q) : T_qM \rightarrow T_qM$ in the basis $(\partial_{q_1} , ..., \partial_{q_d})$. With this notation, equation (\ref{defBmatrix}) relating $\B$ to $B$ can be rewritten:
$$\forall Q,\tilde{Q} \in \R^d, \quad \sum_{ijk} g_{kj} \B_{ki} Q_i \tilde{Q}_{j} = \sum_{ij} B_{ij} Q_i \tilde{Q}_j,$$
which means that
\begin{align} \label{relationBBmatrice}
\forall i,j, \quad B_{ij} = \sum_k g_{kj} \B_{ki}.
\end{align}
Finally, in the coordinates, $H$ is given by:
\begin{align}\label{HamiltonienCoordonnees}
H(q,p) = \sum_{i,j} g^{ij}(q) (p_i - A_i(q))(p_j - A_j(q)),
\end{align}
and $\Lh$ acts as the differential operator:
\begin{align}\label{Lhcoord}
\Lh^{\text{coord}} = \sum_{k,l =1}^d \vert g \vert^{-1/2} (i\hbar \partial_k + A_k) g^{kl} \vert g \vert^{1/2} (i\hbar \partial_l+A_l).
\end{align}

\section{\textbf{Darboux-Weinstein lemmas}}

We used the following presymplectic Darboux Lemma.

\begin{theorem}
Let $M$ be a $d$-dimensional manifold endowed with a closed constant-rank two form $\omega$. We denote $2s$ the rank of $\omega$ and $k$ the dimension of its kernel. For every $q_0 \in M$, there are a neighborhood $V$ of $q_0$, a neighborhood $U$ of $0 \in \R^{2s+k}_{(y,\eta,t)}$, and a diffeomorphism:
$$\varphi : U \rightarrow V,$$
such that $$\varphi^* \omega = \dd \eta \wedge \dd y.$$
\end{theorem}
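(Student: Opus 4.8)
The plan is to reduce this presymplectic (constant-rank) statement to the classical symplectic Darboux theorem by means of the foliation defined by $\ker\omega$. \textbf{Step 1 (the kernel is integrable).} Since $\omega$ has constant rank $2s$, the pointwise kernels $\ker\omega_q$ form a smooth rank-$k$ distribution $\mathcal K$, and I would first check it is involutive: for local sections $X,Y$ of $\mathcal K$, Cartan's calculus together with $\dd\omega=0$ gives
\[
\iota_{[X,Y]}\omega = \mathcal{L}_X(\iota_Y\omega) - \iota_Y(\mathcal{L}_X\omega) = -\,\iota_Y\bigl(\dd(\iota_X\omega)+\iota_X\,\dd\omega\bigr) = 0 ,
\]
because $\iota_X\omega=\iota_Y\omega=0$; hence $[X,Y]$ is again a section of $\mathcal K$. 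By the Frobenius theorem there are, near $q_0$, coordinates $(u,t)=(u_1,\dots,u_{2s},t_1,\dots,t_k)$ centered at $q_0$ in which the leaves of the foliation are the slices $\{u=\text{const}\}$, i.e.\ $\partial_{t_1},\dots,\partial_{t_k}$ span $\mathcal K$.

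\textbf{Step 2 ($\omega$ descends to a symplectic form).} In these coordinates $\iota_{\partial_{t_\ell}}\omega=0$, so $\omega$ carries no $\dd t$-component, $\omega=\sum_{i<j}a_{ij}(u,t)\,\dd u_i\wedge\dd u_j$. Expanding $\dd\omega=0$ and isolating the $3$-form components of type $\dd t_\ell\wedge\dd u_i\wedge\dd u_j$ forces $\partial_{t_\ell}a_{ij}=0$ for all indices, so the coefficients depend on $u$ alone; the remaining components say that $\omega_0:=\sum_{i<j}a_{ij}(u)\,\dd u_i\wedge\dd u_j$ is closed on a neighborhood of $0$ in $\R^{2s}_u$. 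Since $\omega$ has rank $2s$ with kernel exactly $\vect(\partial_{t_1},\dots,\partial_{t_k})$, the form $\omega_0$ is nondegenerate, hence symplectic, and $\omega=\pi_u^*\omega_0$ with $\pi_u(u,t)=u$.

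\textbf{Step 3 (classical Darboux and conclusion).} I would then invoke the classical symplectic Darboux theorem (for instance via Moser's homotopy argument) to obtain a diffeomorphism $\psi$ from a neighborhood of $0$ in $\R^{2s}_{(y,\eta)}$ onto a neighborhood of $0$ in $\R^{2s}_u$ with $\psi^*\omega_0=\dd\eta\wedge\dd y$. Reading everything in the $(u,t)$-chart, set $\varphi(y,\eta,t):=(\psi(y,\eta),t)$ and let $\pi_{(y,\eta)}(y,\eta,t)=(y,\eta)$; then $\pi_u\circ\varphi=\psi\circ\pi_{(y,\eta)}$, whence $\varphi^*\omega=\varphi^*\pi_u^*\omega_0=\pi_{(y,\eta)}^*\psi^*\omega_0=\dd\eta\wedge\dd y$ on a suitable neighborhood $U$ of $0$, which is the claim.

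The only substantive input is the classical Darboux theorem used in Step 3; everything else is Frobenius together with the elementary observation that $\dd\omega=0$ removes the $t$-dependence of the coefficients. I therefore expect the main obstacle to be nothing more than the local existence of the Darboux chart for $\omega_0$, which is standard; if a self-contained treatment is preferred, Step 3 may be carried out by Moser's trick directly, interpolating $\omega_s=\omega_{\mathrm{std}}+s(\omega_0-\omega_{\mathrm{std}})$ between $\omega_0$ and the constant form $\omega_{\mathrm{std}}$ obtained by a linear change of the $u$-coordinates diagonalizing $\omega_0$ at $0$, writing $\omega_0-\omega_{\mathrm{std}}=\dd\beta$ with $\beta$ vanishing at $0$, and integrating up to time $1$ the time-dependent vector field $X_s$ defined by $\iota_{X_s}\omega_s=-\beta$ --- this being the point where one shrinks the neighborhood so that the flow exists.
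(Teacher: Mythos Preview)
Your proof is correct and follows the standard route to the presymplectic Darboux lemma: Frobenius on the involutive kernel distribution, then closedness to kill the $t$-dependence, then classical Darboux on the quotient. The paper itself does not prove this statement; it is stated in the appendix as a known result and used as a black box (Lemma~\ref{lem.Darboux} in the body), so there is no ``paper's own proof'' to compare against. Your argument is exactly the kind of self-contained justification one would give.
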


We also used the following Weinstein result (see \cite{Weinstein}). We follow the proof given in \cite{Birkhoff2D}.

\begin{theorem}\label{thm.Weinstein}
Let $\omega_0$ and $\omega_1$ be two $2$-forms on $\R^{d}$ which are closed and non-degenerate. Let us split $\R^d$ into $\R^k_x \times \R^{d-k}_{y}$. We assume that $\omega_0 = \omega_1 + \grandO( \vert x \vert^\alpha )$, for some $\alpha \geq 1$. Then there exists a neighborhood of $0 \in \R^d$ and a change of coordinates $\psi$ on this neighborhood such that:
$$\psi^* \omega_1 = \omega_0 \quad \text{and} \quad \psi = \mathsf{Id}+ \grandO(\vert x \vert^{\alpha+1}).$$
\end{theorem}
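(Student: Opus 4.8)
The plan is to use Moser's path method (Weinstein's homotopy trick). First I would set, for $t\in[0,1]$,
\[ \omega_t = \omega_0 + t(\omega_1-\omega_0) = \omega_1 + (1-t)(\omega_0-\omega_1). \]
Since $\omega_0-\omega_1 = \grandO(|x|^\alpha)$ vanishes on $\{x=0\}$ and $\omega_1$ is non-degenerate at $0$, every $\omega_t$ agrees with $\omega_1$ at $0$, hence is closed and non-degenerate on a small ball $V$ around $0$, uniformly in $t$ by compactness of $[0,1]$. The aim is to build a time-dependent vector field $X_t$ on $V$ whose flow $\psi_t$ (with $\psi_0=\mathsf{Id}$) satisfies $\psi_t^*\omega_t = \omega_0$; then $\psi:=\psi_1$ is the map we want.

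Differentiating $\psi_t^*\omega_t$ in $t$ and using Cartan's formula $\mathcal{L}_{X_t}\omega_t = \dd(\iota_{X_t}\omega_t)$ (valid as $\dd\omega_t=0$), the equation $\frac{\dd}{\dd t}(\psi_t^*\omega_t)=0$ reduces to $\dd(\iota_{X_t}\omega_t) + (\omega_1-\omega_0) = 0$. The crucial step is therefore to find a $1$-form $\sigma$ with $\dd\sigma = \omega_1-\omega_0$ \emph{and} $\sigma = \grandO(|x|^{\alpha+1})$. I would obtain this from a relative Poincar\'e lemma adapted to the splitting $\R^d = \R^k_x\times\R^{d-k}_y$: applying the homotopy operator associated with the fibrewise contraction $h_s(x,y)=(sx,y)$, $s\in[0,1]$, to the closed form $\omega_1-\omega_0$ yields an explicit primitive, and since the integrand inherits the $\grandO(|x|^\alpha)$ vanishing of $\omega_1-\omega_0$ together with one extra power of $x$ coming from the contraction vector field $\partial_s h_s$, the resulting primitive $\sigma$ vanishes to order $\alpha+1$ in $x$.

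Given such a $\sigma$, I would define $X_t$ by $\iota_{X_t}\omega_t = -\sigma$, which is possible and unique because $\omega_t$ is non-degenerate on $V$; boundedness of $\omega_t^{-1}$ then yields $X_t = \grandO(|x|^{\alpha+1})$ uniformly in $t$, so in particular $X_t$ vanishes on $\{x=0\}$. After shrinking $V$ once more, the flow $\psi_t$ of $X_t$ is thus well defined for all $t\in[0,1]$, fixes $\{x=0\}$ pointwise, and satisfies $\psi_t = \mathsf{Id} + \grandO(|x|^{\alpha+1})$. By construction $\frac{\dd}{\dd t}(\psi_t^*\omega_t) = \psi_t^*(\dd(\iota_{X_t}\omega_t) + (\omega_1-\omega_0)) = \psi_t^*(-\dd\sigma + \dd\sigma) = 0$, and since $\psi_0^*\omega_0=\omega_0$ this gives $\psi_1^*\omega_1 = \omega_0$; setting $\psi=\psi_1$ concludes. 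I expect the only real obstacle to be the second step: an arbitrary primitive of $\omega_1-\omega_0$ is merely bounded, and one genuinely needs the fibrewise homotopy operator to guarantee vanishing to order $\alpha+1$ in the $x$-directions — the remainder is the standard Moser computation.
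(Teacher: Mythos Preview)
Your proposal is correct and follows essentially the same approach as the paper: both use the fibrewise contraction $(x,y)\mapsto(sx,y)$ to produce a primitive $\sigma$ of $\omega_1-\omega_0$ with the extra order of vanishing $\grandO(|x|^{\alpha+1})$, and then run the standard Moser deformation $\omega_t=\omega_0+t(\omega_1-\omega_0)$ with the vector field determined by $\iota_{X_t}\omega_t=-\sigma$. The only cosmetic difference is the order of presentation (the paper constructs $\sigma$ first, then sets up Moser's path); your identification of the relative Poincar\'e step as the only genuine content matches the paper exactly.
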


\begin{proof}
First we recall how to find a $1$-form $\sigma$ on a neighborhood of $x=0$ such that:
$$\tau := \omega_1 - \omega_0 = \dd \sigma, \quad \text{and} \quad \sigma = \grandO(\vert x \vert^{\alpha+1}).$$
We define the family $(\phi_t)_{0 \leq t\leq 1}$ by:
$$\phi_t(x,y) = (tx,y).$$
We have:
\begin{align}\label{eq15291}
\phi_0^* \tau = 0 \quad \text{and} \quad \phi_1^* \tau = \tau.
\end{align}
Let us denote by $X_t$ the vector field associated with $\phi_t$:
$$X_t = \frac{\dd \phi_t}{\dd t} \circ \phi_t^{-1} = t^{-1}(x,0).$$
The Lie derivative of $\tau$ along $X_t$ is given by $\phi_t^* \mathcal{L}_{X_t} \tau = \frac{\dd}{\dd t} \phi_t^* \tau$. From the Cartan formula we have:
$$\mathcal{L}_{X_t} \tau = \iota(X_t) \dd \tau + \dd (\iota(X_t)).$$ Since $\tau$ is closed, $\dd \tau = 0$, and:
\begin{align}\label{eq15290}
\frac{\dd}{\dd t} \phi_t^* \tau = \dd (\phi_t^* \iota(X_t) \tau ).
\end{align}
We choose the following $1$-form (where $(e_j)$ denotes the canonical basis of $\R^d$):
$$\sigma_t := \phi_t^* \iota(X_t) \tau = \sum_{j=1}^k x_j \tau_{\phi_t(x,y)} (e_j, \nabla \phi_t(.)) = \grandO( \vert x \vert^{\alpha +1}).$$
Equation (\ref{eq15290}) shows that $t \mapsto \phi_t^* \tau$ is smooth on $[0,1]$. Thus, we can define $\sigma = \int_0^1 \sigma_t \dd t$. It follows from (\ref{eq15290}) and (\ref{eq15291}) that:
$$\frac{\dd}{\dd t} \phi_t^* \tau = \dd \sigma_t \quad \text{and} \quad \tau = \dd \sigma.$$
Then we use the Moser deformation argument. For $t\in [0,1]$, we let $\omega_t = \omega_0 + t (\omega_1 - \omega_0).$ The $2$-form $\omega_t$ is closed and non degenerate on a small neighborhood of $x=0$. We look for $\psi_t$ such that:
$$\psi_t^* \omega_t = \omega_0.$$
For that purpose, let us determine the associated vector field $Y_t$:
$$\frac{\dd}{\dd t}\psi_t = Y_t(\psi_t).$$
The Cartan formula yields:
$$0 = \frac{\dd}{\dd t} \psi_t^* \omega_t = \psi_t^* \left( \frac{\dd}{\dd t} \omega_t + \iota(Y_t)\dd \omega_t + \dd ( \iota(Y_t)\omega_t) \right).$$
So :
$$\omega_0 -\omega_1 = \dd ( \iota (Y_t) \omega_t ),$$
and we are led to solve:
$$\iota(Y_t)\omega_t = -\sigma.$$
By non degeneracy of $\omega_t$, this determines $Y_t$. $\psi_t$ exists until time $t=1$ on a small enough neighborhood of $x=0$, and $\psi_t^*\omega_t = \omega_0$. Thus $\psi = \psi_1$ is the desired diffeomorphism. Since $\sigma = \grandO(\vert x \vert^{\alpha+1})$, we get $\psi = \mathsf{Id} + \grandO(\vert x \vert^{\alpha+1}).$
\end{proof}

\section{Pseudodifferential operators}

We refer to \cite{Zworski} and \cite{Martinez} for the general theory of $\hbar$-pseudodifferential operators. If $m\in \Z$, we denote by $$S^m(\R^{2d})= \lbrace a \in \mathcal{C}^{\infty}(\R^{2d}), \vert \partial^{\alpha}_x \partial^{\beta}_{\xi} a \vert \leq C_{\alpha \beta} \langle \xi \rangle ^{m-\vert \beta \vert}, \quad \forall \alpha, \beta \in \N^d \rbrace$$ the class of Kohn-Nirenberg symbols. If $a$ depends on the semiclassical parameter $\hbar$, we require that the coefficients $C_{\alpha \beta}$ are uniform with respect to $\hbar \in (0, \hbar_0]$. For $a_{\hbar} \in S^m(\R^{2d})$, we define its associated Weyl quantization $\Op(a_{\hbar})$ by the oscillatory integral
$$\mathcal{A}_{\hbar}u(x) = \Op (a_{\hbar})u(x)= \frac{1}{(2\pi \hbar)^d} \int _{\R^{2d}} e^{\frac{i}{\hbar} \langle x-y, \xi \rangle } a_{\hbar}\left(\frac{x+y}{2},\xi \right) u(y)\dd y \dd \xi,$$ and we denote:
$$a_{\hbar} = \sigma_{\hbar}(\mathcal{A}_{\hbar}).$$

If $M$ is a compact manifold, a pseudodifferential operator $\mathcal{A}_{\hbar}$ on $\Ld(M)$ is an operator acting as a pseudodifferential operator in coordinates. Then the principal symbol of $\mathcal{A}_{\hbar}$  (and its Kohn-Nirenberg class) does not depend on the coordinates, and we denote it by $\sigma_0(\mathcal{A}_{\hbar}).$ The subprincipal symbol $\sigma_1(\mathcal{A}_{\hbar})$ is also well-defined, up to imposing the charts to be volume-preserving (in other words, if we see $\mathcal{A}_{\hbar}$ as acting on half-densities, its subprincipal symbol is well defined). In the case where $M$ is a compact manifold, $\Lh$ is a pseudodifferential operator, and its principal and subprincipal symbols are:
$$\sigma_0(\Lh) = H, \quad \sigma_1(\Lh) = 0.$$

If $M=\R^d$ and $m$ is an order function on $\R^{2d}$, we denote by
$$S(m) = \lbrace a \in \mathcal{C}^{\infty}(\R^{2d}), \vert \partial^{\alpha}_x \partial^{\beta}_{\xi} a \vert \leq C_{\alpha \beta} m(x,\xi), \quad \forall \alpha, \beta \in \N^d \rbrace$$
the class of standard symbols, and we similarly define the operator $\Op(a)$ for such symbols. In this case, we assume that $B$ belongs to some standard class. This is equivalent to assume that $H$ belongs to some (other) standard class. Then, $\Lh$ is a pseudodifferential operator with total symbol $H$.

\section{\textbf{Egorov Theorem}}

In this paper, we used several versions of the Egorov Theorem. See for example \cite{Robert}, \cite{Zworski}, or \cite{Birkhoff3D}.

\begin{theorem}
Let $P$ and $Q$ be $\hbar$-pseudodifferential operators on $\R^d$, with symbols $p \in S(m)$, $q\in S(m')$, where $m$ and $m'$ are order functions such that:
$$m' = \grandO(1), \quad mm' = \grandO'(1).$$
Then the operator $e^{\frac{i}{\hbar}Q} P e^{-\frac{i}{\hbar}Q}$ is a pseudodifferential operator whose symbol is in $S(m)$, and its symbol is:
$$p \circ \kappa + \hbar S(1),$$
where the canonical transformation $\kappa$ is the time-$1$ Hamiltonian flow associated with $q$.
\end{theorem}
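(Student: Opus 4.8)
The plan is to realise the conjugation as a Heisenberg evolution and to identify the transported symbol by solving a transport equation along the Hamiltonian flow of $q$, controlling the error by Duhamel's principle (equivalently, by a Cauchy--Lipschitz argument in a space of symbols).

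First I would introduce the one-parameter family $A(t)=e^{\frac{i}{\hbar}tQ}\,P\,e^{-\frac{i}{\hbar}tQ}$, $t\in[0,1]$. Since the symbol $q$ is real-valued (as in every use of this theorem in the paper) and $m'=\grandO(1)$, the operator $Q$ is bounded and self-adjoint, so $e^{\pm\frac{i}{\hbar}tQ}$ is unitary and $A(t)$ is well defined with $A(0)=P$; differentiating gives the operator ODE $A'(t)=\frac{i}{\hbar}[Q,A(t)]$. The structural point, which is precisely what the two order conditions guarantee, is that $a\mapsto\frac{i}{\hbar}[Q,\Op a]$ maps $\Op(S(m))$ into itself uniformly in $\hbar$: by the Weyl symbol calculus its symbol is $\{q,a\}+\hbar^{2}r_\hbar(a)$, where $\{q,\cdot\}$ and the (Borel-resummed, hence exact) remainder $r_\hbar$ are linear maps $S(m)\to S(m)$ whose seminorm bounds are independent of $\hbar$ --- here $m'=\grandO(1)$ and $mm'=\grandO'(1)$ are exactly what prevents any loss of decay in the Poisson bracket and in the composition.

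I would then solve, in the Fr\'echet space $S(m)$, the linear ODE $\partial_t a_t=\{q,a_t\}+\hbar^{2}r_\hbar(a_t)$ with $a_0=p$. Its generator is linear and uniformly bounded on $S(m)$, so by Picard--Lindel\"of the solution $t\mapsto a_t\in S(m)$ exists on $[0,1]$ uniformly in $\hbar$; quantizing and using injectivity of $\Op$, the family $\Op(a_t)$ solves the same operator ODE as $A(t)$ with the same initial datum, whence $A(t)=\Op(a_t)$ by uniqueness. In particular $A(1)$ is a pseudodifferential operator with symbol in $S(m)$. To pin the symbol down, note that the Hamiltonian flow $\kappa_t$ of $q$ is complete on $[0,1]$ and tame (its differential and all higher derivatives are controlled) precisely because of the order conditions, so $f\mapsto f\circ\kappa_t$ preserves $S(m)$ uniformly, and $b_t:=p\circ\kappa_t$ solves $\partial_t b_t=\{q,b_t\}$, $b_0=p$. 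Subtracting, $\delta_t:=a_t-b_t$ satisfies $\partial_t\delta_t=\{q,\delta_t\}+\hbar^{2}r_\hbar(a_t)$ with $\delta_0=0$, and Duhamel along the flow gives $\delta_t=\hbar^{2}\int_0^t (r_\hbar(a_s)\circ\kappa_{t-s})\,ds\in\hbar^{2}S(m)$. At $t=1$ this is the assertion: the symbol of $e^{\frac{i}{\hbar}Q}Pe^{-\frac{i}{\hbar}Q}$ equals $p\circ\kappa+\hbar S(m)$, hence $p\circ\kappa+\hbar S(1)$ as stated (the Weyl quantization in fact yields the sharper $\hbar^{2}S(m)$).

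The transport computation itself is routine; the main obstacle is the uniform-in-$\hbar$ bookkeeping. One must (i) verify that $\frac{i}{\hbar}[Q,\cdot]$ genuinely maps $\Op(S(m))$ to itself with $\hbar$-independent bounds --- this is where $m'=\grandO(1)$ and $mm'=\grandO'(1)$ enter, and care is needed because the symbol expansion of a commutator is only asymptotic, so the remainder must be kept as an exact term with uniform estimates; and (ii) justify the ODE in the Fr\'echet space of symbols together with the identification $A(t)=\Op(a_t)$ by uniqueness (on a suitable scale of weighted spaces if $m$ is unbounded). An alternative route for (ii) is to quote that $e^{-\frac{i}{\hbar}tQ}$ is a semiclassical Fourier integral operator quantizing $\kappa_{-t}$ and to invoke the composition theorem $\mathrm{FIO}\circ\Psi\mathrm{DO}\circ\mathrm{FIO}=\Psi\mathrm{DO}$; but establishing the global FIO property on $\R^d$ requires the very same control of the flow and of the amplitudes, so there is no real shortcut.
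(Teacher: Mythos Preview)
Your argument is correct and follows the standard proof of Egorov's theorem: introduce the Heisenberg evolution $A(t)=e^{\frac{i}{\hbar}tQ}Pe^{-\frac{i}{\hbar}tQ}$, derive the operator ODE $A'(t)=\frac{i}{\hbar}[Q,A(t)]$, solve the corresponding symbol-level transport equation, and identify the leading part as $p\circ\kappa_t$ via Duhamel. The technical points you flag --- uniform boundedness of $a\mapsto\{q,a\}$ and of the Moyal remainder as maps $S(m)\to S(m)$, completeness and tameness of the flow $\kappa_t$ (which follow from boundedness of $\nabla q$ together with the temperance of the order function $m$), and the uniqueness argument identifying $A(t)$ with $\Op(a_t)$ --- are all the right ones, and your observation that the Weyl calculus actually yields a remainder of order $\hbar^2$ rather than $\hbar$ is also correct.

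There is nothing to compare with in the paper itself: the theorem is stated in the appendix as a known result and no proof is given; the paper simply refers to Robert, Zworski, and \cite{Birkhoff3D}. Your outline is precisely the argument one finds in those references (e.g.\ Zworski, \S 11), so in that sense your approach coincides with what the paper implicitly invokes.

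Two minor remarks. First, the phrase ``Picard--Lindel\"of in the Fr\'echet space $S(m)$'' deserves a word of caution: existence and uniqueness for linear ODEs with continuous generators in complete locally convex spaces do hold, but this is not the Banach-space Picard theorem verbatim; in practice one either uses the scale of Banach spaces given by finitely many seminorms, or simply constructs $a_t$ by the explicit Duhamel iteration and checks convergence seminorm by seminorm. Second, your alternative route via the FIO property of $e^{-\frac{i}{\hbar}tQ}$ is indeed equivalent and no shortcut, as you note; the transport argument is the cleaner choice here.
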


We can use this result with the $\sqrt{\hbar}$-quantization, to get an Egorov Theorem for our mixed quantization $\Opt$.

\begin{theorem}
Let $P$ be a $h$-pseudodifferential operator on $\R^d$, and $\mathsf{a} \in \mathcal{C}^{\infty}_0(\R^{2d})$. Then
$$e^{\frac{i}{\hbar}\Opt(\mathsf{a})} P e^{-\frac{i}{\hbar}\Opt( \mathsf{a})}$$
is a $h$-pseudodifferential operator on $\R^d$.
\end{theorem}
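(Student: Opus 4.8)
The plan is to deduce this "mixed-quantization Egorov theorem" from the classical $h$-pseudodifferential Egorov theorem stated just above, using the dictionary $\Opt(\mathsf{a}) = \Oph(\mathfrak{a})$ established in Section~\ref{Chap4-sectionSecondFormalBNF}, where $\mathfrak{a}(h,y,\tilde\eta,t,\tilde\tau) = \mathsf{a}(h,y,h\tilde\eta,t,\tilde\tau)$. First I would observe that since $\mathsf{a} \in \Cinf_0(\R^{2d})$ is compactly supported, the associated symbol $\mathfrak{a}$ lies in a standard class $S(1)$ for the $h$-quantization, uniformly in $h$ (the substitution $\eta \mapsto h\tilde\eta$ only improves decay and does not spoil the derivative bounds, because each $\tilde\eta$-derivative brings a factor $h \le 1$). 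Likewise, writing $P = \Oph(\mathfrak{p})$ for its $h$-symbol $\mathfrak{p}$, with $P$ an $h$-pseudodifferential operator of some order $m'$, the hypotheses of the Egorov theorem quoted above are met with $\mathsf{a}$ playing the role of the generator $Q$.

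Then the key step is simply to apply that Egorov theorem: $e^{\frac{i}{h}\Oph(\mathfrak{a})} \Oph(\mathfrak{p}) e^{-\frac{i}{h}\Oph(\mathfrak{a})}$ is again an $h$-pseudodifferential operator, with symbol $\mathfrak{p}\circ\kappa + hS(m')$, where $\kappa$ is the time-$1$ Hamiltonian flow of $\mathfrak{a}$ in the $(y,\tilde\eta,t,\tilde\tau)$ variables. Note that $\frac{i}{h}\Opt(\mathsf{a}) = \frac{i}{h}\Oph(\mathfrak{a})$ by the very definition of $\Opt$, so the conjugation written with $\Opt(\mathsf{a})$ and the one written with $\Oph(\mathfrak{a})$ are literally the same operator; hence $e^{\frac{i}{h}\Opt(\mathsf{a})}Pe^{-\frac{i}{h}\Opt(\mathsf{a})}$ is an $h$-pseudodifferential operator. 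Finally I would remark that an $h$-pseudodifferential operator is, by the same dictionary run backwards, also expressible in the $\Opt$ form, so the statement is consistent with viewing the conjugated operator inside the mixed calculus; this is really just unwinding the identification $\Opt(\mathsf{b}) = \Oph(\mathfrak{b})$ once more. One should also check that $e^{\pm \frac{i}{h}\Opt(\mathsf{a})}$ is a well-defined bounded (in fact unitary, if $\mathsf{a}$ is real) operator on $\Ld$; this follows from $\Opt(\mathsf{a}) = \Oph(\mathfrak{a})$ being bounded and self-adjoint together with Stone's theorem.

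The only genuinely delicate point — the main obstacle — is the uniformity in $h$ of the symbol classes under the substitution $\eta = h\tilde\eta$. One must be a little careful that the $h$-symbol $\mathfrak{a}$, and more importantly the symbol $\mathfrak{p}$ of the given operator $P$, really do belong to $S(m')$ with $h$-uniform seminorms, and that the order function constraints $m' = \grandO(1)$, $mm' = \grandO'(1)$ of the quoted Egorov theorem are satisfied in the application (here $m=1$ suffices since $\mathsf{a}$ is compactly supported, so $mm' = m'$ is bounded as required). Once this bookkeeping is in place, the result is immediate. I would therefore present the proof compactly: record the identity $\frac{i}{h}\Opt(\mathsf{a}) = \frac{i}{h}\Oph(\mathfrak{a})$ with $\mathfrak{a}(h,y,\tilde\eta,t,\tilde\tau)=\mathsf{a}(h,y,h\tilde\eta,t,\tilde\tau)\in S(1)$, invoke the preceding Egorov theorem for the $h$-calculus, and conclude.
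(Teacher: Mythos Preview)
Your proposal is correct and follows exactly the paper's own (two-sentence) argument: observe that $\Opt(\mathsf{a}) = \Oph(\mathfrak{a})$ is an $h$-pseudodifferential operator with symbol in $S(1)$, then invoke the standard Egorov theorem in the $h$-calculus. One small remark: you silently replaced the exponent $\frac{i}{\hbar}$ by $\frac{i}{h}$; this is almost certainly the intended reading (the body of the paper, Section~\ref{Chap4-sec4-3}, uses $e^{ih^{-1}\Opt(\mathsf{a})}$), and it is the version for which the direct application of the $h$-Egorov theorem goes through.
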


\begin{proof}
$\Opt(\mathsf{a})$ is a $h$-pseudodifferential operator. Thus, we can apply the Egorov Theorem, and we deduce that 
$e^{\frac{i}{\hbar}\Opt(\mathsf{a})} P e^{-\frac{i}{\hbar}\Opt( \mathsf{a})}$ is a $h$-pseudodifferential operator on $\R^d$.
\end{proof}

\bibliographystyle{plain}
\bibliography{biblio-these.bib}

\end{document}